\documentclass[11pt]{amsart}
\usepackage{amssymb,amsmath,amsthm,mathrsfs}

\usepackage{a4wide}

\usepackage{enumerate,paralist}


\numberwithin{equation}{section}
\newtheorem{theorem}{Theorem}[section]
\newtheorem{proposition}[theorem]{Proposition}
\newtheorem{lemma}[theorem]{Lemma}
\newtheorem{corollary}[theorem]{Corollary}
\theoremstyle{definition}

\newtheorem{remark}[theorem]{Remark}


\def\SL{\mathrm{SL}(2,\mathbb{C})}
\def\SU{\mathrm{SU}(2)}



\begin{document}

\title[Reidemeister torsion, complex volume, and Zograf infinite product]
{Reidemeister torsion, complex volume, and Zograf infinite product}

\author{Jinsung Park}
\address{School of Mathematics\\ Korea Institute for Advanced Study\\
207-43\\ Hoegiro 85\\ Dong\-daemun-gu\\ Seoul 130-722\\
Korea }
\email{jinsung@kias.re.kr}

\thanks{2010 Mathematics Subject Classification
	57Q10, 32Q45, 58J28.}

\date{\today}

\begin{abstract}
In this paper, we prove an equality which involves Reidemeister torsion, complex volume, and  Zograf infinite product
for closed hyperbolic 3-manifolds.
\end{abstract}

\maketitle


\section{Introduction} \label{s:Introduction}

This paper can be considered as a continuation of a series of papers \cite{P05, P09, GP} of the author concerned on the invariants realized as special values of various dynamical zeta functions for hyperbolic manifolds. In this paper, we restrict our study to the case of closed hyperbolic 3-manifolds and obtain a specific result on a relationship of these invariants.

Although the main machinery to prove the result of this paper is the same as the line of previous works in \cite{P05, P09, GP}  for more general situations,
the motivation of this paper was given from works in a totally different subject, that is, the works of Zograf \cite{Z} and McIntyre-Takhtajan \cite{MT}. Since their works seem to be unrelated to the previous works \cite{P05, P09,GP} directly, it would be helpful for readers to explain how the author has been motivated and suggested by the works in \cite{Z, MT} to consider the problem resolved in this paper.

Let us consider the Teichm\"uller space $\mathfrak{T}_g$ of marked closed Riemann  surfaces of genus $g$.
Each marked closed Riemann surface in $\mathfrak{T}_g$ carries a unique hyperbolic metric.
By \cite{Z, MT}, the following equality holds over $\mathfrak{T}_g$,
\begin{equation}\label{e:ZMT}
\frac{\mathrm{det} \Delta_n}{\mathrm{det} N_n}= c_{g,n} \exp\left(-\frac{6n^2-6n+1}{12\pi}S\right) |F_n|^2.
\end{equation} 
Here $\mathrm{det}\,\Delta_n$ is the regularized determinant of the Laplacian $\Delta_n$ in the hyperbolic metric acting on the space of $n$-differentials, $N_n$ is the Gram matrix of the natural basis of the holomorphic $n$-differentials with respect to the inner product given by the hyperbolic metric, $S$ is the classical Liouville action defined in \cite{TZ}, and $c_{g,n}$ is a constant depending only on $g$ and $n$. The last term $F_n$ is the Zograf infinite product
which defines a harmonic function over $\mathfrak{T}_g$. Let us remark that the log of other two parts
are the K\"ahler potentials of the Weil-Petersson metric on $\mathfrak{T}_g$ respectively. 
The equality \eqref{e:ZMT} can be understood in several different view points. From the number theoretic view point, it can be considered as a higher genus generalization of Kronecker's first limit formula as explained in the introduction of \cite{MT}. It can be also
understood as a \emph{holomorphic factorization} of the determinant of the Laplacian recalling the holomorphic factorization theorem of \cite{BK86}.

All the terms in \eqref{e:ZMT} are defined in terms of structures over a marked closed Riemann surface, so that the equality \eqref{e:ZMT} seems to be unrelated to  higher dimensional hyperbolic manifolds. On the other hand, there are some ways to understand \eqref{e:ZMT} in terms of the corresponding quantities from a hyperbolic $3$-manifold. For a given marked closed Riemann surface, one can consider its Schottky uniformization. This 
defines a hyperbolic $3$-manifold $\mathcal{M}$ called \emph{Schottky hyperbolic} $3$-manifold, which bounds the original Riemann surface 
as a conformal boundary. Then the concerning geometric data defining $F_n$ is given by exactly complex lengths of the closed geodesics in $\mathcal{M}$.  Secondly, by the works of \cite{KS, TT03, PTT}, the classical Liouville action $S$ is related to the renormalized volume $\mathrm{Vol}_r$ of $\mathcal{M}$ by 
\begin{equation}\label{e:S-volume}
S=-4\mathrm{Vol}_r.
\end{equation}
 Here $\mathrm{Vol}_r$ is defined by a regularization process to obtain a finite value for hyperbolic manifold $\mathcal{M}$ of infinite volume.

The above view point in terms of a hyperbolic $3$-manifold suggested the author the existence of the corresponding formula for hyperbolic $3$-manifolds,
in particular, for closed hyperbolic $3$-manifolds. Here another new aspect comes in noticing that the hyperbolic volume always appears with the Chern-Simons invariant to define the following \emph{complex volume}
\begin{equation}\label{e:complex-volume}
\mathbb{V}(\mathcal{M})=\mathrm{Vol}(\mathcal{M}) +i 2\pi^2  \mathrm{CS}(\mathcal{M}) \qquad \text{mod} \quad \mathbb{C}/ i\pi^2 \mathbb{Z}
\end{equation}
for a hyperbolic manifold $\mathcal{M}$ of finite volume. Here $\mathrm{Vol}$ and $\mathrm{CS}$ denote respectively the hyperbolic volume and the Chern-Simons invariant
defined by the Levi-Civita connection for the hyperbolic metric. We refer to \cite{NZ, Y, Zi} for more extensive study on $\mathbb{V}$. 
Moreover, the term involving $F_n$ in \eqref{e:ZMT}  is just given by taking the modulus part for complex valued $F_n$. 
Hence, one can expect that the formula in question might exist as a relationship between complex valued invariants of a hyperbolic $3$-manifold.

The term on the left hand side of \eqref{e:ZMT} is the square inverse of the norm of a canonical section, which is defined by the Quillen metric for a determinant line bundle over $\mathfrak{T}_g$. This also suggests that the corresponding invariant would be related to a metric of the corresponding determinant line bundle for hyperbolic $3$-manifold $\mathcal{M}$.
In this view point, a natural candidate corresponding to the left hand side of
\eqref{e:ZMT} seems to be a complex valued Reidemeister torsion studied by  \cite{BH, BK, CM}. For a closed hyperbolic $3$-manifold $\mathcal{M}$ defined by a Kleinian group $\Gamma$,
let $\rho_{m}$ denote the $m$-th symmetric power of a lifting of the natural holonomy representation from $\Gamma$ to $\mathrm{SL}(2,\mathbb{C})$.
A candidate for the corresponding quantity seems to be 
\begin{equation}
\left(\frac{\mathcal{T}(\mathcal{M}, \rho_m)}{\mathcal{T}_0(\mathcal{M}, \rho_m )} \right)^{-2}.
\end{equation}
Here $\mathcal{T}(\mathcal{M}, \rho_m)$ is the Reidemeister torsion of $\mathcal{M}$ attached to $\rho_m$ and
$\mathcal{T}_0(\mathcal{M},\rho_m)$ is the torsion defined by a complex of spaces of the zero generalized eigensections of non-selfadjoint Hodge
Laplacians.
Indeed  we could prove the  following theorem 

\begin{theorem}\label{t:main-theorem}
For a closed hyperbolic $3$-manifold $\mathcal{M}_\Gamma$ defined by a Kleinian group $\Gamma$,
the following equality holds for $n\geq3$,
\begin{equation}\label{e:main-theorem}
\left(\frac{\mathcal{T}_0(\mathcal{M}_\Gamma, \rho_{2(n-1)})}{\mathcal{T}(\mathcal{M}_\Gamma, \rho_{2(n-1)} )} \right)^{12}=\,  \exp(6\pi i\theta_{2(n-1)}) \exp\left(\frac{2}{\pi} {\left(6n^2-6n+{1}\right)}\mathbb{V}(\mathcal{M}_\Gamma)\right) F_n^{\, 12}.
\end{equation}
Here $\theta_{2(n-1)}\in\mathbb{R}$ is given by a linear combination of eta invariants.  
\end{theorem}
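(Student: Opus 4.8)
The plan is to realize the right-hand side of \eqref{e:main-theorem} as a special value of a suitable dynamical zeta function and then to re-evaluate that same value through the spectral side. I would first introduce the twisted Ruelle (equivalently, Selberg) zeta function $R_{\rho_m}(s)$ of $\mathcal{M}_\Gamma$ attached to $\rho_m=\mathrm{Sym}^m$ of the lifted holonomy, with $m=2(n-1)$. Since $\mathcal{M}_\Gamma=\Gamma\backslash\mathbb{H}^3$ is closed, $R_{\rho_m}(s)$ is an Euler product over primitive closed geodesics whose local factors are built from the complex lengths $L_\gamma=\ell_\gamma+\sqrt{-1}\,\theta_\gamma$ and the eigenvalues $e^{\pm L_\gamma/2}$ of the holonomy; because $\rho_m$ has weights $j\in\{-(n-1),\dots,n-1\}$, the eigenvalues of $\rho_m(\gamma)$ are exactly $e^{jL_\gamma}$. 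This is precisely the complex-length data out of which the Zograf product $F_n$ is assembled, so the first task is to show that $F_n$ equals $R_{\rho_{2(n-1)}}(s_0)$ at a distinguished point $s_0$ (up to an explicit factor that I carry along), mirroring the McIntyre--Takhtajan identification of $F_n$ with a Selberg-zeta value in the Schottky case.

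The second step is to pass from the zeta value to torsion. I would invoke the Fried-type theorem underlying \cite{P05,P09,GP}: for an odd-dimensional closed hyperbolic manifold, the leading term of the twisted Ruelle zeta at the relevant point is the analytic (Ray--Singer) torsion, up to an exponential factor whose exponent is computed from the Plancherel and heat coefficients of $\rho_m$. This is where the polynomial $6n^2-6n+1$ should appear: integrating the Plancherel density against the weights $\{e^{jL_\gamma}\}_{j=-(n-1)}^{n-1}$ produces a quadratic in $n$ that I expect to be exactly $6n^2-6n+1$, matching the central-charge coefficient in \eqref{e:ZMT}. Because $\rho_m$ is a symmetric power of a genuinely $\SL$-valued (non-unitary) representation, the associated Hodge Laplacians are non-selfadjoint, the analytic torsion is complex valued, and its modulus and phase must be handled together; the relevant complex is that of zero generalized eigensections, whose torsion is $\mathcal{T}_0(\mathcal{M}_\Gamma,\rho_m)$. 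The complex Cheeger--M\"uller theorem of \cite{BH,BK,CM} then identifies this complex analytic torsion with the ratio $\mathcal{T}(\mathcal{M}_\Gamma,\rho_m)/\mathcal{T}_0(\mathcal{M}_\Gamma,\rho_m)$, up to a phase given by a linear combination of eta invariants, which I package into $\theta_{2(n-1)}$.

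The third step is to extract the complex volume. Raising to the twelfth power clears the $\tfrac{1}{12}$-type rational factors in the anomaly and lets me state an honest equality rather than one modulo roots of unity. The real part $\mathrm{Vol}(\mathcal{M}_\Gamma)$ of $\mathbb{V}(\mathcal{M}_\Gamma)$ enters through the volume term of the heat trace (the closed-manifold analog of the relation $S=-4\mathrm{Vol}_r$ in \eqref{e:S-volume}), while the imaginary part, the Chern--Simons invariant, must be produced from the phase of the complex analytic torsion via the functional equation of $R_{\rho_m}(s)$ together with the Atiyah--Patodi--Singer eta contributions. Organizing the volume coefficient as $\tfrac{2}{\pi}(6n^2-6n+1)$ and checking that the normalization of the phase matches the $i2\pi^2\mathrm{CS}$ in \eqref{e:complex-volume}, modulo the $\mathbb{C}/i\pi^2\mathbb{Z}$ indeterminacy, completes the matching of all three factors.

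I expect the main obstacle to be the imaginary (Chern--Simons) part. The real part and the identification of $F_n$ with a zeta value are essentially bookkeeping with the Plancherel measure and the Euler product, but pinning down the phase of the non-selfadjoint analytic torsion---reconciling the eta-invariant term of the complex Cheeger--M\"uller theorem with the Chern--Simons part of $\mathbb{V}$, and verifying that the residual phase is a well-defined real number $\theta_{2(n-1)}$ after the twelfth power---is the delicate point. For this I would need the full strength of the complex-valued torsion and its functional equation, which is exactly the machinery developed for the more general settings of \cite{P05,P09,GP}.
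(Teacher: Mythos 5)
Your overall architecture matches the paper's: relate the Reidemeister torsion to a Ruelle zeta value via a Fried-type theorem for the non-selfadjoint Laplacians together with the Cappell--Miller theorem \eqref{e:Cap-Mil}, produce the volume and eta terms from a functional equation, and convert eta to Chern--Simons by Atiyah--Patodi--Singer, raising to the twelfth power to absorb the ambiguities. However, your first step contains a genuine misidentification. The Zograf factor $F_n$ is \emph{not} a special value of the single Ruelle zeta function $R_{\rho_{2(n-1)}}$: by definition \eqref{e:def-Fs} it is an infinite product over the whole family of characters $\sigma_{-2k}$, $k\geq n$, and the mechanism that makes it computable is the telescoping identity \eqref{e:F-exp}, which writes $F_n(s)$ as a ratio of \emph{two Selberg} zeta functions, $Z(\sigma_{-2n},s+n)/Z(\sigma_{-2(n-1)},s+n+1)$. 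The object that actually admits a closed evaluation is the \emph{product} $F_n(s)^2R_{\rho_{2(n-1)}}(s)$, which by Propositions \ref{p:ruelle-decomp1} and \ref{p:F-rel} becomes a ratio of four Selberg zeta functions \eqref{e:F-Ruelle-rel}; one then pairs each factor with its reflection, applies the functional equation \eqref{e:funct-Selberg-zeta}, and squares once more to kill the residual sign coming from possible zeros or poles of the Selberg factors at $s=0$ (Theorem \ref{t:F-rel}). Without this multiplicative combination your ``distinguished point $s_0$'' does not exist, and the polynomial $6n^2-6n+1$ arises from the cubic $\tfrac13 s^3-\tfrac{k^2}{4}s$ in \eqref{e:funct-Selberg-zeta} evaluated for $k=2n$ and $k=2(n-1)$, not from a direct Plancherel integration against the weights of $\rho_m$.

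A second, smaller misattribution: the eta invariants in $\theta_{2(n-1)}$ do not come from the complex Cheeger--M\"uller comparison. Since $H^*(\mathcal{M}_\Gamma,\rho_m)$ vanishes by Borel--Wallach, the Cappell--Miller identity $\mathcal{T}^2=T$ holds exactly, with no eta anomaly; the link $\mathcal{T}^2/\mathcal{T}_0^2=R_{\rho_m}(0)$ is obtained from the determinant identities \eqref{e:ruelle-torsion} after a careful cancellation of the zero generalized eigenvalues. All eta invariants enter through the phases $e^{i\pi\eta(D(\sigma_k))}$ in the functional equation \eqref{e:funct-Selberg-zeta}, and only afterwards is the single term $\eta(D(\sigma_2))$ traded for $\mathrm{CS}(\mathcal{M}_\Gamma)$ via \eqref{e:APS}; the remaining combination is what is packaged into $\theta_{2(n-1)}$. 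You would need to supply both the telescoping identity and this bookkeeping to turn your sketch into a proof.
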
 

Precise definitions of terms in \eqref{e:main-theorem} are given at \eqref{e:def-zero-torsion}, \eqref{e:def-Reide}, \eqref{e:cn-eta}, \eqref{e:def-complex-vol}, and \eqref{e:def-F-final} respectively.
The term $F_n$ in \cite{MT} contains an additional normalization factor.

\begin{remark}\label{r:main-rmk} 
One can see that the formula \eqref{e:main-theorem} is exactly compatible  with \eqref{e:ZMT} taking modulus of both sides
of \eqref{e:main-theorem} by \eqref{e:S-volume} and \eqref{e:complex-volume}.  The extra power by $6$ in \eqref{e:main-theorem} is due to ambiguity
of the imaginary part of $\mathbb{V}$ up to $i\pi^2\mathbb{Z}$. In order to define a lifting of the holonomy representation of $\Gamma$ to $\mathrm{SL}(2,\mathbb{C})$, we need to use a spin structure of $\mathcal{M}_\Gamma$, but  an even symmetric power representation $\rho_{2(n-1)}$ is independent of this choice of the spin structure. Hence, the equality \eqref{e:main-theorem} does not depend on any 
choice of a spin structure of $\mathcal{M}_\Gamma$.
A similar equality for $\rho_{2n-1}$ with $n\geq 3$ also holds as given in \eqref{e:main-odd-G} where a choice of spin structure is involved.
\end{remark}

It seems to be interesting problem to extend \eqref{e:main-theorem} to a noncompact hyperbolic $3$-manifold. For a hyperbolic $3$-manifold with cusps, 
combining the works in \cite{P05, P09, GP, Zi} one can try to obtain a generalization of \eqref{e:main-theorem}  possibly with a new contribution from cusps.
Such a generalization would be related to the study of \emph{volume conjecture}  \cite{K,MM01} where the hyperbolic $3$-manifold with a cusp is given by the knot complement  from $S^3$.
Note that  the complex volume and the Reidemeister torsion are
two leading parts of a certain expansion derived from the $N$-colored Jones polynomial as $N\to\infty$. 
Another possible direction of generalization of \eqref{e:main-theorem} is the case of a convex cocompact hyperbolic $3$-manifold with infinite volume.
As mentioned before, this is closely related to the study of renormalized volume \cite{KS, TT03, PTT} for this case. A related formula with Bergman tau function is proved in \cite{MP} for a Schottky hyperbolic $3$-manifold.

Here is a structure of this paper. In the section 2, we review some basic materials which are needed in the next sections. In the section 3, we introduce various zeta functions and prove a primitive version of Theorem \ref{t:main-theorem}.
In the section 4,  we prove formulae between invariants derived from the Selberg trace formulae and review the work of \cite{CM}. In the section 5, we prove the main theorem combining all the results proved in the previous sections.

\subsection*{Acknowledgements}
The work of the author was partially supported by SRC - Center for Geometry and its Applications - grant No.~2011-0030044.
He thanks R.~Wentworth for useful remarks which were helpful to improve the result of this paper.

\section{Basic Materials}

\subsection{Hyperbolic 3-space as a symmetric space}
Let $G=\SL$ and $K=\SU$ be a maximal compact subgroup of $G$. Recall that $G$ is a double cover of $\mathrm{PSL}(2,\mathbb{C})$ which 
is the isometry group of the hyperbolic 3-space $\mathbb{H}^3$. The action of $\mathrm{PSL}(2,\mathbb{C})$ is given by
\begin{equation}\label{e:gamma-action}
\begin{pmatrix} a & b\\ c & d \end{pmatrix} q = \frac{aq +b}{cq+d}  
\end{equation}
where $\left(\small{\begin{smallmatrix} a & b\\ c & d \end{smallmatrix}}\right)\in \mathrm{PSL}(2,\mathbb{C})$ and $q=z+tj$ is the quoternion representation of a point $(z,t)\in \mathbb{H}^3$.
Therefore $G$ also acts on $\mathbb{H}^3$ and the isotropy subgroup of $(0,1)\in \mathbb{H}^3$ is $\SU$, hence $\mathbb{H}^3 \cong G/K$.
From now on, we use this realization of the hyperbolic 3-space as a symmetric space to apply some basic harmonic analysis over a symmetric space.

Let $G=NAK$ be the Iwasawa decomposition of $G$  where
\begin{equation}
N=\left\{ \begin{pmatrix} 1 & x+iy \\ 0 & 1 \end{pmatrix} \ | \ x,y\in \mathbb{R}  \right\}, \qquad A=\left\{ \begin{pmatrix} e^u & 0 \\ 0 &e^{-u} \end{pmatrix} \ | \ u \in\mathbb{R} \right\}.
\end{equation}
We assume that the Haar measures of  $N,A,K$ are given by $dn=dxdy, du, dk$ respectively where $dk$ has the total mass $1$.  
A Cartan subgroup $T$ of $G$
is given by $AM$  where
\begin{equation}
M=\left\{ \begin{pmatrix} e^{i\theta} & 0 \\ 0& e^{-i\theta} \end{pmatrix}
 \  | \  \theta\in [0,2\pi] \right\}.
\end{equation}
We take the Haar measure of $T$ to be $ dt=\frac{1}{2\pi} du d\theta$. The set of unitary characters of $M$ denoted by $\hat{M}$ is parametrized by 
$k\in\mathbb{Z}$. The character corresponding to $k$ is given by
\begin{equation}
\sigma_k \biggl( \begin{pmatrix} e^{i\theta} & 0 \\ 0& e^{-i\theta} \end{pmatrix} \biggr) = e^{ik\theta}.
\end{equation}

Let $\frak{g}$, $\frak{k}$, $\frak{n}$, $\frak{a}$, and $\frak{m}$ be the Lie algebras of $G,K,N, A$, and $M$ respectively. Let 
\begin{equation}
\frak{g}=\frak{k}\oplus \frak{p}
\end{equation}
be the Cartan decomposition of $\frak{g}$ given by the Cartan involution $\theta$ where $\mathfrak{k}$, $\mathfrak{p}$ are the $1,-1$ eigenspaces of 
$\theta$ respectively. Let $\alpha$ be the unique positive root of $(\frak{g},\frak{a})$. Let $H\in\frak{a}$ be such that
$\alpha(H)=1$. Let $\frak{a}^+\subset \frak{a}$ be the positive Weyl chamber and $A^+=\exp(\frak{a}^+)$. Put $\frak{h}=\frak{a}\oplus \frak{m}$. Then $\frak{h}$ is a Cartan subalgebra of $\frak{g}$.

The Cartan-Killing form $C$ is positive definite on $\frak{p}$ and is negative definite on $\frak{k}$. We may identify $\frak{p}$ with the tangent space
to $G/K$ at the identity coset. Then $C$ provides with an invariant metric on $G/K$. We use a normalized symmetric bilinear form defined by
\begin{equation}
C_0( X, Y ) =\frac14 C(X, Y ) \qquad \text{for \quad $X,Y\in\frak{g}$},
\end{equation}
so that the corresponding invariant metric has the constant curvature $-1$. Let $\{Z_i\}$ be an orthonormal basis for $\frak{k}$ with respect to $-C_0$ and
$\{Z_j\}$ be an orthonormal basis for $\frak{p}$ with respect to $C_0$. Then the normalized Casimir elements $\Omega$, $\Omega_K$ in the universal enveloping algebra of $\frak{g}_{\mathbb{C}}$, $\frak{k}_{\mathbb{C}}$ are given by
\begin{equation}
\Omega= -\sum_{i} Z_i^2 +\sum_j Z_j^2, \qquad \Omega_K= -\sum_{i} Z_i^2.
\end{equation}
For the right regular representation $R$ of $G$ on $C^{\infty}(G)$ defined by $R(g_2) f(g_1) = f(g_1g_2)$ for $g_1,g_2 \in G$ and $f\in C^\infty(G)$,
the normalized Casimir element induces a differential operator denoted by $R(\Omega)$.


Let $\Gamma$ be a cocompact torsion free discrete subgroup of $G$. Then $\mathcal{M}_\Gamma:=\Gamma\backslash \mathbb{H}^3$ is a compact hyperbolic manifold by definition and this is a special case of a locally symmetric space with a realization of the double coset space $\Gamma\backslash G/K$. Any compact hyperbolic 3-manifold has such a realization. 
Since we assume that $\Gamma$ is a discrete subgroup of $G=\mathrm{SL}(2,\mathbb{C})$ rather than 
$\mathrm{PSL}(2,\mathbb{C})$, the resulting manifold $\mathcal{M}_\Gamma$ is equipped with a spin structure.

For a nontrivial $\gamma\in \Gamma$, there exist $g\in G$,  $a_\gamma \in A^+$, and $m_\gamma\in M$ such that
\begin{equation}\label{e:gamma-conj}
g\gamma g^{-1}= a_\gamma m_\gamma.
\end{equation} 
It is known that 
$a_\gamma$ depends only on $\gamma$ and $m_\gamma$ is determined by $\gamma$ up to conjugacy in $M$ (see Lemma 6.6 of \cite{Wall}).
By definition, there exist $\ell_\gamma >0$ and $\theta_\gamma\in [0,2\pi]$ such that
\begin{equation}\label{e:hyp-a-m}
a_\gamma= \exp(\ell_\gamma H)=\begin{pmatrix} e^{\ell_\gamma/2} & 0 \\ 0 & e^{-\ell_\gamma/2} \end{pmatrix}, 
\qquad m_\gamma= \begin{pmatrix} e^{i\theta_\gamma/2} & 0 \\ 0 &  e^{-i\theta_\gamma/2} \end{pmatrix}.
\end{equation}
From \eqref{e:gamma-action}, it follows that $a_\gamma m_\gamma$ acts on $\mathbb{H}^3$ by $(z,t)\to (e^{\ell_\gamma+i\theta_\gamma}z, e^{\ell_\gamma}t)$.
The positive real number $\ell_\gamma$ is the length of the unique closed geodesic $C_\gamma$ in $\mathcal{M}_\Gamma$ that corresponds to the conjugacy class of $\gamma$ in $\Gamma$.  A closed geodesic $C_\gamma$ also  corresponds to a fixed point of the geodesic flow on the unit sphere bundle $\Gamma\backslash G/M$ over $\mathcal{M}_\Gamma\cong \Gamma\backslash G/K$. Its tangent bundle is given by $\Gamma\backslash G\times_M (\bar{\mathfrak{n}}\oplus \mathfrak{a}\oplus \mathfrak{n})$ where $\bar{\mathfrak{n}}=\theta(\mathfrak{n})$ and $M$ acts on $\bar{\mathfrak{n}}\oplus
\mathfrak{a}\oplus\mathfrak{n}$ by the adjoint action $\mathrm{Ad}$. The Poincar\'e map $P(C_\gamma)$ is the differential of the geodesic flow
at $C_\gamma$, which is given by $P(C_\gamma)=\mathrm{Ad}(a_\gamma m_\gamma)$ if $\gamma=a_\gamma m_\gamma$. 
Now we put
\begin{equation}\label{e:def-D-gamma}
\begin{split}
D(\gamma):=& \Big|  \mathrm{det} \left( \mathrm{Ad}(a_\gamma m_\gamma)\big|_{\bar{\mathfrak{n}}\oplus \mathfrak{n}}  - \mathrm{Id}  \right)   \Big|^{1/2}
=   e^{-\ell_\gamma} \Big|\mathrm{det} \left( \mathrm{Ad}(a_\gamma m_\gamma)\big|_{{\mathfrak{n}}}-\mathrm{Id} \right)\Big| \\
=& e^{\ell_\gamma} \mathrm{det} \left( \mathrm{Id}- \mathrm{Ad}(a_\gamma m_\gamma)\big|_{\bar{\mathfrak{n}}} \right)
=e^{\ell_\gamma} (1-e^{-(\ell_\gamma+i\theta_\gamma)}) (  1-e^{-(\ell_\gamma-i\theta_\gamma)}).
\end{split}
\end{equation}

A nontrivial $\gamma\in \Gamma$ is called \emph{primitive} if it can not be written as $\gamma=\gamma_0^k$ for some other $\gamma_0\in \Gamma$ and $k>0$.
For any nontrivial $\gamma\in \Gamma$, there exists a unique primitive element $\gamma_0\in\Gamma $  and $n_\gamma\in \mathbb{N}$ such that
$\gamma=\gamma_0^{n_\gamma}$.

\subsection{Bundles induced by representations}\label{ss:induced bundle}
By Proposition 2.2.3 of \cite{GW}, for an integer $m\geq 0$, there exists a unique (up to equivalence) irreducible representation 
\begin{equation}\label{e:def-rho}
\rho_m : G \to \mathrm{GL} (S^m(\mathbb{C}^2)),
\end{equation}
which is given by the $m$-th symmetric power of the standard representation of $G$ on $\mathbb{C}^2$. The restrictions of $\rho_m$ to $AM$ decomposes as follows:
\begin{equation}\label{e:decomp-rho}
\rho_m|_{AM} = {\bigoplus}^m_{k=0} \ e^{(\frac{m}{2}-k)\alpha} \otimes \sigma_{m-2k}.
\end{equation}

For a finite dimensional irreducible representation $(\chi, V_\chi)$ of $\Gamma$, we define a flat vector  bundle $E_\chi$ over $\mathcal{M}_\Gamma=\Gamma\backslash G/K$ by
\begin{equation}\label{e:def-flat-bundle}
E_\chi = \Gamma\backslash ( G/K \times V_\chi)
\end{equation}
where $\Gamma$ acts on $G/K\times V_\chi$  by $\gamma (gK, v)= (\gamma gK, \chi(\gamma)v)$. 
In this paper, we mainly use the restriction of the representation $\rho_m$ in \eqref{e:def-rho} to $\Gamma$ to define a flat vector bundle by
\eqref{e:def-flat-bundle}. Throughout this paper, we denote by $E_{\rho_m}$ the resulting flat vector bundle over $\mathcal{M}_\Gamma$.

For a finite dimensional irreducible representation $(\tau, V_\tau)$ of $K$, we also define a locally homogeneous vector bundle $E_\tau$ over $\mathcal{M}_\Gamma$ by
\begin{equation}\label{e:def-E-tau}
E_\tau= (\Gamma\backslash G \times V_\tau)/ K
\end{equation}
where $K$ acts on $\Gamma\backslash G\times V_\tau$ by $(\Gamma g,v) k = (\Gamma gk, \tau(k)^{-1} v)$.

For $m\geq 0$, we denote by $\tau_m$ the irreducible representation of $K$ given by the restriction of $\rho_m$ to $K$. By \eqref{e:decomp-rho},
\begin{equation}\label{e:decomp-tau}
\tau_m|_{M} = \bigoplus^m_{k=0} \sigma_{m-2k}.
\end{equation}
Let $R(K)$ and $R(M)$ denote the representation rings of $K$ and $M$ respectively. The inclusion $\imath:M \to K$ induces the restriction map $\imath^*: R(K)\to R(M)$.
By \eqref{e:decomp-tau}, 
\begin{equation}\label{e:ind-rest-map}
\begin{split}
&\imath^*(\tau_m-\tau_{m-2}) =\sigma_m+\sigma_{-m}, \qquad \text{for}\quad m\geq 2,\\
&\imath^*(\tau_1)= \sigma_1+\sigma_{-1}, \qquad  \imath^*(\tau_0) =\sigma_0.
\end{split}
\end{equation}
Note that $\tau_1$ is the \emph{spin representation} of $K$ and $\sigma_1,\sigma_{-1}$ are the \emph{half spin representations} of $M$. 

\subsection{Eta invariant and Chern-Simons invariant}\label{ss:eta}

The locally homogeneous vector bundle $E_{\tau_1}$ defined by the spin representation $\tau_1$ is equipped with the Dirac operator 
\begin{equation}\label{e:def-Dirac}
{D} s = \sum_{i=1}^3 c(e_i) \nabla_{e_i}^{\tau_1} s \qquad \text{for} \quad s\in C^\infty(\mathcal{M}_\Gamma, E_{\tau_1}) 
\end{equation}
where $c(e_i)$ denotes the Clifford multiplication of an local orthonormal frame $\{e_i\}$ of $T\mathcal{M}_\Gamma$ and $\nabla^{\tau_1}$ denotes the unique locally $G$-invariant connection of $E_{\tau_1}$.
Then canonically the vector bundle $E_{\tau_1}\otimes E_{\tau_{k-1}}$ is equipped with the Dirac operator defined in a similar way to \eqref{e:def-Dirac}
replacing $\nabla^{\tau_1}$ by $\nabla^{\tau_1} \otimes \mathrm{Id}+ \mathrm{Id}\otimes \nabla^{\tau_{k-1}}$ where $\nabla^{\tau_{k-1}}$ denotes the unique 
locally $G$-invariant
connection of $E_{\tau_{k-1}}$.
We denote by $D(\sigma_k)$ the resulting Dirac operator acting on $C^\infty(\mathcal{M}_\Gamma, E_{\tau_1\otimes \tau_{k-1}})$.  
The representations $\sigma_k$ and $\tau_{k-1}$ are related by
\begin{equation}
\sigma_k-\sigma_{-k} = (\sigma_1-\sigma_{-1}) \otimes i^*(\tau_{k-1})
\end{equation}
where $i^*:R(K)\to R(M)$. By the above construction, $D(\sigma_1)$ denotes the Dirac operator defined by a spin structure,
and $D(\sigma_2)$ denotes the odd signature operator. The Dirac operator $D(\sigma_k)$ is a first order selfadjoint differential operator with
spectrum consisting of real eigenvalues of finite multiplicities $\{\lambda_\ell\}_{\ell\in\mathbb{Z}}$.
The eta function $\eta(D(\sigma_k),s)$ is defined by
\begin{equation}
\eta(D(\sigma_k),s)= \sum_{\lambda_\ell>0} \lambda_\ell^{-s}  - \sum_{\lambda_\ell< 0} (-\lambda_\ell)^{-s} \qquad \text{for \quad $\mathrm{Re}(s)\gg 0$},
\end{equation}
which has a meromorphic extension to $\mathbb{C}$ and is regular at $s=0$.
The eta invariant of $D(\sigma_k)$ is defined by
\begin{equation}\label{e:def-eta}
\eta(D(\sigma_k))=\eta(D(\sigma_k),0).
\end{equation}
We refer to \cite{APSI, Gil}  for more details on the eta invariant.

As explained in the section 3 of \cite{Y},
the following $3$-form is  a left-invariant on $\mathrm{PSL}(2,\mathbb{C})$, which can be identified with the frame bundle $F(\mathbb{H}^3)$,
\begin{equation}
\begin{split} 
C=&\frac{1}{4\pi^2} \big( 4\theta_1\wedge \theta_2\wedge \theta_3 -d(\theta_1\wedge\theta_{23} +
\theta_2\wedge\theta_{31} +\theta_3\wedge\theta_{12}) \big)\\
  &\ \ +\frac{i}{4\pi^2} \big( \theta_{12}\wedge\theta_{13}\wedge \theta_{23} -\theta_{12}\wedge\theta_1\wedge\theta_2-\theta_{13}\wedge\theta_{1}\wedge\theta_3 -\theta_{23}\wedge\theta_2\wedge\theta_3 \big).
\end{split}
\end{equation} 
Here $\theta_i$, $\theta_{ij}$ denote the fundamental form and the connection form respectively of the Levi-Civita connection on $F(\mathbb{H}^3)$.
Since it is left-invariant, it descends to the frame bundle $F(\mathcal{M}_\Gamma)\cong \Gamma\backslash F(\mathbb{H}^3)$ where $\Gamma\subset \mathrm{SL}(2,\mathbb{C})$ acts on $F(\mathbb{H}^3)$ in an obvious way.
Now the complex volume of $\mathcal{M}_\Gamma$ is defined by
\begin{equation}\label{e:def-complex-vol} 
\frac{1}{\pi^2} \mathbb{V}(\mathcal{M}_\Gamma):=\frac{1}{\pi^2}\left(\mathrm{Vol}(\mathcal{M}_\Gamma)+i2\pi^2\mathrm{CS}(\mathcal{M}_\Gamma)\right) =
\int_{\mathcal{M}_\Gamma} s^*C \qquad \mathrm{mod} \quad i \mathbb{Z}.
\end{equation}
Here $s$ denotes a section from $\mathcal{M}_\Gamma$ to $F(\mathcal{M}_\Gamma)$ and
the ambiguity in the phase part of \eqref{e:def-complex-vol} by $i\mathbb{Z}$ is due to a choice of $s$. The Chern-Simons invariant has the following
relation with the eta invariant of the odd signature operator  $D(\sigma_2)$ over $\mathcal{M}_\Gamma$,
\begin{equation}\label{e:APS}
2\, CS(\mathcal{M}_\Gamma)= 3\, \eta(D(\sigma_2)) \qquad \mathrm{mod} \ \ \mathbb{Z}.
\end{equation}
Actually this equality holds for any closed Riemannian 3-manifold $\mathcal{M}$. We refer to \cite{APS} for more details about this formula.

\subsection{Hodge Laplacian}\label{ss:hodge} We begin with a general case of a Riemannian manifold.
Let $\mathcal{M}$ be an oriented  Riemannian manifold of dimension $n$. For the differential
$d:\Omega^{p-1}(\mathcal{M}) \to \Omega^p(\mathcal{M})$, its formal adjoint operator $d^*:\, \Omega^p(\mathcal{M}) \to \Omega^{p-1}(\mathcal{M})$ is defined by
\begin{equation}\label{e:adjoint-d}
d^*= (-1)^{np +n+1} \star d \star 
\end{equation} 
where $\star: \Omega^p(\mathcal{M}) \to \Omega^{n-p}(\mathcal{M})$ denotes the Hodge star operator with $\star^2= (-1)^{p(n-p)} \mathrm{Id}$ on 
$\Omega^p(\mathcal{M}_\Gamma)$. Then the Hodge Laplacian on $\Omega^p(\mathcal{M})$  is defined by
$\Delta_p=(d+d^*)^2$ .

For a flat vector bundle $E$ over $\mathcal{M}$, the above operators are extended as follows.  Let $U$ be an open subset in $\mathcal{M}$ where  $\wedge^{p-1}T^*\mathcal{M}$ and $E$ are trivial over $U$. Let $e_1,\cdots,e_r$ be a basis of flat sections of $E|_U$ where
$r$ is the rank of $E$.
Then any $\phi\in \Omega^{p-1}(U,E)$ can be written as 
\begin{equation}
\phi=\sum_{i=1}^r \phi_i\otimes e_i
\end{equation}
where $\phi_i\in \Omega^{p-1}(U)$. Now $d: \, \Omega^{p-1}(U,E) \to \Omega^p(U,E)$ is defined by
\begin{equation}\label{e:d-def-ext}
d (\phi) =\sum_{i=1}^r d\phi_i\otimes e_i.
\end{equation}
Note that this is well-defined since the flat vector bundle $E$ has a constant transition map. The operator in \eqref{e:adjoint-d} can be extended
to an operator $d^{*,\flat}:\, \Omega^p(\mathcal{M}, E) \to  \Omega^{p-1}(\mathcal{M},E) $ by
\begin{equation}\label{e:d-star-def-ext}
 d^{*,\flat}= (-1)^{np +n+1} (\star\otimes \mathrm{Id}_E)\, d \, (\star\otimes \mathrm{Id}_E) .
\end{equation} 
Here note that $(d^{*,\flat})^2=0$. 
Now an extension of the Hodge Laplacian $\Delta_p$ on  $\Omega^p(\mathcal{M},E)$ is defined by
\begin{equation}\label{e:def-lap-flat}
\Delta_p^{\flat}= (d+d^{*,\flat})^2
\end{equation} 
where $d$ and $d^{*,\flat}$ are defined in \eqref{e:d-def-ext} and \eqref{e:d-star-def-ext}.

Assuming a Hermitian metric $\langle \cdot, \cdot \rangle_E$ on $E$, we define the usual formal adjoint operator $d^*:\, \Omega^p(\mathcal{M}, E) \to  \Omega^{p-1}(\mathcal{M},E)$ extending the operator in \eqref{e:adjoint-d} by
\begin{equation}\label{e:adjoint-mu}
 d^{*}= (-1)^{np +n+1} (\star\otimes \mathrm{Id}_E)\mu^{-1} \, d \, \mu (\star\otimes \mathrm{Id}_E) .
\end{equation} 
Here $\mu: E\to E^*$ is the map defined by
\begin{equation}
\langle u,v \rangle_E = (u, \mu(v))
\end{equation}
where $(\cdot,\cdot)$ is the dual pairing. We refer  to the section 2 of \cite{MM} and the section 8 of \cite{CM} for more details of this construction. 
Now the usual
Hodge Laplacian on $\Omega^p(\mathcal{M}, E)$ is defined by
\begin{equation}
\Delta_p=(d+d^*)^2
\end{equation}
where $d$ and $d^{*}$ are defined in \eqref{e:d-def-ext} and \eqref{e:adjoint-mu}.

By the definition in \eqref{e:adjoint-mu},  $\Delta_p=\Delta_p^\flat$ when $E$ is unitarily flat, and 
for a non-unitary flat vector bundle $E$, the difference $d^*- d^{*,\flat}$ is a zero order operator. Hence, in general
$\Delta_p-\Delta_p^{\flat}$ is a first order differential operator on $\Omega^p(\mathcal{M}, E)$. For a Hermitian metric on $E$,
we can consider a $L^2$-completion of $\Omega^p(\mathcal{M}, E)$, which is denoted by $L^2(\Omega^p(\mathcal{M},E))$.
\begin{proposition}\label{p:spect-Delta}
The spectrum of non-selfadjoint operator $\Delta_p^\flat$ on $L^2(\Omega^p(\mathcal{M},E))$ is discrete and consists of generalized eigenvalues of finite multiplicities, which are contained in the set
$B_r \cup \lambda_\epsilon$
for some $r>0$ and $\epsilon>0$
where $B_r=\{z\in \mathbb{C}\, | \, |z|<r\}$ and $\Lambda_\epsilon=\{ re^{i\theta} \in\mathbb{C} \, | \, |\theta| \leq \epsilon\}$.
\end{proposition}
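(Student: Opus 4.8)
The plan is to treat $\Delta_p^\flat$ as a second-order elliptic operator whose leading symbol is scalar and positive, and to read off the spectral statement from parameter-dependent (Agmon) ellipticity together with the compactness of the resolvent on the closed manifold $\mathcal{M}$. First I would pin down the principal symbol. Since $d^2=0$ and $(d^{*,\flat})^2=0$ by the remark after \eqref{e:d-star-def-ext}, we have $\Delta_p^\flat=d\,d^{*,\flat}+d^{*,\flat}d$, and because $\star\otimes\mathrm{Id}_E$ is of order zero and acts fiberwise as an isometry, $d^{*,\flat}$ has the same principal symbol $-i\iota_\xi$ as the Riemannian codifferential. Hence the principal symbol of $\Delta_p^\flat$ is $(\xi\wedge)\iota_\xi+\iota_\xi(\xi\wedge)=|\xi|^2\,\mathrm{Id}_E$, exactly as for the selfadjoint $\Delta_p$; this is consistent with the observation preceding the statement that $\Delta_p-\Delta_p^\flat$ is only first order. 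In particular $\Delta_p^\flat$ is elliptic with positive scalar leading symbol.

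Next I would establish discreteness and finite multiplicity. The leading symbol $|\xi|^2-\lambda$ is invertible for every $\xi\neq0$ and every $\lambda\in\mathbb{C}\setminus[0,\infty)$, and also for $\xi=0$ whenever $\lambda\neq0$; thus any closed sector $\Lambda\subset\mathbb{C}$ with $\Lambda\cap[0,\infty)=\{0\}$ is a sector of parameter-ellipticity in the sense of Agmon--Seeley. By the standard construction of a parametrix with parameter (Shubin, Seeley) one obtains, for each such $\Lambda$, a constant $R_\Lambda>0$ so that $(\Delta_p^\flat-\lambda)^{-1}$ exists as a bounded operator on $L^2(\Omega^p(\mathcal{M},E))$ for all $\lambda\in\Lambda$ with $|\lambda|\geq R_\Lambda$, together with the estimate $\|(\Delta_p^\flat-\lambda)^{-1}\|\leq C/|\lambda|$. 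In particular the resolvent set is nonempty. Fixing any $\lambda_0$ in it, elliptic regularity gives that $(\Delta_p^\flat-\lambda_0)^{-1}$ maps $L^2$ into $H^2$, and since $\mathcal{M}$ is compact the Rellich embedding $H^2\hookrightarrow L^2$ is compact; therefore the resolvent is a compact operator. The analytic Fredholm theorem then forces the spectrum of $\Delta_p^\flat$ to be a discrete set of eigenvalues, each a pole of the resolvent of finite rank, i.e. with finite-dimensional generalized (root) eigenspace.

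Finally I would locate the spectrum. Given $\epsilon>0$, cover the angular region $\{\lambda:|\arg\lambda|\geq\epsilon\}$ by the two closed sectors $\Lambda^{\pm}=\{\lambda:\epsilon\leq\pm\arg\lambda\leq\pi\}$, each of which meets $[0,\infty)$ only at the origin and is therefore a sector of parameter-ellipticity. Putting $r=\max(R_{\Lambda^+},R_{\Lambda^-})$, the previous step shows that $(\Delta_p^\flat-\lambda)^{-1}$ exists whenever $|\arg\lambda|\geq\epsilon$ and $|\lambda|\geq r$. Consequently every spectral point must satisfy $|\arg\lambda|<\epsilon$ or $|\lambda|<r$, that is, the spectrum is contained in $\Lambda_\epsilon\cup B_r$, as claimed.

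I expect the main obstacle to be the second step: verifying the parameter-dependent resolvent estimate rigorously for the genuinely non-selfadjoint operator $\Delta_p^\flat$. Since $\Delta_p^\flat$ is not normal one cannot invoke the spectral theorem and must instead build the resolvent parametrix symbolically and control the remainder, checking that the zero- and first-order terms distinguishing $d^{*,\flat}$ from $d^*$ (which is what makes the operator non-selfadjoint) do not interfere with ellipticity with parameter. They do not, precisely because they are of strictly lower order than the leading term $|\xi|^2$, so the inversion of $|\xi|^2-\lambda$ still dominates on each admissible sector.
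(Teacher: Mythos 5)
Your argument is correct and is essentially the same route the paper takes: the paper simply cites Theorems 8.4 and 9.3 of Shubin, which are precisely the statements you reconstruct (discreteness and finite algebraic multiplicity via compactness of the resolvent for an elliptic operator on a closed manifold, plus localization of the spectrum via parameter-ellipticity on sectors avoiding $[0,\infty)$, using that $\Delta_p^\flat$ has scalar principal symbol $|\xi|^2$ because $d^{*,\flat}-d^*$ is of lower order). Nothing further is needed.
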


\begin{proof}
This follows from Theorem 8.4 and Theorem 9.3 of \cite{Sh}.
\end{proof}

 The above general construction applies to the case of the non-unitary vector bundle $E_\rho$ over a hyperbolic 3-manifold $\mathcal{M}_\Gamma$. 
In particular, we  have the operator $\Delta_p^{\flat}$ acting on $\Omega^p(\mathcal{M}_\Gamma,E_\rho)$.  The vector bundle $\wedge^p\, T^*\mathcal{M}_\Gamma$ can be given as a locally homogeneous vector bundle $E_\tau$ for the representation  $\tau= \wedge^p \mathrm{Ad}_{\frak{p}^*}$
of $K$. Hence,
\begin{equation}\label{e:ident-p-form-tilde}
\Omega^p(\tilde{\mathcal{M}_\Gamma}) \cong \left( C^\infty(G)\otimes \wedge^p \frak{p}^*  \right)^K
\end{equation}
where  $k\in K$ acts by $R(k)\otimes \wedge^p \mathrm{Ad}_{\frak{p}^*}(k)$. 
Let us denote by 
$\tilde{\Delta}_p^\flat$ the lifting of $\Delta_p^\flat$ on the universal covering space $\tilde{\mathcal{M}_\Gamma} \cong G/K$. 
With respect to \eqref{e:ident-p-form-tilde},  by Kuga's Lemma,
\begin{equation}\label{e:hodge-connection-tilde}
\tilde{\Delta}_{p}^\flat = - R(\Omega)\otimes\mathrm{Id}_{V_\rho}.
\end{equation}

\subsection{Heat kernel}
We denote by $L$ an elliptic second order differential operator acting on $C^\infty(\mathcal{M}_\Gamma, E_\tau\otimes E_\rho)$. We assume that
the spectrum  $\sigma(L)$ is discrete and consists of generalized eigenvalues of finite multiplicities and that 
$\sigma(L)$ lies in the set $B_{r} \cup \lambda_\epsilon \subset \mathbb{C}$ for some $r>0$ and $\epsilon>0$ as in Proposition 
\ref{p:spect-Delta}. 

For simplicity, first we assume that
$L$ has no zero generalized eigenvalue, that is, $0\notin \sigma(L)$. Under this condition, there exists an Agmon angle for $L$ and we can define
$L^{1/2}$ following the section 10 of \cite{Sh}. One can prove that the spectrum of ${L}^{1/2}$ lies in the subset with conditions  $\mathbb{C}$ with $\mathrm{Re}(\lambda)>\delta$ and $|\mathrm{Im}(\lambda)| < a $ for some $\delta>0$ and $a>0$. For an even test function
$\varphi(\lambda)=e^{-t\lambda^2}$  for $\lambda\in\mathbb{C}$ and $t>0$, we define $\varphi(L^{1/2})$  by 
\begin{equation}\label{e:def-phi-L12}
\varphi({L}^{1/2}):= \frac{i}{2\pi} \int_\Gamma \varphi(\lambda) \, \left({L}^{1/2} -\lambda\mathrm{Id}\, \right) ^{-1}\, d\lambda.
\end{equation}
Here $\Gamma$ is a counterclockwise oriented smooth curve given by $\Gamma_1\cup\Gamma_2\cup \Gamma_3$ where
\begin{equation*}
\Gamma_1=\{ \lambda\in \mathbb{C}\, | \,  \mathrm{Im}(\lambda)=a, \infty> \mathrm{Re} (\lambda) \geq \delta_1\}, \qquad 
\Gamma_3=\{ \lambda\in \mathbb{C}\, | \,  \mathrm{Im}(\lambda)=-a, \delta_1\leq  \mathrm{Re} (\lambda) <\infty \},
\end{equation*}
for some $\delta>\delta_1>0$ and $\Gamma_2\subset \{\lambda\in\mathbb{C}\, | \, \delta_1\leq  \mathrm{Re}(\lambda)< \delta \}$ is a simple curve connecting the finite boundary points of $\Gamma_1$ and  $\Gamma_3$.

The following construction also works for any even $\varphi$ in the space of Paley-Wiener functions, so we keep to denote our test function by $\varphi$ rather than the specific 
$\varphi(\lambda)=e^{-t\lambda^2}$.

For $f\in C^\infty(\mathcal{M}_\Gamma, E_\tau\otimes E_\rho)$, we can express $\varphi(L^{1/2})f$ 
in terms of the solution of the following wave equation
\begin{equation}\label{e:wave-eq}
\left(\frac{\partial^2 }{\partial t^2} + L\right) u =0, \qquad u(0,x)=f(x), \quad u_t(0,x)=0.
\end{equation}
By the construction in the sections IV-1 and IV-2 of \cite{Tay}, there exists a unique solution $u(t,f) \in C^\infty(\mathbb{R}\times\mathcal{M}, E_\tau\otimes E_\rho)$ of \eqref{e:wave-eq}. 
Now, by Proposition 3.2 of \cite{Mu11},
\begin{equation}\label{e:var-exp-wave}
\varphi(L^{1/2})f = \frac{1}{\sqrt{2\pi}} \int_\mathbb{R} \hat{\varphi}(t) u(t,f) \, dt 
\end{equation}
for $f\in C^\infty(\mathcal{M}_\Gamma, E_\tau\otimes E_\rho)$.

The above construction can be generalized when $L$ has a zero generalized eigenvalue. For this, following the section 2 of \cite{Mu11}, we put
\begin{equation}
\hat{L}=L(\mathrm{Id}-\Pi_0) \oplus \Pi_0
\end{equation}
where $\Pi_0$ denotes the orthogonal projection onto the generalized eigenspace $V_0$ such that there exists an integer $N_0$ with $L^{N_0} V_0=0$.
Since $\Pi_0$ is a smoothing operator, $\hat{L}$ is a pseudo-differential operator with the same symbol as $L$.  Moreover,
$\sigma(\hat{L})$ also lies in the same set $B_{r} \cup \lambda_\epsilon \subset \mathbb{C}$ and $0\notin \sigma(\hat{L})$, and $\hat{L}$ has an Agmon angle. Hence, we can define $\hat{L}^{1/2}$  as in the section 10 of \cite{Sh}. Let $V_1$ be the complementary subspace of $V_0$ which is invariant under $L$. We can also repeat the construction given in
\eqref{e:def-phi-L12} for $\hat{L}^{1/2}_1:=\hat{L}^{1/2}|_{V_1}$
 to define $\varphi(\hat{L}_1^{1/2})$.
To deal with the remaining part of $L$, putting $N:=L\Pi_0$ we define
\begin{equation}
\varphi(N^{1/2}):=\frac{1}{\sqrt{2\pi}} \int_{\mathbb{R}} \hat{\varphi}(t) \, U(t,N)\, dt
\end{equation}
where $U(t,N):= \sum_{k=0}^d \frac{(-1)^k t^{2k}}{(2k)!} N^{k}$ with $d=\mathrm{dim}(V_0)$. Then, combining these constructions, we define
\begin{equation}
\varphi(L^{1/2}):= \varphi(\hat{L}^{1/2}_1)(\mathrm{Id}-\Pi_0) +\varphi(N^{1/2})\Pi_0.
\end{equation}
By Proposition 3.2 of \cite{Mu11}, the expression as in \eqref{e:var-exp-wave} holds even when $0\in \sigma(L)$. 

\begin{proposition}\label{p:wave-kernel}
For $L=\Delta_p^\flat$ and  $\varphi(\lambda)=e^{-t\lambda^2}$, the heat
operator $e^{-tL}:=\varphi(L^{1/2})$ is of trace class operator with the smooth kernel
\begin{equation}\label{e:K-kernel}
K_\varphi(\Gamma g_1K, \Gamma g_2K) =\sum_{\gamma\in \Gamma} H_\varphi(g_1^{-1}\gamma g_2)\otimes \rho(\gamma)
\end{equation}
where $\Gamma g_1 K, \Gamma g_2K\in \mathcal{M}_\Gamma\cong \Gamma\backslash G/K$ and $H_\varphi: G\to \mathrm{End}(V_\tau)$ is a $C^\infty$-function which satisfies 
$H_\varphi(k_1g k_2) =\tau(k_1)\circ H_\varphi(g)\circ \tau(k_2)$ for $k_1,k_2\in K$.
\end{proposition}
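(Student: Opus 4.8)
The plan is to build the kernel first on the universal cover $\tilde{\mathcal{M}_\Gamma}\cong G/K$ and then push it down to $\mathcal{M}_\Gamma$ by the method of images. The key simplification is that, although $L=\Delta_p^\flat$ is non-selfadjoint on $\mathcal{M}_\Gamma$, its lift $\tilde{\Delta}_p^\flat$ is by Kuga's Lemma \eqref{e:hodge-connection-tilde} equal to $-R(\Omega)\otimes\mathrm{Id}_{V_\rho}$; since the Casimir $\Omega$ acts selfadjointly and $E_\rho$ lifts to the trivial bundle $G/K\times V_\rho$, the lifted operator $\tilde{L}$ is selfadjoint on $L^2$-sections over $G/K$. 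Thus the entire non-selfadjointness of $L$ is carried by the deck action twisted by the non-unitary representation $\rho$, and on the cover one may use the ordinary selfadjoint functional calculus.

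First I would construct $H_\varphi$. Using the identification \eqref{e:ident-p-form-tilde}, a $p$-form on $\tilde{\mathcal{M}_\Gamma}$ is a $K$-equivariant $\wedge^p\mathfrak{p}^*\cong V_\tau$-valued function on $G$. Since $\tilde{L}$ is $G$-invariant, so is the operator $\varphi(\tilde{L}^{1/2})$ defined via \eqref{e:def-phi-L12}, and its Schwartz kernel is therefore a convolution kernel depending only on $g_1^{-1}g_2$. Encoding the $K$-equivariance of sections, this kernel is given by a smooth $H_\varphi:G\to\mathrm{End}(V_\tau)$ with the biequivariance $H_\varphi(k_1 g k_2)=\tau(k_1)\circ H_\varphi(g)\circ\tau(k_2)$. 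That $H_\varphi$ is smooth and that $\varphi(\tilde{L}^{1/2})$ is represented by this kernel follows from the wave-equation representation \eqref{e:var-exp-wave}: for $\varphi$ in the Paley-Wiener class $\hat\varphi$ has compact support, so finite propagation speed for the wave equation \eqref{e:wave-eq} (whose principal part coincides with that of the metric Laplacian) forces $H_\varphi$ to be supported in a bounded neighborhood of $K$ in $G$, and the smooth dependence of $u(t,f)$ on its data gives smoothness of $H_\varphi$.

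Next I would descend to $\mathcal{M}_\Gamma=\Gamma\backslash G/K$. A section of $E_\tau\otimes E_\rho$ over $\mathcal{M}_\Gamma$ lifts to a $V_\tau\otimes V_\rho$-valued function on $G$ that is $\Gamma$-equivariant under $\rho$ and $K$-equivariant under $\tau$; averaging the cover kernel over the deck group and inserting the twisting factor $\rho(\gamma)$ demanded by $\Gamma$-equivariance yields exactly $\sum_{\gamma\in\Gamma}H_\varphi(g_1^{-1}\gamma g_2)\otimes\rho(\gamma)$, which descends to a well-defined kernel on $\mathcal{M}_\Gamma$ and reproduces $\varphi(L^{1/2})$. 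To pass from Paley-Wiener $\varphi$ to the Gaussian $\varphi(\lambda)=e^{-t\lambda^2}$, I would use the Gaussian off-diagonal decay $\|H_\varphi(g)\|\lesssim e^{-d(eK,gK)^2/4t}$ of the lifted heat kernel.

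The main obstacle is the convergence and smoothness of this twisted sum. Because $\rho$ is non-unitary, $\|\rho(\gamma)\|$ grows like $e^{c\,\ell_\gamma}$ with the translation length $\ell_\gamma$, and the number of $\gamma$ with $d(g_1K,\gamma g_2K)\le R$ grows exponentially in $R$ by cocompactness of $\Gamma$; both growths must be dominated simultaneously. The point is that $d(g_1K,\gamma g_2K)$ is, up to a bounded additive error, the displacement of $\gamma$, so the super-exponential Gaussian decay of $H_\varphi$ in the distance beats the combined exponential growth of $\|\rho(\gamma)\|$ and of the lattice-point count. This yields absolute, locally uniform convergence together with all derivatives, hence a smooth kernel; since $\mathcal{M}_\Gamma$ is compact, a smooth kernel is automatically trace class, which completes the proof.
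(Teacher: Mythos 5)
Your proposal is correct and follows essentially the same route as the paper: lift to $G/K$, use $G$-invariance and the wave-equation representation \eqref{e:var-exp-wave} with finite propagation speed to produce the bi-$\tau$-equivariant convolution kernel $H_\varphi$, and descend by the $\rho(\gamma)$-twisted sum over $\Gamma$. The only differences are of emphasis (the paper makes the identification on the cover explicit via $\tau$-spherical functions and delegates the trace-class property to Lemma 2.4 of \cite{Mu11}, while you spell out the Poincar\'e-series estimate), together with one small imprecision: the bound you actually need is $\|\rho(\gamma)\|\lesssim e^{c\, d(x_0,\gamma x_0)}$ in terms of the displacement rather than the translation length $\ell_\gamma$ (the operator norm of $\rho(\gamma)$ is not controlled by its spectral radius alone), which your Gaussian-domination argument handles equally well.
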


\begin{proof}
Basically we follow the construction given in the sections 3 and 4 of \cite{Mu11} with some modification since $\hat{\varphi}$ does not have a compact support for $\varphi(\lambda)=e^{-t\lambda^2}$. 

First, one can show that $\varphi(L^{1/2})$ is of trace class with a smooth kernel $K_\varphi$ by a standard argument as in Lemma 2.4 of \cite{Mu11}.
To derive the expression in \eqref{e:K-kernel}, we consider the liftings of $u(t,x,f)$ and $f$ satisfying \eqref{e:wave-eq} to $\tilde{\mathcal{M}}_\Gamma$, which we
denote by $\tilde{u}(t,\tilde{x}, f)$ and $\tilde{f}$ respectively. Then, for the operator
$\tilde{L}^\flat=\tilde{\Delta}_p^\flat$ over $\tilde{\mathcal{M}}_\Gamma$,  the lifted  solution $\tilde{u}(t,f)$ satisfies
\begin{equation}\label{e:wave-eq-lif}
\left(\frac{\partial^2}{\partial t^2} +\tilde{ L}^\flat\right) \tilde{u}(t,f) =0, \qquad \tilde{u}(0,f)=\tilde{f}, \quad \tilde{u}_t(0,f)=0.
\end{equation}
By the energy estimate given in the chapter 2 of \cite{Tay}, the solutions of $(\frac{\partial^2 }{\partial t^2} +\tilde{ L}^\flat){u}=0 $ have finite propagation speed. 
This implies that for every $\psi\in C^\infty(\tilde{\mathcal{M}}_\Gamma, \tilde{E}_\tau\otimes \tilde{E}_\rho)$, the wave equation
\begin{equation}\label{e:wave-eq-lif2}
\left(\frac{\partial^2 }{\partial t^2} +\tilde{ L}^\flat\right) {u}(t,\psi) =0, \qquad {u}(0,\psi)=\psi, \quad {u}_t(0,\psi)=0
\end{equation}
has a unique solution. Hence,
\begin{equation}\label{e:unique-sol}
\tilde{u}(t,f)= u(t,\tilde{f}).
\end{equation}

Since $\tilde{\mathcal{M}}_\Gamma\cong G/K$ with $G=\mathrm{SL}(2,\mathbb{C})$ and $K=\mathrm{SU}(2)$, we apply some harmonic analysis for $(G,K)$
to obtain more explicit expression of \eqref{e:unique-sol}.
First, recall that
$[\pi|_K:\tau] \leq 1$ for any $\tau\in\hat{K}$, $\pi\in \hat{G}$. For $\pi\in\hat{G}(\tau) =\{ \pi\in \hat{G}\, :\, [\pi|_K: \tau] =1 \}$,
the $\tau$-isotypical subspace $\mathcal{H}_\pi(\tau)$ of $\tau$ in $\mathcal{H}_\pi$ can be identified with $V_\tau$. 
Define $\tau$-spherical function $\Phi^\pi_\tau$ on $G$ by
\begin{equation}\label{e:e:def-tau-sph}
\Phi^\pi_\tau(g)=P_\tau \pi(g) P_\tau 
\end{equation}
for $g\in G$ where $P_\tau$ denotes the orthogonal projection of $\mathcal{H}_\pi$ onto
the $\tau$-isotypical subspace $\mathcal{H}_\pi(\tau)$.
Moreover, we have the following identification
\begin{equation}\label{e:hom-space}
C^\infty(G/K, \tilde{E}_\tau) \cong C^\infty(G;\tau)
\end{equation}
where
\begin{equation}\label{e:equiv-space}
C^\infty(G; \tau)=\left\{ f : C^\infty(G, V_\tau) \, | \, f(gk)=\tau(k^{-1})f(g), g\in G, k\in K\right\}.
\end{equation} 
Then, for $f^\pi_{\tau,v}:= \Phi^\pi_\tau(g^{-1})(v) \in C^\infty(G;\tau)$ with $v\in V_\tau$,  we define
\begin{equation}\label{e:L-action}
\tilde{L}\, f^\pi_{\tau,v} = -\pi(\Omega)  f^\pi_{\tau,v}.
\end{equation}
 Hence, the unique solution $u(t,x, f^\pi_{\tau,v})$ with $ u(0)= f^\pi_{\tau,v}$ of the corresponding equation with $\tilde{L}$ to \eqref{e:wave-eq-lif2}
is given by
\begin{equation}\label{e:wave-sol}
u(t,x,f^\pi_{\tau,v}) =\cos\left( t\sqrt{ -\pi(\Omega) }\right)\, f^\pi_{\tau,v}.
\end{equation}
This immediately implies
\begin{equation}\label{e:wave-action-pi}
\begin{split}
&\varphi\left(\tilde{L}^{1/2}\right) f^\pi_{\tau,v}= \varphi\left( \sqrt{ -\pi(\Omega) } \right) f^\pi_{\tau,v}\\
=&\frac{1}{\sqrt{2\pi}} \int_{\mathbb{R}} \hat{\varphi}(t)\, \cos\left(t\sqrt{ -\pi(\Omega) } \right) f^\pi_{\tau,v}  \, dt
= \frac{1}{\sqrt{2\pi}} \int_{\mathbb{R}} \hat{\varphi}(t)\, u(t,x,f^\pi_{\tau,v})  \, dt.
\end{split}
\end{equation}
By the same way as the proposition 2.2 of \cite{P09}, one can show that $\varphi(\tilde{L}^{1/2})$ has a smooth kernel
$\tilde{K}_\varphi\in C^\infty(\tilde{\mathcal{M}}_\Gamma\times \tilde{\mathcal{M}}_\Gamma, \mathrm{Hom}(\tilde{E}_\tau, \tilde{E}_\tau))$
such that for $\psi\in C^\infty(\tilde{\mathcal{M}}_\Gamma, \tilde{E}_\tau)$,
\begin{equation}
\frac{1}{\sqrt{2\pi}} \int_\mathbb{R} \hat{\varphi}(t) u(t, \tilde{x},\psi)\, dt = \int_{\tilde{\mathcal{M}}_\Gamma} \tilde{K}_\varphi(\tilde{x}, \tilde{y} ) \psi(\tilde{y})\, d\tilde{y}.
\end{equation}
Since $\varphi(\tilde{L}^{1/2})$ is a $G$-invariant integral operator, its kernel $\tilde{K}_\varphi$ satisfies
\begin{equation}\label{e:kernel-inv}
\tilde{K}_\varphi(g\tilde{x}, g\tilde{y}) =\tilde{K}_\varphi(\tilde{x},\tilde{y}), \qquad \text{for} \ \ g\in G.
\end{equation}
With respect to \eqref{e:hom-space}, the kernel $\tilde{K}_\varphi$ can be identified with a $C^\infty$-function
$H_\varphi:G\to \mathrm{End}(V_\tau)$,
which satisfies 
\begin{equation}\label{e:h-equiv}
H_\varphi(k_1g k_2) =\tau(k_1)\circ H_\varphi(g)\circ \tau(k_2) \qquad \text{for} \  \ k_1,k_2\in K .
\end{equation}
Then $\varphi(\tilde{L}^{1/2})$ acts by convolution
\begin{equation}
\left(\varphi(\tilde{L}^{1/2}) f \right)(g_1)= \int_G H_\varphi(g_1^{-1} g_2) f(g_2)\, dg_2.
\end{equation}
Recalling the relations of $\Delta_p^\flat$ with $\tilde{L}^\flat=\tilde{\Delta}^\flat_p$ and  with $\tilde{L}$, 
one can see that the kernel  $K_\varphi$ of $\varphi\left( L^{1/2}\right)$  has the form given in \eqref{e:K-kernel}.
This completes the proof.
\end{proof}

By Proposition \ref{p:wave-kernel}, applying the Lidskii's theorem
(see Theorem 8.4 of \cite{GK}), and the definition of an integral kernel, we have

\begin{corollary}
For $L=\Delta_p^\flat$ and  $\varphi(\lambda)=e^{-t\lambda^2}$, the following equalities hold
\begin{equation}\label{e:spectral-side}
\begin{split}
\sum_{\lambda\in \mathrm{spec}(L)} m(\lambda) e^{-t\lambda}=&\ \mathrm{Tr}\,\left( e^{-tL}\right) \\ 
=& \int_{\mathcal{M}_\Gamma} \mathrm{tr}\, K_{\varphi}(m,m)\, dm = \sum_{\gamma\in\Gamma} \mathrm{tr}\, \rho(\gamma) \int_{\Gamma\backslash G} \, h_\varphi(g^{-1}\gamma g)\, d{\dot{g}}
\end{split}
\end{equation}
where $m(\lambda)$ denotes the multiplicity of the generalized eigenvalue $\lambda$ of $L$  and $h_\varphi=\mathrm{tr}H_\varphi$. 
\end{corollary}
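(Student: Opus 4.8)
The plan is to read the displayed chain of equalities from left to right, combining the three tools named in the statement: Proposition \ref{p:wave-kernel} (trace-class property and explicit kernel), Lidskii's theorem, and the diagonal-integral formula for the trace of a smoothing kernel operator.

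First I would establish the leftmost equality $\sum_{\lambda} m(\lambda)e^{-t\lambda} = \mathrm{Tr}(e^{-tL})$. Since $L=\Delta_p^\flat$ is not selfadjoint, this is not the spectral theorem but an application of Lidskii's theorem (Theorem 8.4 of \cite{GK}): by Proposition \ref{p:wave-kernel} the operator $e^{-tL}=\varphi(L^{1/2})$ is of trace class, so its operator trace equals the sum of its eigenvalues counted with algebraic multiplicity. By Proposition \ref{p:spect-Delta} the spectrum of $L$ is discrete with finite algebraic multiplicities, and the holomorphic functional calculus defining $\varphi(L^{1/2})$ respects the Riesz projections onto the generalized eigenspaces; hence the spectral mapping $\lambda\mapsto e^{-t\lambda}$ carries the generalized eigenvalues of $L$ to those of $e^{-tL}$ with the same algebraic multiplicities $m(\lambda)$, which yields the first equality.

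Next I would obtain the middle equality $\mathrm{Tr}(e^{-tL})=\int_{\mathcal{M}_\Gamma}\mathrm{tr}\,K_\varphi(m,m)\,dm$ from the standard fact that a trace-class operator on $L^2(\Omega^p(\mathcal{M}_\Gamma,E_\rho))$ with continuous integral kernel has trace equal to the integral of the pointwise fiberwise trace of the kernel along the diagonal; the smoothness of $K_\varphi$ supplied by Proposition \ref{p:wave-kernel} makes the diagonal restriction and its integral well-defined. Finally I would unfold the geometric side: inserting the explicit kernel from \eqref{e:K-kernel} and using $\mathrm{tr}(A\otimes B)=\mathrm{tr}(A)\,\mathrm{tr}(B)$ gives $\mathrm{tr}\,K_\varphi(m,m)=\sum_{\gamma}h_\varphi(g^{-1}\gamma g)\,\mathrm{tr}\,\rho(\gamma)$ with $h_\varphi=\mathrm{tr}\,H_\varphi$. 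The $K$-biequivariance \eqref{e:h-equiv} forces $h_\varphi$ to be invariant under $K$-conjugation, so $g\mapsto h_\varphi(g^{-1}\gamma g)$ is right $K$-invariant and the integral over $\Gamma\backslash G/K$ coincides with the integral over $\Gamma\backslash G$ (as $dk$ has total mass $1$). Interchanging the absolutely convergent sum over $\gamma$ with the integral then produces $\sum_{\gamma}\mathrm{tr}\,\rho(\gamma)\int_{\Gamma\backslash G}h_\varphi(g^{-1}\gamma g)\,d\dot{g}$, the rightmost term.

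I expect the main obstacle to be the first step. Because $L$ is non-selfadjoint, identifying the operator trace of $e^{-tL}$ with the weighted sum $\sum m(\lambda)e^{-t\lambda}$ over generalized eigenvalues requires Lidskii's theorem together with the compatibility of the contour-integral functional calculus with the generalized eigenspace decomposition, rather than the usual heat-trace argument available in the selfadjoint setting. The interchange of summation and integration in the unfolding also needs the absolute convergence guaranteed by the trace-class estimates of Proposition \ref{p:wave-kernel}, but this is routine once the kernel is known to be smooth and summable.
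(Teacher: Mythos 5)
Your proposal is correct and follows essentially the same route as the paper, which derives the corollary directly from Proposition \ref{p:wave-kernel}, Lidskii's theorem (Theorem 8.4 of \cite{GK}), and the definition of an integral kernel, without writing out further details. Your elaboration of the spectral mapping for the non-selfadjoint functional calculus, the diagonal-integral formula for the trace, and the right-$K$-invariance of $g\mapsto h_\varphi(g^{-1}\gamma g)$ in the unfolding step simply fills in the standard steps the paper leaves implicit.
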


\subsection{Selberg trace formula}

\begin{proposition}
The following equalities hold
\begin{equation}\label{e:heat-trace0}
\begin{split}
\mathrm{Tr}(e^{-t(\Delta^\flat_0-1)})=\, & {\mathrm{dim}(V_\rho)\,\mathrm{Vol}(\mathcal{M}_\Gamma)} \frac{1}{4\pi^2} \int_{\mathbb{R}} \, \lambda^2\,
e^{-t\lambda^2}\, d\lambda\\
&+\sum_{[\gamma]\neq [e]} \frac{\ell_{\gamma_0}\,  \mathrm{tr}\,\rho(\gamma)}{ D(\gamma)} 
\frac{1}{\sqrt{4\pi t}}  e^{- \ell_\gamma^2/4t},
\end{split}
\end{equation}
\begin{equation}\label{e:heat-trace}
\begin{split}
\mathrm{Tr}(e^{-t\Delta^\flat_1}) - \mathrm{Tr}(e^{-t\Delta^\flat_0}) 
=\, &  {\mathrm{dim}(V_\rho)\,\mathrm{Vol}(\mathcal{M}_\Gamma)} \frac{1}{2\pi^2} \int_{\mathbb{R}} \, \left(\lambda^2+ 1\right)\, e^{-t\lambda^2} 
  \, d\lambda\\
&+ \sum_{[\gamma]\neq [e]} \frac{\ell_{\gamma_0}\,  \mathrm{tr}\,\rho(\gamma)}{ D(\gamma)} 
\left(e^{i\theta_\gamma}+ e^{-i\theta_\gamma} \right)\frac{1}{\sqrt{4\pi t}}  e^{- \ell_\gamma^2/4t} 
\end{split}
\end{equation}
where  $D(\gamma)=  e^{\ell_\gamma} |(1-e^{-(\ell_\gamma+i\theta_\gamma)})|^2$ defined in \eqref{e:def-D-gamma} and the sums on the right hand sides run over the set of conjugacy classes in $\Gamma$ of hyperbolic elements in $\Gamma$.
\end{proposition}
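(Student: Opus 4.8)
The plan is to feed the pre-trace formula \eqref{e:spectral-side} into $L=\Delta_0^\flat-1$ and into $L=\Delta_1^\flat,\ \Delta_0^\flat$, and to split the sum over $\gamma\in\Gamma$ into the identity class and the nontrivial classes, which for a torsion-free cocompact $\Gamma\subset\mathrm{SL}(2,\mathbb{C})$ are all loxodromic. Since $\mathrm{tr}\,\rho(e)=\dim(V_\rho)$ and $h_\varphi(g^{-1}eg)=h_\varphi(e)$ is constant, the identity class contributes $\dim(V_\rho)\,\mathrm{Vol}(\mathcal{M}_\Gamma)\,h_\varphi(e)$, while each nontrivial class contributes $\mathrm{tr}\,\rho(\gamma)$ times an orbital integral of $h_\varphi$. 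I would run the two equations in parallel, the first governed by the $K$-type $\tau_0$ of $0$-forms (after the shift by $\rho^2=1$) and the second by the virtual $K$-type $\tau_2-\tau_0$ attached to the difference of $1$- and $0$-forms, where by the identification $\wedge^1\mathfrak{p}^*\cong\tau_2$ the second equation is the natural object to compute.

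For the identity term I would evaluate $h_\varphi(e)$ by the $\tau$-spherical Plancherel theorem for $(G,K)=(\mathrm{SL}(2,\mathbb{C}),\mathrm{SU}(2))$, writing it as an integral over the unitary principal series $\pi_{\sigma_k,\lambda}$ with $[\pi_{\sigma_k,\lambda}|_K:\tau]=1$ of $\varphi(\sqrt{-\pi(\Omega)})$ against the Plancherel density. For $\tau_0$ only the spherical series $\sigma_0$ occurs, with $-\pi_{\sigma_0,\lambda}(\Omega)=\lambda^2+1$ and density $\frac{1}{4\pi^2}\lambda^2$, so after the shift by $1$ one gets $\frac{1}{4\pi^2}\int_\mathbb{R}\lambda^2 e^{-t\lambda^2}\,d\lambda$. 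For the difference I would invoke $\imath^*(\tau_2-\tau_0)=\sigma_2+\sigma_{-2}$ from \eqref{e:ind-rest-map}: the spherical series $\sigma_0$ enters $h^{(1)}_\varphi(e)$ and $h^{(0)}_\varphi(e)$ with the same Casimir eigenvalue $\lambda^2+1$ and density, hence cancels, leaving only $\sigma_{\pm2}$, each with $-\pi_{\sigma_{\pm2},\lambda}(\Omega)=\lambda^2$ and density $\frac{1}{4\pi^2}(\lambda^2+1)$; summing the two produces $\frac{1}{2\pi^2}\int_\mathbb{R}(\lambda^2+1)e^{-t\lambda^2}\,d\lambda$.

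For the hyperbolic terms I would use that $\gamma$ is loxodromic with centralizer $G_\gamma=AM$ and $\Gamma_\gamma=\langle\gamma_0\rangle$ for the primitive $\gamma_0$, and unfold the orbital integral as $\mathrm{Vol}(\langle\gamma_0\rangle\backslash AM)\int_{AM\backslash G}h_\varphi(g^{-1}a_\gamma m_\gamma g)\,d\dot{g}$, the centralizer volume being $\ell_{\gamma_0}$ in the chosen Haar measures. The remaining orbital integral I would evaluate through the wave-kernel representation \eqref{e:var-exp-wave}--\eqref{e:wave-sol} together with finite propagation speed and the elementary (sharp) form of the wave kernel on $\mathbb{H}^3$: this localizes the integral at geodesic distance $\ell_\gamma$, the Jacobian of the $AM\backslash G$ parametrization supplies the Weyl-type denominator $D(\gamma)^{-1}$ of \eqref{e:def-D-gamma}, the result is weighted by the $M$-character $\mathrm{tr}\,(\imath^*\tau)(m_\gamma)$, and $\frac{1}{\sqrt{2\pi}}\hat\varphi(\ell_\gamma)$ becomes $\frac{1}{\sqrt{4\pi t}}e^{-\ell_\gamma^2/4t}$ for $\varphi(\lambda)=e^{-t\lambda^2}$. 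For $\tau_0$ the character is $1$, yielding the first formula; for $\tau_2-\tau_0$ it is $\mathrm{tr}\,(\sigma_2+\sigma_{-2})(m_\gamma)=e^{i\theta_\gamma}+e^{-i\theta_\gamma}$ by \eqref{e:hyp-a-m}, yielding the second.

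The main obstacle is the exact evaluation of the loxodromic orbital integral with all constants correct, namely extracting the precise factor $\frac{\ell_{\gamma_0}}{D(\gamma)}\frac{1}{\sqrt{4\pi t}}e^{-\ell_\gamma^2/4t}$ and the $M$-character weight from the Harish-Chandra orbital integral (equivalently the Abel transform on $\mathbb{H}^3$), together with pinning down the Plancherel normalization $\frac{1}{4\pi^2}$ and the Casimir shift so that the two identity terms emerge exactly as stated.
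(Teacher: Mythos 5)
Your proposal follows essentially the same route as the paper: both feed the pre-trace formula \eqref{e:spectral-side} into the Selberg trace formula, use $\imath^*(\tau_2-\tau_0)=\sigma_2+\sigma_{-2}$ from \eqref{e:ind-rest-map} so that the identity contribution is computed from the Plancherel densities $\tfrac{1}{4\pi^2}\lambda^2$ and $\tfrac{1}{4\pi^2}(\lambda^2+1)$ with the Casimir eigenvalues $\lambda^2+1$ and $\lambda^2$, and evaluate the hyperbolic terms as unfolded orbital integrals weighted by $\ell_{\gamma_0}D(\gamma)^{-1}$ times the $M$-character $\sigma_0(m_\gamma)=1$, respectively $(\sigma_2+\sigma_{-2})(m_\gamma)=e^{i\theta_\gamma}+e^{-i\theta_\gamma}$, with the Gaussian transform $\tfrac{1}{2\pi}\int e^{-t\lambda^2}e^{-i\ell_\gamma\lambda}\,d\lambda=\tfrac{1}{\sqrt{4\pi t}}e^{-\ell_\gamma^2/4t}$. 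The paper delegates exactly the steps you flag as the main obstacle (the precise orbital-integral constants and Plancherel normalization) to Theorem 6.7 of Wallach and Section 4 of \cite{P09}, so your outline is a correct expansion of the same argument.
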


\begin{remark} The same formulae as \eqref{e:heat-trace0} and \eqref{e:heat-trace} are well known for the selfadjoint operators $\Delta_p$,  which can be derived from Theorem 6.7 of \cite{Wall} easily.
Note that we used the following equality
for the first terms on the right hand sides of \eqref{e:heat-trace0} and \eqref{e:heat-trace},
\begin{equation}\label{e:comp-volume}
\mathrm{Vol}(\mathcal{M}_\Gamma)= \pi \mathrm{Vol}(\Gamma\backslash G).
\end{equation}
This can be derived by the same way as the equation (4.31) of \cite{GP}. 
\end{remark}

\begin{proof}
Since the proof is essentially the same, we prove only the equality \eqref{e:heat-trace}.
From the right hand side of \eqref{e:spectral-side} with $\varphi(\lambda)=e^{-t\lambda^2}$ and using \eqref{e:ind-rest-map}, we can proceed as in the original way of Selberg to obtain the following equality in our context.
For more details about this we refer to the section 4 of \cite{P09}.
\begin{equation}\label{e:Selberg-trace-1}
\begin{split}
&\sum_{\lambda\in \mathrm{spec}(\Delta^\flat_1)} m(\lambda) e^{-t\lambda} - \sum_{\lambda\in \mathrm{spec}(\Delta^\flat_0)} m(\lambda) e^{-t\lambda} \\
=&\  {\mathrm{dim}(V_\rho)\,\mathrm{Vol}(\mathcal{M}_\Gamma)} \frac{1}{2\pi^2} \int_{\mathbb{R}} \, \Theta_{2,\lambda}(h_\varphi)
  \left(\lambda^2+ 1\right)\, d\lambda\\
&+\ \sum_{[\gamma]\neq [e]} \frac{\ell_{\gamma_0}\,  \mathrm{tr}\,\rho(\gamma)}{ D(\gamma)} 
\left( e^{i\theta_\gamma}+ e^{-i\theta_\gamma}\right)
\frac{1}{2\pi} \int_{\mathbb{R}}\, \Theta_{2,\lambda}(h_\varphi) \, e^{-i \ell_\gamma\lambda}\, d\lambda.
\end{split}
\end{equation}
Here we denote by $\Theta_{k,\lambda}$ the character of the induced representation $(\pi_{k,\lambda}, \mathcal{H}_{k,\lambda})$ where
\begin{equation}\label{e:indcued-rep}
\pi_{k,\lambda}=\mathrm{Ind}^G_{MAN}(\sigma_k\otimes e^{i\lambda}\otimes 1).
\end{equation}
By \eqref{e:wave-action-pi} and the equality $\pi_{2,\lambda}(\Omega)=-\lambda^2$, 
\begin{equation}\label{e:theta-varphi}
\Theta_{2,\lambda}(h_\varphi) = \varphi \left( \sqrt{ -\pi_{2,\lambda}(\Omega) } \right) = \varphi\left( \lambda \right) = e^{-t\lambda^2}.
\end{equation}
Here we used that $\varphi$ is even for th second equality.
Hence, the first term of the right hand side of \eqref{e:Selberg-trace-1} has the expression as in \eqref{e:heat-trace}.
For the second term of the right hand side of \eqref{e:Selberg-trace-1}, we use the following equality
\begin{equation}
\frac{1}{2\pi}\int^\infty_{-\infty} e^{-t\lambda^2} e^{-i \ell \lambda}\, d\lambda=\frac{1}{\sqrt{4\pi t}} e^{-\ell^2/4t}
\end{equation}
to obtain the expression as in \eqref{e:heat-trace}. This completes the proof of \eqref{e:heat-trace}.

\end{proof}


\section{Zeta functions} 

For a discrete subgroup $\Gamma$, the \emph{critical exponent} $\delta(\Gamma)$ is defined by
\begin{equation}
\delta(\Gamma)= \mathrm{inf} \left\{ s\ \big{\vert} \ \sum_{\gamma\in\Gamma} e^{-s \ell_\gamma} < \infty \right\}.
\end{equation}
It is known that $\delta(\Gamma)<2$ for a cocompact torsion free discrete subgroup $\Gamma$ (see Theorem 3.14.1 of \cite{Mar}). 
Hence, we have 
\begin{equation}\label{e:inf-prod}
 \prod_{[\gamma]_{\mathrm{p}}} \prod_{k=1}^\infty(1-e^{-sk\ell_\gamma}) < \infty \qquad \text{ for\ \  $\mathrm{Re}(s)>2$}.
\end{equation}
Here and from now on,  the product notation with $[\gamma]_{\mathrm{p}}$ always means that the product runs over  the set of conjugacy classes in $\Gamma$ of the \emph{primitive} hyperbolic elements in $\Gamma$. First we put
\begin{equation}
R(\sigma_k,s)= \prod_{{[\gamma]_{\mathrm{p}} }} \big(1 - e^{\frac{k}{2}(i\theta_\gamma)}\ e^{-s\, l_{\gamma}} \big)  \qquad \text{for\ \  $\mathrm{Re}(s) >2$}.
\end{equation}
It is well known that $R(\sigma_k,s)$ has a meromorphic extension over $\mathbb{C}$ (see \cite{BO}).   

For a representation $(\chi, V_\chi)$ of  $\Gamma$, 
the \emph{Ruelle zeta function} $R_\chi(s)$ attached to $\chi$ is defined by
\begin{equation}\label{e:def-ruelle}
R_\chi(s) = \prod_{{[\gamma]_{\mathrm{p}}}} \mathrm{det}\big(\mathrm{Id} - \chi(\gamma)\ e^{-s\, l_{\gamma}} \big) \qquad \ \text{for \ \ $\mathrm{Re} (s) \gg 0 $}.
\end{equation}
We are interested in the case $\chi=\rho|_\Gamma$ where $\rho=\rho_m$ is a representation of $G$. 
Now we have

\begin{proposition}\label{p:ruelle-dec-ruelle} For the restriction to $\Gamma$ of the representation $\rho_m$ of $G$, 
the Ruelle zeta function attached to $\rho_m$ has the following expression
\begin{equation}\label{e:ruelle-dec-ruelle}
R_{\rho_m}(s)= \prod_{l=0}^m R\left(\sigma_{{m}-2l}, s-\small{\frac{m}{2}}+l\right) \qquad \text{for \quad $\mathrm{Re} (s) >2+\frac{m}{2}$},
\end{equation}
and $R_{\rho_m}(s)$ has a meromorphic extension over $\mathbb{C}$.
\end{proposition}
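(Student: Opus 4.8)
The plan is to reduce the determinant in the definition \eqref{e:def-ruelle} of $R_{\rho_m}$ to a product of scalar factors by diagonalizing $\rho_m(\gamma)$ via the weight decomposition \eqref{e:decomp-rho}, and then to recognize each scalar factor as one of the elementary factors defining some $R(\sigma_{m-2l},\,\cdot\,)$.

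First I would fix a primitive hyperbolic class $[\gamma]_{\mathrm{p}}$ and use \eqref{e:gamma-conj}--\eqref{e:hyp-a-m} to write $g\gamma g^{-1}=a_\gamma m_\gamma$ with $a_\gamma=\exp(\ell_\gamma H)$ and $m_\gamma=\mathrm{diag}(e^{i\theta_\gamma/2},e^{-i\theta_\gamma/2})$. Since the determinant is invariant under conjugation, $\det(\mathrm{Id}-\rho_m(\gamma)e^{-s\ell_\gamma})=\det(\mathrm{Id}-\rho_m(a_\gamma m_\gamma)e^{-s\ell_\gamma})$. By \eqref{e:decomp-rho}, the restriction of $\rho_m$ to $AM$ splits $S^m(\mathbb{C}^2)$ into the one-dimensional weight spaces indexed by $l=0,\dots,m$, on which $a_\gamma m_\gamma$ acts by the scalar $e^{(\frac{m}{2}-l)\alpha}(a_\gamma)\,\sigma_{m-2l}(m_\gamma)$. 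Using $\alpha(H)=1$ and the definition of $\sigma_k$, this scalar equals $e^{(\frac{m}{2}-l)\ell_\gamma}e^{i(\frac{m}{2}-l)\theta_\gamma}=e^{(\frac{m}{2}-l)(\ell_\gamma+i\theta_\gamma)}$, so the determinant factorizes as $\prod_{l=0}^m\bigl(1-e^{i(\frac{m}{2}-l)\theta_\gamma}e^{-(s-\frac{m}{2}+l)\ell_\gamma}\bigr)$.

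The $l$-th factor above is exactly the factor indexed by $[\gamma]_{\mathrm{p}}$ in $R(\sigma_{m-2l},\,s-\frac{m}{2}+l)$, since $\tfrac{m-2l}{2}=\tfrac{m}{2}-l$ matches the exponent $\tfrac{k}{2}(i\theta_\gamma)$ with $k=m-2l$. Taking the product over all primitive hyperbolic classes and interchanging the (finite in $l$) product with the product over $[\gamma]_{\mathrm{p}}$ then yields \eqref{e:ruelle-dec-ruelle}; the rearrangement and the convergence of each factor are legitimate precisely where $\mathrm{Re}(s-\frac{m}{2}+l)>2$ for every $l\in\{0,\dots,m\}$, whose binding case $l=0$ gives the stated range $\mathrm{Re}(s)>2+\frac{m}{2}$, using $\delta(\Gamma)<2$ and \eqref{e:inf-prod}. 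Finally, the meromorphic continuation is immediate: each $R(\sigma_k,s)$ extends meromorphically over $\mathbb{C}$ (\cite{BO}), hence so does the finite product on the right of \eqref{e:ruelle-dec-ruelle}, and this product supplies the meromorphic extension of $R_{\rho_m}$. I expect no essential difficulty; the only delicate bookkeeping is tracking the half-angle $\theta_\gamma/2$ in $m_\gamma$ and the index shift $s\mapsto s-\frac{m}{2}+l$ consistently, so as to land on the normalization $e^{\frac{k}{2}(i\theta_\gamma)}$ used in $R(\sigma_k,s)$.
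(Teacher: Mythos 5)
Your proposal is correct and follows exactly the route the paper intends: the paper's proof simply states that \eqref{e:ruelle-dec-ruelle} ``follows from \eqref{e:gamma-conj} and \eqref{e:decomp-rho} easily,'' and your argument is precisely the conjugation-invariance-of-determinant plus weight-decomposition computation that this remark compresses, with the scalar $e^{(\frac{m}{2}-l)(\ell_\gamma+i\theta_\gamma)}$, the convergence range, and the meromorphic continuation all handled as the paper does. No gaps.
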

\begin{proof} The equality \eqref{e:ruelle-dec-ruelle} follows from \eqref{e:gamma-conj} and \eqref{e:decomp-rho} easily. Then, the meromorphic extension of $R_{\rho_m}(s)$
over $\mathbb{C}$
follows from the one of $R(\sigma_k,s)$.
\end{proof}

For a representation $(\chi, V_\chi)$ of  $\Gamma$,  the \emph{Selberg zeta function}
$Z_\chi(\sigma_k,s)$ attached to $\chi$ is defined by
\begin{equation*}
Z_\chi(\sigma_k, s) = \prod_{[\gamma]_{\mathrm{p}}} \prod_{p \geq 0,q\geq 0} \mathrm{det}\big(\mathrm{Id}- \chi(\gamma) e^{\frac{k}2i\theta_\gamma} e^{-p(l_\gamma+i\theta_\gamma)} e^{-q(l_\gamma-i\theta_\gamma)} e^{-s\, l_\gamma}\big) \qquad \ \text{for \ \ $\mathrm{Re} (s) \gg 0$}.
\end{equation*}
For the trivial $\chi$, we denote it by $Z(\sigma_k,s)$, and  the convergence 
of  $Z(\sigma_k,s)$ for $\mathrm{Re}(s)>2$ follows from  \eqref{e:inf-prod}.
It is also well known that $Z(\sigma_k,s)$ has a meromorphic extension over $\mathbb{C}$ (see \cite{BO, GP}). For $\chi=\rho_m$,
the Selberg zeta function $Z_{\rho_m}(\sigma_k,s)$ has the following expression by \eqref{e:gamma-conj} and \eqref{e:decomp-rho}
\begin{equation}\label{e:relation-Selberg-zeta}
Z_{\rho_m}(\sigma_k,s)= \prod^{m}_{l=0} Z\left(\sigma_{m-2l+k}, s-\frac{m}{2}+l\right) \qquad \text{for} \ \ \mathrm{Re}(s)> 2+\frac{m}{2},
\end{equation}
which has the meromorphic extension over $\mathbb{C}$.

In the following two propositions, we will derive two expressions for $R_{\rho_m}(s)$ in terms of the Selberg zeta functions. These two formulae and their relationship are the starting point for the proofs of main results of this paper. 

\begin{proposition}\label{p:ruelle-decomp1} The following equality holds over $\mathbb{C}$,
\begin{equation}\label{e:ruelle-decomp1}
R_{\rho_m}(s)=\frac{ Z(\sigma_m, s-\frac{m}{2}) \, Z(\sigma_{-m}, s+\frac{m}{2}+2)}
{Z(\sigma_{m+2}, s-\frac{m}{2}+1) \, Z(\sigma_{-(m+2)}, s+\frac{m}{2}+1)}.
\end{equation}
\end{proposition}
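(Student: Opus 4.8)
The plan is to reduce everything to the Euler factors attached to a single primitive class $[\gamma]_{\mathrm p}$ and to realize the right-hand side of \eqref{e:ruelle-decomp1} as a telescoping product of Selberg factors. All rearrangements will be carried out in the region $\mathrm{Re}(s)>2+\frac m2$, where by \eqref{e:inf-prod} all four Selberg products converge absolutely, and the resulting identity will then be propagated to all of $\mathbb C$ by meromorphic continuation.

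First I would record the general Selberg Euler factor in a convenient form. Collecting the real and the phase parts of the exponent, the factor of $Z(\sigma_k,s)$ attached to $[\gamma]_{\mathrm p}$ and to indices $p,q\ge 0$ is
\[
1-e^{(\frac k2+q-p)i\theta_\gamma}\,e^{-(s+p+q)\ell_\gamma}.
\]
The key elementary observation is that for any integer $a$ the $(p,q)$-factor of $Z(\sigma_{k+2a},s+a)$ equals the $(p,q+a)$-factor of $Z(\sigma_k,s)$; in other words the correlated shift $(\sigma_k,s)\mapsto(\sigma_{k+2a},s+a)$ merely translates the index $q$ by $a$ while fixing $p$. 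Applying this with $a=1$ to the first quotient, the $(p,q)$-factor of $Z(\sigma_{m+2},s-\frac m2+1)$ coincides with the $(p,q+1)$-factor of $Z(\sigma_m,s-\frac m2)$, so the denominator cancels precisely the factors of the numerator with $q\ge 1$ and only the ``$q=0$ row'' survives:
\[
\frac{Z(\sigma_m,s-\frac m2)}{Z(\sigma_{m+2},s-\frac m2+1)}=\prod_{[\gamma]_{\mathrm p}}\prod_{p\ge 0}\Bigl(1-e^{(\frac m2-p)i\theta_\gamma}e^{-(s-\frac m2+p)\ell_\gamma}\Bigr).
\]
Applying the same observation with $a=-1$ to the second quotient, the factors of $Z(\sigma_{-m},s+\frac m2+2)$ are exactly the factors of $Z(\sigma_{-(m+2)},s+\frac m2+1)$ with $q\ge 1$, so the entire numerator cancels and only the $q=0$ row of the denominator survives:
\[
\frac{Z(\sigma_{-m},s+\frac m2+2)}{Z(\sigma_{-(m+2)},s+\frac m2+1)}=\prod_{[\gamma]_{\mathrm p}}\prod_{N\ge 1}\Bigl(1-e^{(-\frac m2-N)i\theta_\gamma}e^{-(s+\frac m2+N)\ell_\gamma}\Bigr)^{-1}.
\]

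Finally I would multiply the two quotients and split the first product at $p=m$. The head $0\le p\le m$ is the finite product $\prod_{[\gamma]_{\mathrm p}}\prod_{l=0}^m(1-e^{(\frac m2-l)i\theta_\gamma}e^{-(s-\frac m2+l)\ell_\gamma})$, which by the definition of $R(\sigma_k,s)$ equals $\prod_{l=0}^m R(\sigma_{m-2l},s-\frac m2+l)$, and hence equals $R_{\rho_m}(s)$ by Proposition~\ref{p:ruelle-dec-ruelle} and \eqref{e:ruelle-dec-ruelle}. The tail $p\ge m+1$, after the substitution $p=N+m$, becomes exactly $\prod_{N\ge1}(1-e^{(-\frac m2-N)i\theta_\gamma}e^{-(s+\frac m2+N)\ell_\gamma})$, which is the reciprocal of the second quotient and so cancels it. This yields \eqref{e:ruelle-decomp1} for $\mathrm{Re}(s)>2+\frac m2$, and since both sides are meromorphic on $\mathbb C$ the equality extends to all of $\mathbb C$.

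I expect the only genuine delicacy to be the index bookkeeping in the two telescoping steps: one must keep the $q$-translation consistent so that exactly one boundary row of Euler factors survives each quotient, and then verify that the surviving infinite tail of the first quotient matches, term by term, the infinite denominator produced by the second. Convergence poses no real obstacle, since in $\mathrm{Re}(s)>2+\frac m2$ all of these manipulations take place among absolutely convergent products.
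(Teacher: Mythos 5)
Your proof is correct and follows essentially the same route as the paper: both arguments telescope the quotients $Z(\sigma_m,s-\frac m2)/Z(\sigma_{m+2},s-\frac m2+1)$ and $Z(\sigma_{-m},s+\frac m2+2)/Z(\sigma_{-(m+2)},s+\frac m2+1)$ down to a single surviving row of Euler factors, match the infinite tail of the first against the second, identify the remaining finite product with $\prod_{l=0}^m R(\sigma_{m-2l},s-\frac m2+l)=R_{\rho_m}(s)$ via Proposition~\ref{p:ruelle-dec-ruelle}, and extend by meromorphic continuation. Your formulation of the shift $(\sigma_k,s)\mapsto(\sigma_{k+2a},s+a)$ as a translation $q\mapsto q+a$ of the Euler-factor index is just a cleaner packaging of the paper's computations \eqref{e:ruelle-dec-proof1}--\eqref{e:ruelle-dec-proof4}.
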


\begin{proof}
Since the Selberg zeta function $Z(\sigma_k,s)$ attached to $\sigma_k$ has a meromorphic  extension over $\mathbb{C}$, it is sufficient to show the equality \eqref{e:ruelle-decomp1} over a domain where both sides converge absolutely. Over such a domain, we have
\begin{equation}\label{e:ruelle-dec-proof1}
\begin{split}
Z(\sigma_m, s-\frac{m}{2}) =& \prod_{[\gamma]_{\mathrm{p}}} \prod_{p \geq 0,q\geq 0} \big(1- e^{\frac{m}2i\theta_\gamma} e^{-p(l_\gamma+i\theta_\gamma)} e^{-q(l_\gamma-i\theta_\gamma)} e^{-(s-\frac{m}{2})\, l_\gamma}\big)\\
=&  \prod_{[\gamma]_{\mathrm{p}}} \prod_{p \geq 0,q\geq 0} \big(1- e^{\frac{m}2(\ell_\gamma+i\theta_\gamma)} e^{-p(l_\gamma+i\theta_\gamma)} e^{-q(l_\gamma-i\theta_\gamma)} e^{-s\, l_\gamma}\big).
\end{split}
\end{equation}
Similarly we have
\begin{equation}\label{e:ruelle-dec-proof2}
\begin{split}
Z(\sigma_{m+2}, s-\frac{m}{2}+1) =& \prod_{[\gamma]_{\mathrm{p}}} \prod_{p \geq 0,q\geq 0} \big(1- e^{(\frac{m}2+1)i\theta_\gamma} e^{-p(l_\gamma+i\theta_\gamma)} e^{-q(l_\gamma-i\theta_\gamma)} e^{-(s-\frac{m}{2}+1)\, l_\gamma}\big)\\
=&  \prod_{[\gamma]_{\mathrm{p}}} \prod_{p \geq 0,q\geq 1} \big(1- e^{\frac{m}2(\ell_\gamma+i\theta_\gamma)} e^{-p(l_\gamma+i\theta_\gamma)} e^{-q(l_\gamma-i\theta_\gamma)} e^{-s\, l_\gamma}\big).
\end{split}
\end{equation}
By \eqref{e:ruelle-dec-proof1} and \eqref{e:ruelle-dec-proof2},
\begin{equation}\label{e:ruelle-dec-proof3}
\frac{Z(\sigma_m, s-\frac{m}{2})}{Z(\sigma_{m+2}, s-\frac{m}{2}+1)} =\prod_{[\gamma]_{\mathrm{p}}} \prod_{p \geq 0} \big(1- e^{\frac{m}2(\ell_\gamma+i\theta_\gamma)} e^{-p(l_\gamma+i\theta_\gamma)}  e^{-s\, l_\gamma}\big).
\end{equation}
By the same way,
\begin{equation}\label{e:ruelle-dec-proof4}
\frac{Z(\sigma_{-m}, s+\frac{m}{2}+2)}{Z(\sigma_{-(m+2)}, s+\frac{m}{2}+1)} =\prod_{[\gamma]_{\mathrm{p}}} \prod_{p \geq 1} \big(1- e^{-\frac{m}2(\ell_\gamma+i\theta_\gamma)} e^{-p(l_\gamma+i\theta_\gamma)}  e^{-s\, l_\gamma}\big)^{-1}.
\end{equation}
By \eqref{e:ruelle-dec-proof3} and \eqref{e:ruelle-dec-proof4}, 
\begin{equation}
\begin{split}
\frac{ Z(\sigma_m, s-\frac{m}{2}) \, Z(\sigma_{-m}, s+\frac{m}{2}+2)}
{Z(\sigma_{m+2}, s-\frac{m}{2}+1) \, Z(\sigma_{-(m+2)}, s+\frac{m}{2}+1)}
=\ &\prod_{[\gamma]_{\mathrm{p}}} \prod_{k= 0}^m \big(1- e^{(\frac{m}2-k)(\ell_\gamma+i\theta_\gamma)}   e^{-s\, l_\gamma}\big)\\
=\ & \prod_{k=0}^m R\left(\sigma_{{m}-2k}, s-\small{\frac{m}{2}}+k\right).
\end{split}
\end{equation} 
Combining this and Proposition \ref{p:ruelle-dec-ruelle} completes the proof.
\end{proof}

\begin{proposition}\label{p:ruelle-decomp2}
The following equality holds over $\mathbb{C}$,
\begin{equation}\label{e:ruelle-decomp2}
R_{\rho_m}(s)= \frac{ Z_{\rho_m}(\sigma_0, s) \, Z_{\rho_m}(\sigma_0,s+2)}{Z_{\rho_m}(\sigma_2,s+1)\, Z_{\rho_m}(\sigma_{-2},s+1)}.
\end{equation}
\end{proposition}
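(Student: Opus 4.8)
The plan is to prove \eqref{e:ruelle-decomp2} by the same telescoping mechanism used in the proof of Proposition \ref{p:ruelle-decomp1}, now carried out inside the determinants attached to $\rho_m$. Since all four Selberg zeta functions and $R_{\rho_m}(s)$ have meromorphic extensions over $\mathbb{C}$ (by \eqref{e:relation-Selberg-zeta} and Proposition \ref{p:ruelle-dec-ruelle}), it suffices to verify the identity on a half-plane $\mathrm{Re}(s)\gg 0$ where every product converges absolutely; the general case then follows by analytic continuation. The key point is that the scalar factors $e^{\frac{k}{2}i\theta_\gamma}$, $e^{-p(\ell_\gamma+i\theta_\gamma)}$, $e^{-q(\ell_\gamma-i\theta_\gamma)}$ and $e^{-s\ell_\gamma}$ commute with $\rho_m(\gamma)$ and may be combined inside $\mathrm{det}\big(\mathrm{Id}-\rho_m(\gamma)\,\cdots\big)$ exactly as the scalars were combined in the scalar case.

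First I would establish the numerator-type identity
\[
\frac{Z_{\rho_m}(\sigma_0,s)}{Z_{\rho_m}(\sigma_2,s+1)}=\prod_{[\gamma]_{\mathrm{p}}}\prod_{p\geq 0}\mathrm{det}\big(\mathrm{Id}-\rho_m(\gamma)\,e^{-p(\ell_\gamma+i\theta_\gamma)}e^{-s\ell_\gamma}\big),
\]
by noting that the shift $\sigma_0\mapsto\sigma_2$ together with $s\mapsto s+1$ turns $e^{i\theta_\gamma}e^{-q(\ell_\gamma-i\theta_\gamma)}e^{-\ell_\gamma}$ into $e^{-(q+1)(\ell_\gamma-i\theta_\gamma)}$, so that the quotient telescopes in the index $q$ and leaves only the $q=0$ factors. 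Next I would establish the companion identity
\[
\frac{Z_{\rho_m}(\sigma_0,s+2)}{Z_{\rho_m}(\sigma_{-2},s+1)}=\prod_{[\gamma]_{\mathrm{p}}}\prod_{p\geq 1}\mathrm{det}\big(\mathrm{Id}-\rho_m(\gamma)\,e^{-p(\ell_\gamma+i\theta_\gamma)}e^{-s\ell_\gamma}\big)^{-1},
\]
where the passage from $(\sigma_{-2},s+1)$ raises the index $p$ by one via $e^{-i\theta_\gamma}e^{-\ell_\gamma}=e^{-(\ell_\gamma+i\theta_\gamma)}$, while the numerator $Z_{\rho_m}(\sigma_0,s+2)$ supplies an extra $e^{-2\ell_\gamma}=e^{-(\ell_\gamma+i\theta_\gamma)}e^{-(\ell_\gamma-i\theta_\gamma)}$ that raises both $p$ and $q$; after cancelling the common $p\geq 1,\ q\geq 1$ block, the surviving factors are exactly the inverse of the $q=0$, $p\geq 1$ product.

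Multiplying the two identities, the $p\geq 1$ factors cancel against their inverses and only the $p=0$ term $\prod_{[\gamma]_{\mathrm{p}}}\mathrm{det}\big(\mathrm{Id}-\rho_m(\gamma)e^{-s\ell_\gamma}\big)$ remains, which is precisely $R_{\rho_m}(s)$ by \eqref{e:def-ruelle}. This verifies \eqref{e:ruelle-decomp2} on $\mathrm{Re}(s)\gg 0$ and hence, by meromorphic continuation, over all of $\mathbb{C}$. I do not expect any genuine obstacle: the argument runs parallel to Proposition \ref{p:ruelle-decomp1}, and the only thing requiring care is the bookkeeping of the four index shifts and the resulting cancellation pattern. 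It is worth remarking that this computation uses nothing about $\rho_m$ beyond $\rho_m(\gamma)$ being a fixed invertible matrix, so the same identity $R_\chi(s)=Z_\chi(\sigma_0,s)\,Z_\chi(\sigma_0,s+2)\big/\big(Z_\chi(\sigma_2,s+1)\,Z_\chi(\sigma_{-2},s+1)\big)$ holds for an arbitrary finite-dimensional representation $\chi$ of $\Gamma$.
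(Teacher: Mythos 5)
Your argument is correct, but it is not the route the paper takes. The paper's proof is a two-line reduction: it uses the decomposition \eqref{e:relation-Selberg-zeta} of $Z_{\rho_m}(\sigma_k,s)$ into scalar Selberg zeta functions to telescope in the index $l$, obtaining $Z_{\rho_m}(\sigma_0,s)/Z_{\rho_m}(\sigma_{-2},s+1)=Z(\sigma_m,s-\frac m2)/Z(\sigma_{-(m+2)},s+\frac m2+1)$ and $Z_{\rho_m}(\sigma_0,s+2)/Z_{\rho_m}(\sigma_2,s+1)=Z(\sigma_{-m},s+\frac m2+2)/Z(\sigma_{m+2},s-\frac m2+1)$, and then simply quotes Proposition \ref{p:ruelle-decomp1}. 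You instead redo the telescoping directly inside the determinants $\mathrm{det}(\mathrm{Id}-\rho_m(\gamma)\cdots)$, pairing the four factors the other way ($\sigma_0$ at $s$ against $\sigma_2$ at $s+1$ to kill the $q$-index, and $\sigma_0$ at $s+2$ against $\sigma_{-2}$ at $s+1$ to kill the $p$-index); both pairings work, and your index bookkeeping checks out. What your route buys is independence from \eqref{e:relation-Selberg-zeta} and from Proposition \ref{p:ruelle-decomp1}: as you note, the identity only uses that $\chi(\gamma)$ is a fixed invertible matrix commuting with the scalar weights, so it holds for any finite-dimensional $\chi$ of $\Gamma$ for which the products converge on some half-plane (for general $\chi$ one should say a word about why such a half-plane exists; for $\rho_m|_\Gamma$ this is \eqref{e:inf-prod} together with \eqref{e:decomp-rho}). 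What the paper's route buys is brevity and an explicit link between \eqref{e:ruelle-decomp2} and \eqref{e:ruelle-decomp1}, which is the form actually used downstream in \eqref{e:ruelle-torsion}.
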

\begin{proof}
By \eqref{e:relation-Selberg-zeta}, we have
\begin{equation}\label{e:rat-Sel-2}
\frac{Z_{\rho_m}(\sigma_0,s)}{Z_{\rho_m}(\sigma_{-2}, s+1)} = \frac{Z(\sigma_m, s-\frac{m}{2})}{Z(\sigma_{-(m+2)}, s+\frac{m}{2}+1)},
\end{equation}
and
\begin{equation}\label{e:rat-Sel-1}
\frac{Z_{\rho_m}(\sigma_0, s+2)}{Z_{\rho_m}(\sigma_2, s+1)} = \frac{Z(\sigma_{-m}, s+\frac{m}{2}+2)}{Z(\sigma_{m+2}, s-\frac{m}{2} +1)}.
\end{equation}
Combining \eqref{e:rat-Sel-2}, \eqref{e:rat-Sel-1} and \eqref{e:ruelle-decomp1} completes the proof.
\end{proof}

Now let us introduce one of the main ingredients of this paper. This is defined by the infinite products of the Ruelle zeta functions attached to 
$\sigma_{-m}$ for even or odd integers $m\in\mathbb{N}$:
\begin{align}
F_n(s) =& \prod_{k=n}^\infty R(\sigma_{-2k}, s+k) \qquad \qquad \qquad \, \text{for \ \ $\mathrm{Re} (s) > 2-n$, \qquad $n\in \mathbb{N}$  },\label{e:def-Fs}\\
G_n(s)=& \prod_{k=n}^\infty R(\sigma_{-(2k+1)}, s+k+\frac12) \qquad \ \ \text{for \ \   $\mathrm{Re} (s) > \frac 32-n$, \qquad $n\in\mathbb{N}\cup\{0\}$ }.\label{e:def-Gs}
\end{align}
The convergences of $F_n(s)$, $G_n(s)$ over each half planes follow from \eqref{e:inf-prod}.
Note that a spin structure is
involved in the construction of $G_n(s)$.
The relations of $F_n(s)$, $G_n(s)$ with other zeta functions are given as follows.

\begin{proposition}\label{p:F-rel} The functions $F_n(s)$, $G_n(s)$ have meromorphic extensions over $\mathbb{C}$ and they satisfy the following relations with Selberg
zeta functions:
\begin{align}
F_n(s)=& \frac{Z(\sigma_{-2n}, s+n)}{Z(\sigma_{-2(n-1)},s+n+1)}, \label{e:F-exp}\\
G_n(s)=& \frac{Z(\sigma_{-(2n+1)}, s+n+\frac12)}{Z(\sigma_{-(2n-1)}, s+n+\frac 32)}.\label{e:F-exp1}
\end{align}
\end{proposition}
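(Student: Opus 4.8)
The plan is to prove the two product identities \eqref{e:F-exp} and \eqref{e:F-exp1} on a half-plane of absolute convergence and then to let them supply the meromorphic continuations of $F_n$ and $G_n$, since the right hand sides are built from Selberg zeta functions already known to extend meromorphically over $\mathbb{C}$. First I would rewrite each Ruelle factor entering \eqref{e:def-Fs} as a single product over primitive conjugacy classes: from the definition of $R(\sigma_k,s)$,
\begin{equation*}
R(\sigma_{-2k}, s+k) = \prod_{[\gamma]_{\mathrm{p}}} \bigl( 1 - e^{-k(\ell_\gamma + i\theta_\gamma)} e^{-s\,\ell_\gamma} \bigr),
\end{equation*}
because $e^{-k i\theta_\gamma}e^{-(s+k)\ell_\gamma} = e^{-k(\ell_\gamma+i\theta_\gamma)}e^{-s\ell_\gamma}$. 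Interchanging the two products then gives
\begin{equation*}
F_n(s) = \prod_{[\gamma]_{\mathrm{p}}} \prod_{k=n}^{\infty} \bigl( 1 - e^{-k(\ell_\gamma + i\theta_\gamma)} e^{-s\,\ell_\gamma} \bigr),
\end{equation*}
the interchange being legitimate for $\mathrm{Re}(s) > 2-n$ because the double product converges absolutely there, which follows from \eqref{e:inf-prod} together with $\delta(\Gamma)<2$.

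Next I would expand the two Selberg zeta functions on the right hand side of \eqref{e:F-exp} as triple products and match exponents. A direct computation collecting the exponent of the general factor gives, for $Z(\sigma_{-2n}, s+n)$,
\begin{equation*}
-(n+p)(\ell_\gamma+i\theta_\gamma) - q(\ell_\gamma-i\theta_\gamma) - s\,\ell_\gamma,
\end{equation*}
and, for $Z(\sigma_{-2(n-1)}, s+n+1)$,
\begin{equation*}
-(n+p)(\ell_\gamma+i\theta_\gamma) - (q+1)(\ell_\gamma-i\theta_\gamma) - s\,\ell_\gamma.
\end{equation*}
Hence, after reindexing $p' = n+p$ (and $q' = q+1$ in the second case), $Z(\sigma_{-2n}, s+n)$ is the product of the factors $1 - e^{-p'(\ell_\gamma+i\theta_\gamma)} e^{-q(\ell_\gamma-i\theta_\gamma)} e^{-s\ell_\gamma}$ over $p'\geq n$, $q\geq 0$, while $Z(\sigma_{-2(n-1)}, s+n+1)$ is the same product restricted to $p'\geq n$, $q'\geq 1$. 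Dividing, every factor with $q\geq 1$ cancels and only the $q=0$ slice survives, which is exactly the double product for $F_n(s)$ obtained above; this is the same telescoping mechanism already used in \eqref{e:ruelle-dec-proof3}.

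The argument for $G_n(s)$ in \eqref{e:F-exp1} is identical after replacing the integer shifts by half-integer ones. Each odd Ruelle factor becomes
\begin{equation*}
R(\sigma_{-(2k+1)}, s+k+\tfrac12) = \prod_{[\gamma]_{\mathrm{p}}}\bigl(1 - e^{-(k+\frac12)(\ell_\gamma+i\theta_\gamma)} e^{-s\ell_\gamma}\bigr),
\end{equation*}
which is valid and rearrangeable for $\mathrm{Re}(s) > \tfrac32 - n$, and the ratio $Z(\sigma_{-(2n+1)}, s+n+\tfrac12)/Z(\sigma_{-(2n-1)}, s+n+\tfrac32)$ again retains precisely the $q=0$ slice, yielding $G_n(s)$. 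In both cases the identity, once established on its half-plane, extends the left hand side meromorphically over $\mathbb{C}$ because the right hand side does.

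I do not expect a genuine obstacle: the content is formal manipulation of convergent infinite products, and the only point requiring care is the justification of interchanging the two products and of regrouping the factors according to the exponent identities above. This rests entirely on the absolute convergence guaranteed by \eqref{e:inf-prod} on the stated half-planes, with the equalities off these half-planes obtained by analytic continuation from the known meromorphy of the $Z(\sigma_k,s)$.
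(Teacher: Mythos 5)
Your proposal is correct and follows essentially the same route as the paper: both rewrite the Ruelle factors as $\prod_{[\gamma]_{\mathrm{p}}}(1-e^{-k(\ell_\gamma+i\theta_\gamma)}e^{-s\ell_\gamma})$, expand the two Selberg zeta functions as triple products, and observe that their ratio telescopes in $q$ so that only the $q=0$ slice survives, with absolute convergence from \eqref{e:inf-prod} justifying the rearrangements and the meromorphy of $Z(\sigma_k,s)$ supplying the continuation. The exponent bookkeeping in both the even and half-integer cases matches the paper's computation in \eqref{e:p1} exactly.
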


\begin{proof}
The  equality of \eqref{e:F-exp} over the absolute convergence domain $\mathrm{Re}(s)>2-n$ implies the meromorphic extension of $F_n(s)$ over $\mathbb{C}$. Hence it suffices to show the equality over the convergence domain.
Over the absolute convergence domain, we have
\begin{equation}\label{e:p1}
\begin{split}
F_n(s)=& \prod_{k=n}^\infty \prod_{[\gamma]_{\mathrm{p}}} (1 - e^{-k(\ell_\gamma+i\theta_\gamma)} e^{-s\ell_\gamma} )\\
    =&   \prod_{[\gamma]_{\mathrm{p}}} \prod_{p\geq 0, q\geq0} 
\frac{ (1 - e^{-(p+n)(\ell_\gamma+i\theta_\gamma)} e^{-q(\ell_\gamma-i\theta_\gamma)} e^{-s\ell_\gamma}) }
{(1 - e^{-(p+n)(\ell_\gamma+i\theta_\gamma)} e^{-(q+1)(\ell_\gamma-i\theta_\gamma)} e^{-s\ell_\gamma})}\\
    =& \frac{Z(\sigma_{-2n}, s+n)}{Z(\sigma_{-2(n-1)},s+n+1)}.
\end{split}
\end{equation}

As above the meromorphic extension of $G_n(s)$ can be proved by the following equality over the absolute convergence domain $\mathrm{Re}(s)>\frac 32-n$,
\begin{equation*}
\begin{split}
G_n(s)= &\prod_{k=n}^\infty \prod_{[\gamma]_{\mathrm{p}}} (1 - e^{-(k+1/2)(\ell_\gamma+i\theta_\gamma)} e^{-s\ell_\gamma}  )\\
=& \prod_{[\gamma]_{\mathrm{p}}} \prod_{p\geq 0, q\geq0}
\frac{ (1 - e^{-(p+n+1/2)(\ell_\gamma+i\theta_\gamma)} e^{-q(\ell_\gamma-i\theta_\gamma)} e^{-s\ell_\gamma}) }
{(1 - e^{-(p+n+1/2)(\ell_\gamma+i\theta_\gamma)} e^{-(q+1)(\ell_\gamma-i\theta_\gamma)} e^{-s\ell_\gamma})}\\
= &\frac{Z(\sigma_{-(2n+1)}, s+n+\frac12)}{Z(\sigma_{-(2n-1)}, s+n+\frac 32)}.
\end{split}
\end{equation*}
\end{proof}


By Propositions \ref{p:ruelle-decomp1} and \ref{p:F-rel}, we have

\begin{corollary} The following equality holds for $s\in\mathbb{C}$ and $n\in\mathbb{N}$,
\begin{align}
F_n(s)^2R_{\rho_{2(n-1)}}(s)=&\frac{Z(\sigma_{2(n-1)}, s-n+1) \, Z(\sigma_{-2n},s+n)}{Z(\sigma_{-2(n-1)}, s+n+1)\, Z(\sigma_{2n}, s-n+2)}, \label{e:F-Ruelle-rel}\\
G_n(s)^2R_{\rho_{2n-1}}(s)=&\frac{Z(\sigma_{2n-1}, s-n+\frac12) \, Z(\sigma_{-(2n+1)},s+n+\frac12)}{Z(\sigma_{2n+1}, s-n+\frac32)\, Z(\sigma_{-(2n-1)}, s+n+\frac32)}.
\label{e:G-Ruelle-rel}
\end{align}
\end{corollary}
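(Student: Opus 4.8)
The plan is to obtain both identities by direct substitution into Proposition \ref{p:ruelle-decomp1}, followed by multiplication with the squares of the factorizations in Proposition \ref{p:F-rel}. No new analytic input is required: every function involved already admits a meromorphic continuation to $\mathbb{C}$, so it suffices to verify the algebraic identity of the four-fold ratios on the common domain of absolute convergence and then invoke analytic continuation.

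For the even case I would specialize Proposition \ref{p:ruelle-decomp1} to $m=2(n-1)$. Substituting $m/2=n-1$ and $m+2=2n$ gives
\begin{equation*}
R_{\rho_{2(n-1)}}(s)=\frac{Z(\sigma_{2(n-1)}, s-n+1)\, Z(\sigma_{-2(n-1)}, s+n+1)}{Z(\sigma_{2n}, s-n+2)\, Z(\sigma_{-2n}, s+n)}.
\end{equation*}
On the other hand, \eqref{e:F-exp} yields $F_n(s)^2 = Z(\sigma_{-2n}, s+n)^2 / Z(\sigma_{-2(n-1)}, s+n+1)^2$. Multiplying the two expressions, the factor $Z(\sigma_{-2n}, s+n)$ in the denominator of $R_{\rho_{2(n-1)}}$ cancels one power from the numerator of $F_n(s)^2$, while one power of $Z(\sigma_{-2(n-1)}, s+n+1)$ in the numerator of $R_{\rho_{2(n-1)}}$ cancels against the denominator of $F_n(s)^2$, leaving exactly \eqref{e:F-Ruelle-rel}.

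For the odd case I would carry out the identical computation with $m=2n-1$. Then $m/2 = n-\tfrac12$ and $m+2 = 2n+1$, so Proposition \ref{p:ruelle-decomp1} gives
\begin{equation*}
R_{\rho_{2n-1}}(s)=\frac{Z(\sigma_{2n-1}, s-n+\frac12)\, Z(\sigma_{-(2n-1)}, s+n+\frac32)}{Z(\sigma_{2n+1}, s-n+\frac32)\, Z(\sigma_{-(2n+1)}, s+n+\frac12)},
\end{equation*}
while \eqref{e:F-exp1} gives $G_n(s)^2 = Z(\sigma_{-(2n+1)}, s+n+\frac12)^2 / Z(\sigma_{-(2n-1)}, s+n+\frac32)^2$. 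The same two cancellations—one power of $Z(\sigma_{-(2n+1)}, s+n+\frac12)$ against the denominator of $R_{\rho_{2n-1}}$, and one power of $Z(\sigma_{-(2n-1)}, s+n+\frac32)$ against the numerator—then produce \eqref{e:G-Ruelle-rel}.

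The argument is entirely formal, so there is no genuine obstacle; the only point that demands care is the bookkeeping of the shifted spectral parameters and of the signs of the characters $\sigma_{\pm k}$ when substituting $m=2(n-1)$ and $m=2n-1$, so that the arguments of the four Selberg zeta functions line up precisely for the cancellations to occur. Since all factors are meromorphic on $\mathbb{C}$, the identity established on the convergence domain extends to all $s\in\mathbb{C}$, completing the proof.
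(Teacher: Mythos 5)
Your proposal is correct and follows exactly the route the paper takes: the corollary is obtained by specializing Proposition \ref{p:ruelle-decomp1} to $m=2(n-1)$ and $m=2n-1$ and multiplying by the squares of the ratios in Proposition \ref{p:F-rel}, with the two cancellations you describe. The substitutions and the resulting arguments of the Selberg zeta functions all check out.
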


 The following theorem can be considered as a primitive form of Theorem \ref{t:main-theorem} and Theorem \ref{t:odd-case}.

\begin{theorem} \label{t:F-rel} The functions $F_n(s)^2R_{\rho_{2(n-1)}}(s)$ is regular at $s=0$ and
\begin{equation}\label{e:F-rel-prim}
\begin{split}
&F_n(s)^4 R_{\rho_{2(n-1)}}(s)^2 \Big|_{s=0} \\
=& \exp\left(\, -\frac{2}{\pi} \left(2n^2-2n+\frac13\right) \mathrm{Vol}(\mathcal{M}_\Gamma) 
-2i\pi \left(\eta(D(\sigma_{2n})) -\eta(D(\sigma_{2(n-1)}))\right) \, \right).
\end{split}
\end{equation}

The function $G_n(s)^2 R_{\rho_{2n-1}}(s)$ is regular at $s=0$ and
\begin{equation}\label{e:G-rel-prim}
\begin{split}
&G_n(s)^4 R_{\rho_{2n-1}}(s)^2 \Big|_{s=0} \\
=&  \exp\left(\, -\frac{2}{\pi} \left(2n^2-\frac16\right) \mathrm{Vol}(\mathcal{M}_\Gamma) 
-2i\pi \left(\eta(D(\sigma_{2n+1})) -\eta(D(\sigma_{2n-1}))\right) \, \right).
\end{split}
\end{equation}

\end{theorem}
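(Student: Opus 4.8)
The plan is to reduce the whole statement to the functional equations of the Selberg zeta functions $Z(\sigma_k,s)$, and to read those off from the Selberg trace formulas \eqref{e:heat-trace0}--\eqref{e:heat-trace}. First I would take the identities \eqref{e:F-Ruelle-rel} and \eqref{e:G-Ruelle-rel}, already proved above, which express $F_n(s)^2R_{\rho_{2(n-1)}}(s)$ and $G_n(s)^2R_{\rho_{2n-1}}(s)$ as a ratio of four Selberg zeta functions. Setting $s=0$ in \eqref{e:F-Ruelle-rel} gives the quotient of $Z(\sigma_{2(n-1)},1-n)\,Z(\sigma_{-2n},n)$ by $Z(\sigma_{-2(n-1)},n+1)\,Z(\sigma_{2n},2-n)$, and likewise for $G_n(s)^2R_{\rho_{2n-1}}(s)$ with the half-integer arguments.

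The key point is that these four factors pair off under the reflection $s\mapsto 2-s$ combined with $\sigma_k\mapsto\sigma_{-k}$: the argument $1-n$ of $\sigma_{2(n-1)}$ is sent to $n+1=2-(1-n)$ of $\sigma_{-2(n-1)}$, and the argument $n$ of $\sigma_{-2n}$ is sent to $2-n$ of $\sigma_{2n}$, matching exactly the numerator/denominator structure in \eqref{e:F-Ruelle-rel}. I would therefore establish a functional equation of the shape
\[
Z(\sigma_k,s)=Z(\sigma_{-k},2-s)\,\exp\bigl(\mathrm{Vol}(\mathcal{M}_\Gamma)\,Q_k(s)+\Xi_k(s)\bigr),
\]
adapting the meromorphic-continuation machinery of \cite{BO, GP}. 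Here $Q_k$ is the real polynomial obtained by integrating the Plancherel density of the principal series $\pi_{k,\lambda}$ of \eqref{e:indcued-rep} — whose $\lambda^2$-type shape is visible in the identity terms of \eqref{e:heat-trace0}--\eqref{e:heat-trace} — and $\Xi_k(s)$ is the phase assembled from the rotation-angle factors $e^{\frac{k}{2}i\theta_\gamma}$ occurring in $Z(\sigma_k,s)$. The essential input is that $\Xi_k$, evaluated at the relevant points, equals $\pi i\,\eta(D(\sigma_k))$: this I would obtain by running the Selberg trace formula for the first-order operators $D(\sigma_k)$ and comparing with the definition \eqref{e:def-eta}, in the spirit of the relation \eqref{e:APS}.

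Applying this functional equation to each of the two pairs cancels the transcendental $Z$-values and leaves the product of the corresponding fudge factors, giving
\[
F_n(s)^2R_{\rho_{2(n-1)}}(s)\big|_{s=0}=\exp\bigl(\mathrm{Vol}(\mathcal{M}_\Gamma)\,(Q_{2(n-1)}(1-n)+Q_{-2n}(n))+\Xi_{2(n-1)}(1-n)+\Xi_{-2n}(n)\bigr).
\]
Since the right-hand side is $\exp$ of entire functions, regularity (finiteness and non-vanishing) at $s=0$ follows at once. Squaring this value should reproduce \eqref{e:F-rel-prim}: the Plancherel polynomials must sum to $-\frac1\pi\bigl(2n^2-2n+\frac13\bigr)$, and, after using $\eta(D(\sigma_{-k}))=-\eta(D(\sigma_k))$, the phases must assemble into $-i\pi\bigl(\eta(D(\sigma_{2n}))-\eta(D(\sigma_{2(n-1)}))\bigr)$. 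The formula \eqref{e:G-rel-prim} for $G_n$ follows by the identical argument, with the half-integer arguments $\tfrac12-n,\,n+\tfrac12,\,\tfrac32-n,\,n+\tfrac32$ and the coefficient $2n^2-\tfrac16$.

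The step I expect to be the main obstacle is pinning down the functional equation with the correct normalization of \emph{both} fudge factors: proving that the phase $\Xi_k$ is exactly $\pi i\,\eta(D(\sigma_k))$ — with the right multiple of $\pi$ and not merely some unidentified spectral constant — and carrying out the Plancherel integral so that the polynomial evaluations land on $2n^2-2n+\frac13$ and $2n^2-\frac16$ on the nose. A secondary point is to confirm that the arguments $1-n,\,n,\,n+1,\,2-n$ do not sit on a zero or pole that would obstruct the pairwise cancellation; this is in fact automatic, since each quotient equals $\exp$ of an entire function, but it should be checked against the known locations of the trivial zeros of $Z(\sigma_k,s)$.
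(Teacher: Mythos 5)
Your overall route is the paper's: start from \eqref{e:F-Ruelle-rel} and \eqref{e:G-Ruelle-rel}, pair the four Selberg zeta factors under $s\mapsto 2-s$, $\sigma_k\mapsto\sigma_{-k}$, and invoke the functional equation. That functional equation, with the phase already identified as $e^{i\pi\eta(D(\sigma_k))}$, is not something you need to re-derive from the trace formula: it is Theorem 3.18 of \cite{BO}, quoted in the paper as \eqref{e:funct-Selberg-zeta}. So the step you flag as the main obstacle is available off the shelf, and your Plancherel and phase computations do land on the stated constants.

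The genuine gap is the point you dismiss as ``secondary'' and ``automatic''. At $s=0$ the pairing replaces, for instance, $Z(\sigma_{2(n-1)},1-n)$ by $e^{i\pi\eta(D(\sigma_{2(n-1)}))}\exp(\cdots)\,Z(\sigma_{-2(n-1)},n+1)$, and you then cancel this against the denominator $Z(\sigma_{-2(n-1)},n+1)$. But $n+1$ (and likewise $n$ for $\sigma_{-2n}$) may be a zero or a pole of the corresponding Selberg zeta function, so the ``cancellation'' is a $0/0$ or $\infty/\infty$ and must be computed as a limit. Your claim that each quotient is $\exp$ of an entire function applies to $Z(\sigma_{2(n-1)},s-n+1)/Z(\sigma_{-2(n-1)},-s+n+1)$, whose arguments are reflections of each other for \emph{all} $s$; it does not apply to the quotient actually occurring in \eqref{e:F-Ruelle-rel}, namely $Z(\sigma_{2(n-1)},s-n+1)/Z(\sigma_{-2(n-1)},s+n+1)$, whose arguments form a reflected pair only at the single point $s=0$. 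The correct limit produces an extra factor $\lim_{s\to0}Z(\sigma_{-2(n-1)},-s+n+1)/Z(\sigma_{-2(n-1)},s+n+1)=(-1)^{m}$, with $m$ the order of the zero or pole, which is $\pm1$ and not controlled. This is exactly why the theorem is stated for $F_n(s)^4R_{\rho_{2(n-1)}}(s)^2$ rather than for $F_n(s)^2R_{\rho_{2(n-1)}}(s)$: squaring kills the sign. As written, your argument would ``prove'' the unsquared identity, which is in general false up to sign; you need to insert the reflected-ratio factors explicitly (as in \eqref{e:re-F-ruelle-rel} and \eqref{e:re-F-ruelle-rel2}), observe that each contributes only $\pm1$, and then square. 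Regularity at $s=0$ does survive this correction, since an entire nonvanishing factor times a ratio with limit $\pm1$ is regular and nonzero there.
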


\begin{remark} \label{r:reg-Ruelle-zero}
By definitions in \eqref{e:def-Fs}, \eqref{e:def-Gs}, and equalities \eqref{e:F-rel-prim}, \eqref{e:G-rel-prim}, one can see that
$R_{\rho_m}(s)$ is regular and has a finite value
at $s=0$ for $m\geq 3$.
\end{remark}

\begin{proof}
We prove only for the case with $F_n(s)$ since the proof for the other case with $G_n(s)$ is the same.
We start by rewriting \eqref{e:F-Ruelle-rel} as follows,
\begin{equation}\label{e:re-F-ruelle-rel}
\begin{split}
F_n(s)^2R_{\rho_{2(n-1)}}(s)= & 
\frac{Z(\sigma_{2(n-1)}, s-n+1)}{Z(\sigma_{-2(n-1)},-s+n+1)}\, \frac{Z(\sigma_{-2(n-1)}, -s+n+1)}{Z(\sigma_{-2(n-1)}, s+n+1)} \\
\cdot & \frac{Z(\sigma_{-2n}, -s+n)}{Z(\sigma_{2n}, s-n+2)}\, \frac{Z(\sigma_{-2n}, s+n)}{Z(\sigma_{-2n}, -s+n)}.
\end{split}
\end{equation}
By Theorem 3.18 of \cite{BO},
\begin{equation}\label{e:funct-Selberg-zeta}
Z(\sigma_k,1+s) = e^{i\pi \eta(D(\sigma_k))} \exp\left( \frac{\mathrm{Vol}(\mathcal{M}_\Gamma)}{\pi} \left(\frac{1}{3} s^3- \frac{k^2}{4}s \right) \right) 
Z(\sigma_{-k},1-s).
\end{equation}
By \eqref{e:re-F-ruelle-rel} and \eqref{e:funct-Selberg-zeta},
\begin{equation}\label{e:re-F-ruelle-rel2}
\begin{split}
&F_n(s)^2 R_{\rho_{2(n-1)}}(s)\Big|_{s=0}\\
 =&\exp\left(\, -\frac{1}{\pi} \left(2n^2-2n+\frac13\right) \mathrm{Vol}(\mathcal{M}_\Gamma) 
-i\pi \left(\eta(D(\sigma_{2n})) -\eta(D(\sigma_{2(n-1)}))\right) \, \right)\\
\cdot & \left(\,  \frac{Z(\sigma_{-2(n-1)}, -s+n+1)}{Z(\sigma_{-2(n-1)}, s+n+1)}\,  \frac{Z(\sigma_{-2n}, s+n)}{Z(\sigma_{-2n}, -s+n)}\, \right) \Big|_{s=0}.
\end{split}
\end{equation}
The last part evaluated at $s=0$ on the right hand side of \eqref{e:re-F-ruelle-rel2} need not to be  $1$ since each factor given by the Selbeg zeta
function may have a zero or a pole at $s=0$. Hence, the concerning part is $1$ or $-1$ in general. 
We remove this ambiguity by taking the square to obtain \eqref{e:F-rel-prim}.

\end{proof}

\begin{theorem} The following equalities hold
\begin{align}
F_1(s)^2 R_{\rho_0}(s) \Big |_{s=0} =&-\exp \Big(-\frac{1}{3\pi}\left(\, \mathrm{Vol}(\mathcal{M}_\Gamma)+i3\pi^2 \eta(D(\sigma_2))\, \right)\Big), \label{e:F-rel}\\
G_0(s)^4 \Big|_{s=0}=& \exp \Big(\frac{1}{3\pi}\left(\,
\mathrm{Vol}(\mathcal{M}_\Gamma)-i12\pi^2 \eta({D}({\sigma_1}))\right) \Big)\label{e:G-rel}.
\end{align}
\end{theorem}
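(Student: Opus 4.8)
The plan is to reduce each identity to the functional equation \eqref{e:funct-Selberg-zeta} and then to track the orders of vanishing of the relevant Selberg zeta functions at the special arguments. The essential difference from Theorem \ref{t:F-rel} is that \eqref{e:F-rel} involves a \emph{first} power of the Ruelle factor rather than a square, so the sign is no longer free and must be pinned down; this is exactly the place where the trivial representation $\rho_0$ and the bottom of the spectrum enter, and it is the main obstacle below.

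For \eqref{e:G-rel} I would start from \eqref{e:F-exp1} with $n=0$, which gives $G_0(s)=Z(\sigma_{-1},s+\tfrac12)/Z(\sigma_1,s+\tfrac32)$. Writing $Z(\sigma_1,s+\tfrac32)=Z(\sigma_1,1+(s+\tfrac12))$ and applying \eqref{e:funct-Selberg-zeta} with $k=1$ turns the denominator into $Z(\sigma_{-1},\tfrac12-s)$ times the factor $e^{i\pi\eta(D(\sigma_1))}\exp(\frac{V}{\pi}(\frac13(s+\tfrac12)^3-\frac14(s+\tfrac12)))$, where $V=\mathrm{Vol}(\mathcal{M}_\Gamma)$. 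At $s=0$ the exponential evaluates to $e^{-V/12\pi}$ and the quotient $Z(\sigma_{-1},\tfrac12)/Z(\sigma_{-1},\tfrac12)$ is $\pm1$ (it is $(-1)^p$ if $Z(\sigma_{-1},\cdot)$ has a zero of order $p$ at $\tfrac12$), so $G_0(0)=\pm e^{-i\pi\eta(D(\sigma_1))}e^{V/12\pi}$. Raising to the fourth power both removes the sign and yields $\exp(\frac{1}{3\pi}(V-i12\pi^2\eta(D(\sigma_1))))$; using the fourth power is precisely what lets me avoid computing $p$.

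For \eqref{e:F-rel} I would take $n=1$ in \eqref{e:F-Ruelle-rel}, so that
\[
F_1(s)^2 R_{\rho_0}(s)=\frac{Z(\sigma_0,s)\,Z(\sigma_{-2},s+1)}{Z(\sigma_0,s+2)\,Z(\sigma_2,s+1)},
\]
and treat the two ratios separately. The ratio $Z(\sigma_{-2},s+1)/Z(\sigma_2,s+1)$ is handled by \eqref{e:funct-Selberg-zeta} with $k=2$: since the volume polynomial $\frac13 s^3-s$ vanishes at $s=0$, this factor contributes only $e^{-i\pi\eta(D(\sigma_2))}$, up to the parity of a possible common zero at the central point $s=1$. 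For the ratio $Z(\sigma_0,s)/Z(\sigma_0,s+2)$ I would apply \eqref{e:funct-Selberg-zeta} with $k=0$ to $Z(\sigma_0,s+2)=Z(\sigma_0,1+(s+1))$, rewriting it as $e^{i\pi\eta(D(\sigma_0))}\exp(\frac{V}{3\pi}(s+1)^3)Z(\sigma_0,-s)$; once one knows $\eta(D(\sigma_0))=0$ this leaves $e^{-V/3\pi}\,Z(\sigma_0,s)/Z(\sigma_0,-s)$ at $s=0$.

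The crux, and the hardest step, is the factor $Z(\sigma_0,s)/Z(\sigma_0,-s)$ at $s=0$: if $Z(\sigma_0,\cdot)$ vanishes to order $m$ at $s=0$, this limit equals $(-1)^m$. I would show $m=1$ by identifying the zero at $s=0$ as coming solely from the eigenvalue $0$ of the scalar Laplacian, of multiplicity $b_0(\mathcal{M}_\Gamma)=1$, there being no extra ``trivial'' zero of the scalar Selberg zeta at $s=0$ in dimension three (the Plancherel density is a polynomial). This odd order produces the overall sign $-1$ in \eqref{e:F-rel}, which together with $e^{-V/3\pi}$ and $e^{-i\pi\eta(D(\sigma_2))}$ gives the stated value. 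Thus the delicate points the argument must secure are (i) $\eta(D(\sigma_0))=0$, (ii) that $Z(\sigma_0,\cdot)$ has an \emph{odd}-order zero at $s=0$, and (iii) that the central-point contribution of the $\sigma_{\pm2}$ factors carries even parity, so that the total sign is unambiguously $-1$ rather than merely $\pm1$.
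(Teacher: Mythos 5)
Your treatment of \eqref{e:G-rel} is correct and is essentially the paper's argument in streamlined form: you apply the functional equation \eqref{e:funct-Selberg-zeta} with $k=1$ once to $G_0(s)=Z(\sigma_{-1},s+\frac12)/Z(\sigma_1,s+\frac32)$ and let the fourth power absorb the sign $(-1)^p$ coming from the reflection ratio at the central point $\frac12$, whereas the paper routes the same computation through the auxiliary functions $Z^e$, $Z^o$ and their functional equations \eqref{e:p6} before squaring twice; the content is the same. Your reduction of \eqref{e:F-rel} also follows the paper's outline (the $n=1$ case of \eqref{e:F-Ruelle-rel}, the functional equation with $k=0$, and the simple zero of $Z(\sigma_0,\cdot)$ at $s=0$ from the constant functions, which is the paper's citation of Theorem 4.6 of \cite{GP}), and your items (i) and (ii) are handled at the same level of rigor as in the text.

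The genuine gap is your item (iii), which you correctly identify as necessary but never prove. You need
\[
\frac{Z(\sigma_{-2},1+s)}{Z(\sigma_{2},1+s)}\Big|_{s=0}=e^{-i\pi\eta(D(\sigma_2))},
\]
and by the functional equation this is equivalent to showing that the reflection ratio $Z(\sigma_{2},1-s)/Z(\sigma_{2},1+s)$ tends to $+1$, i.e.\ that the order $q$ of vanishing of $Z(\sigma_2,\cdot)$ at the central point $s=1$ is even. Nothing in the functional equation forces this: for a manifold with $b_1(\mathcal{M}_\Gamma)>0$ the central zero is genuinely present, and parity of $q$ is not accessible by the symmetry argument that worked for $Z(\sigma_0,\cdot)$ at $s=0$ (there the multiplicity was pinned down as $b_0=1$). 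Since \eqref{e:F-rel} involves only the first power of the Ruelle factor, this sign cannot be squared away, so the statement is not established without an additional input. The paper supplies exactly this input in \eqref{e:cont-F-ruelle2} by invoking the main theorem of Millson \cite{Mil}, which evaluates the central value of the odd quotient directly, $\lim_{s\to 0}Z(\sigma_2,1-s)/Z(\sigma_{-2},1-s)=e^{i\pi\eta(D(\sigma_2))}$, so that the product with the functional-equation factor $e^{-i\pi\eta(D(\sigma_2))}$ is exactly $1$ with no residual parity to determine. To complete your proof you must either quote Millson's theorem or prove an equivalent statement about the central behaviour of $Z(\sigma_{\pm2},\cdot)$.
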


\begin{remark}\label{r:FG}
For the equality \eqref{e:G-rel}, the Ruelle zeta function does not appear on the left hand side of it. This is because the corresponding
term is formally $R_{\rho_{-1}}(0)$, which can be understood to be $1$ putting $m=-1$ on the right hand side of \eqref{e:ruelle-decomp1}. 
\end{remark}

\begin{proof}
The equality \eqref{e:re-F-ruelle-rel2} for $n=1$ is written as
\begin{equation}\label{e:rel-F-ruelle-rel2-1}
\begin{split}
&F_1(s)^2 R_{\rho_{0}}(s)\Big|_{s=0}\\
 =&\exp\left(\, -\frac{1}{3\pi}  \mathrm{Vol}(\mathcal{M}_\Gamma) 
-i\pi \eta(D(\sigma_{2}))  \, \right)
\cdot  \left(\,  \frac{Z(\sigma_0, -s+2)}{Z(\sigma_0, s+2)}\,  \frac{Z(\sigma_{-2}, s+1)}{Z(\sigma_{-2}, -s+1)}\, \right) \Big|_{s=0}
\end{split}
\end{equation}
noting $\eta(D(\sigma_0))=0$. By \eqref{e:funct-Selberg-zeta},
\begin{equation}
Z(\sigma_0, \pm s) = \exp\left( \frac{\mathrm{Vol}(\mathcal{M}_\Gamma)}{\pi} \frac{1}{3} (1 \pm s )^3  \right) 
Z(\sigma_{0},2 \mp s).
\end{equation}
By Theorem 4.6 of \cite{GP}, the Selberg zeta function 
 $Z(\sigma_0,s)$ has a simple zero at $s=0$, which corresponds to the zero eigenvalue of the selfadjoint Hodge Laplacian acting on $\Omega^0(\mathcal{M}_\Gamma)$.
Combining these, we have
\begin{equation}\label{e:cont-F-ruelle1}
\frac{Z(\sigma_0, -s+2)}{Z(\sigma_0, s+2)} \Big | _{s=0} =\frac{Z(\sigma_0, s)}{Z(\sigma_0, -s)} \Big|_{s=0}=-1.
\end{equation}
Again, by \eqref{e:funct-Selberg-zeta},
\begin{equation}
Z(\sigma_{-2},1+s) = e^{-i\pi \eta(D(\sigma_2))} \exp\left( \frac{\mathrm{Vol}(\mathcal{M}_\Gamma)}{\pi} \left(\frac{1}{3} s^3- s \right) \right) 
Z(\sigma_{2},1-s).
\end{equation}
noting $\eta(D(\sigma_2))=-\eta(D(\sigma_{-2}))$. Hence,
\begin{equation}\label{e:cont-F-ruelle2}
 \frac{Z(\sigma_{-2}, s+1)}{Z(\sigma_{-2}, -s+1)}\, \Big|_{s=0}= \frac{Z(\sigma_{2}, -s+1)}{Z(\sigma_{-2}, -s+1)}\, \Big|_{s=0} \, e^{-i\pi \eta(D(\sigma_2))} = 1.
\end{equation}
The last equality follows from the main theorem at p.2 of \cite{Mil}. By \eqref{e:rel-F-ruelle-rel2-1},  \eqref{e:cont-F-ruelle1}, and \eqref{e:cont-F-ruelle2},
we conclude that the equality \eqref{e:F-rel} holds.

To prove the statement for $G_0(s)$, let us introduce the following zeta functions 
\begin{equation}
Z^e(s)=Z(\sigma_{1},s) Z(\sigma_{-1},s), \qquad Z^o(s)=\frac{Z(\sigma_{1},s)}{Z(\sigma_{-1},s)}.
\end{equation}
By \eqref{e:funct-Selberg-zeta} and $\eta(D(\sigma_1))=-\eta(D(\sigma_{-1}))$, we have
\begin{equation}\label{e:p6}
\begin{split}
&Z^e(1+s)  = \exp \Big(\frac{2}{\pi}\mathrm{Vol}(\mathcal{M}_\Gamma) (\frac{s^3}{3}-\frac{s}{4}) \Big)\, Z^e(1-s),\\
&Z^o(1+s) Z^o(1-s) = \exp \big(2\pi i \eta(D(\sigma_1))\big).
\end{split}
\end{equation}
By \eqref{e:F-exp1}, we have
\begin{equation}
\begin{split}
G_0(s)^2=& \frac{Z(\sigma_{-1}, s+\frac12)^2}{Z(\sigma_{1}, s+\frac 32)^2}\\
=& \Big(\frac{Z(\sigma_{-1}, s+\frac12)}{Z(\sigma_{1}, s+\frac12)}\cdot \frac{Z(\sigma_{-1}, s+\frac 32)}{Z(\sigma_{1}, s+\frac 32)}\Big) \cdot
\Big(\frac{Z(\sigma_{1}, s+\frac12) Z(\sigma_{-1}, s+\frac12)}{Z(\sigma_{1}, s+\frac32) Z(\sigma_{-1}, s+\frac32)} \Big) \\
=& Z^o(s+\frac12)^{-1}\, Z^o(s+\frac32)^{-1}\, Z^e(s+\frac12)\, Z^e(s+\frac32)^{-1}\\
=& Z^o(s+\frac12)^{-1}\, Z^o(s+\frac32)^{-1}\, \frac{Z^e(s+\frac12)}{Z^e(-s+\frac12)}\, \frac{Z^e(-s+\frac12)}{Z^e(s+\frac32)}.
\end{split}
\end{equation}
Using this and the equalities in \eqref{e:p6}, we have
\begin{equation}\label{e:p7}
G_0(s)^2= \frac{Z^e(s+\frac12)}{Z^e(-s+\frac12)}\, \exp\Big(\, -\frac{2}{\pi}\mathrm{Vol}(\mathcal{M}_\Gamma) \big( \frac13(s+\frac12)^3-\frac14(s+\frac12)\big)\, \Big)
\, \exp(-2\pi i \eta(D(\sigma_1))).
\end{equation}
Hence, the function $G_0(s)$ is regular at $s=0$. As before, to remove the ambiguity from the first part at $s=0$ on the right hand side of \eqref{e:p7}, we take
its square and obtain the expression of $G_0(s)^4$ at $s=0$ given in \eqref{e:G-rel}. 
\end{proof}


\section{Determinant and Torsion}

\subsection{Determinant and Selberg zeta function}
We start with

\begin{lemma}\label{l:asymp} For $p=0,1$, we have the following asymptotics as $t\to0$,
\begin{equation}\label{e:heat-asymp}
\mathrm{Tr} \left( e^{-t\Delta^\flat_p} \right) \sim  \frac{1}{4\pi^2}  \mathrm{dim} (V_\rho)\, \mathrm{Vol}(\mathcal{M}_\Gamma)\, \left( a_{1}(p) t^{-3/2} + a_{2}(p) t^{-1/2} \right) + O(t^{1/2}) 
\end{equation}
where $a_1(0)= \frac12\sqrt{\pi}$, $a_2(0)=-\frac12\sqrt{\pi}$, $a_1(1)= \frac32 \sqrt{\pi}$, $a_2(1)= \frac32\sqrt{\pi}$. 
\end{lemma}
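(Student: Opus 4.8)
The plan is to read the asymptotics off directly from the Selberg trace formulae \eqref{e:heat-trace0} and \eqref{e:heat-trace}. The crucial observation is that the geometric side of each formula, the sum over nontrivial conjugacy classes $[\gamma]\neq[e]$, carries the factor $\frac{1}{\sqrt{4\pi t}}\,e^{-\ell_\gamma^2/4t}$. Since $\Gamma$ is cocompact and torsion free, the lengths $\ell_\gamma$ are bounded below by some $\ell_0>0$, and although the coefficients $\ell_{\gamma_0}\,\mathrm{tr}\,\rho(\gamma)/D(\gamma)$ grow only exponentially in $\ell_\gamma$ (so the non-unitarity of $\rho$ is harmless), the Gaussian factor dominates that growth as $t\to 0^+$. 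Hence the entire geometric side is of order $e^{-\ell_0^2/8t}$ and is absorbed into the remainder $O(t^{1/2})$. Thus, up to an exponentially small error, the small-$t$ expansion is governed entirely by the identity contribution, i.e.\ the first term on the right hand side of each formula.

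First I would record the elementary Gaussian integrals
\begin{equation*}
\int_{\mathbb{R}} e^{-t\lambda^2}\,d\lambda=\sqrt{\pi}\,t^{-1/2},\qquad \int_{\mathbb{R}}\lambda^2 e^{-t\lambda^2}\,d\lambda=\frac{\sqrt{\pi}}{2}\,t^{-3/2}.
\end{equation*}
For $p=0$ I would apply \eqref{e:heat-trace0} to get the asymptotics of $\mathrm{Tr}(e^{-t(\Delta_0^\flat-1)})$, whose identity term is exactly the single power $\frac{1}{4\pi^2}\mathrm{dim}(V_\rho)\mathrm{Vol}(\mathcal{M}_\Gamma)\cdot\frac{\sqrt{\pi}}{2}t^{-3/2}$, and then pass to $\Delta_0^\flat$ via $\mathrm{Tr}(e^{-t\Delta_0^\flat})=e^{-t}\,\mathrm{Tr}(e^{-t(\Delta_0^\flat-1)})$. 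Expanding $e^{-t}=1-t+O(t^2)$ turns the single $t^{-3/2}$ term into $\frac{\sqrt{\pi}}{2}\bigl(t^{-3/2}-t^{-1/2}\bigr)+O(t^{1/2})$, giving $a_1(0)=\frac12\sqrt{\pi}$ and $a_2(0)=-\frac12\sqrt{\pi}$. For $p=1$ I would apply \eqref{e:heat-trace} to expand the difference $\mathrm{Tr}(e^{-t\Delta_1^\flat})-\mathrm{Tr}(e^{-t\Delta_0^\flat})$, whose identity term is $\frac{1}{4\pi^2}\mathrm{dim}(V_\rho)\mathrm{Vol}(\mathcal{M}_\Gamma)\bigl(\sqrt{\pi}\,t^{-3/2}+2\sqrt{\pi}\,t^{-1/2}\bigr)$, and then add back the expansion of $\mathrm{Tr}(e^{-t\Delta_0^\flat})$ just obtained. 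Collecting coefficients yields $a_1(1)=\sqrt{\pi}+\frac12\sqrt{\pi}=\frac32\sqrt{\pi}$ and $a_2(1)=2\sqrt{\pi}-\frac12\sqrt{\pi}=\frac32\sqrt{\pi}$.

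The computation is essentially bookkeeping once the trace formulae are in hand, so there is no serious analytic obstacle. The one point requiring care is the $p=0$ case: because \eqref{e:heat-trace0} naturally produces $\Delta_0^\flat-1$ rather than $\Delta_0^\flat$, and because the identity contribution to $\mathrm{Tr}(e^{-t(\Delta_0^\flat-1)})$ consists of the \emph{single} power $t^{-3/2}$, it is precisely the linear term $-t$ of $e^{-t}$ that feeds this coefficient down into the $t^{-1/2}$ term and produces the correct value $a_2(0)=-\frac12\sqrt{\pi}$; omitting the shift would give the wrong subleading coefficient. Beyond this, one needs only to confirm the exponential smallness of the geometric side, for which the lower bound $\ell_0>0$ on the length spectrum together with the Gaussian-versus-polynomial comparison suffices.
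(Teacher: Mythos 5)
Your proposal is correct and follows the same route as the paper: read the small-$t$ asymptotics off the identity contributions of \eqref{e:heat-trace0} and \eqref{e:heat-trace}, discard the hyperbolic terms as $O(e^{-c/t})$, and then recover $\mathrm{Tr}(e^{-t\Delta_0^\flat})$ from $\mathrm{Tr}(e^{-t(\Delta_0^\flat-1)})$ via the factor $e^{-t}=1-t+O(t^2)$ and $\mathrm{Tr}(e^{-t\Delta_1^\flat})$ by adding back. Your arithmetic for all four coefficients matches the paper's intermediate values $a_1(0)=\tfrac12\sqrt{\pi}$, $a_2(0)=0$, $a_1(1)=\sqrt{\pi}$, $a_2(1)=2\sqrt{\pi}$ in \eqref{e:asymp-rel} and the final ones in \eqref{e:heat-asymp}, and your remark that the $-t$ term of $e^{-t}$ is what produces $a_2(0)=-\tfrac12\sqrt{\pi}$ is exactly the point the paper leaves as "follows easily."
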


\begin{proof}
The small time asymptotics of $\mathrm{Tr}(e^{-t(\Delta^\flat_0-1)})$, $\mathrm{Tr}(e^{-\Delta^\flat_1}) -\mathrm{Tr}(e^{-t\Delta^\flat_0})$ 
follow from the ones of the right hand sides
of \eqref{e:heat-trace0} and \eqref{e:heat-trace} respectively. The second part of the right hand side from hyperbolic element $\gamma$'s has the size of $O(e^{-c/t})$ for a constant $c>0$,
hence the main contribution is given by the first part from the identity element in $\Gamma$. Hence, we have the following asymptotics
\begin{equation}\label{e:asymp-rel}
\begin{split}
&\mathrm{Tr}(e^{-t(\Delta^\flat_0-1)})  \sim  \frac{1}{4\pi^2}  \mathrm{dim} (V_\rho)\, \mathrm{Vol}(\mathcal{M}_\Gamma)\, \left( a_{1}(0) t^{-3/2} + a_{2}(0) t^{-1/2} \right) + O(t^{1/2}),\\
&\mathrm{Tr}(e^{-\Delta^\flat_1}) -\mathrm{Tr}(e^{-t\Delta^\flat_0}) \sim  \frac{1}{4\pi^2}  \mathrm{dim} (V_\rho)\, \mathrm{Vol}(\mathcal{M}_\Gamma)\, \left( a_{1}(1) t^{-3/2} + a_{2}(1) t^{-1/2} \right) + O(t^{1/2})
\end{split}
\end{equation}
where   $a_1(0)= \frac12\sqrt{\pi}$, $a_2(0)=0$, $a_1(1)=  \sqrt{\pi}$, $a_2(1)= 2\sqrt{\pi}$. From these, the equality \eqref{e:heat-asymp}
follows easily.
\end{proof}

Now we choose a complex number $s$ is in $\Lambda_\epsilon\setminus B_r$ for some $\epsilon>0$ and $r>0$ such that
the spectrum of $\Delta^\flat_p+s^2$ for $p=0,1$ lies on the right half plane with its real part is bigger than some $\delta>0$.
Under this assumption for a fixed $s$, we consider
\begin{equation}\label{e:zeta-det}
\zeta_p(z,s) =\frac{1}{\Gamma(s)}\int^\infty_0 t^{z-1} e^{-ts^2} \mathrm{Tr} \left( e^{-t\Delta_p^\flat} \right)\, dt.
\end{equation}
Note that this integral converges absolutely and uniformly on compact subsets of the half plane $\mathrm{Re}(z) >\frac32$.
Using Lemma \ref{l:asymp}, one can show that the $\zeta_p(z,s)$ has a meromorphic extension with respect to $z$ over $\mathbb{C}$, and it is regular at $z=0$.  For $p=0,1$, we define
\begin{equation}\label{e:def-det} 
\mathrm{det}\left( \Delta_p^\flat+s^2 \right)= \exp \left( -\frac{d}{dz}\Big|_{z=0} \, \zeta_p(z,s) \right) .
\end{equation}
In a similar way, we define $\mathrm{det}\left(\Delta^\flat_0-1+s^2\right)$.

\begin{lemma}\label{l:asymp-large-s}
As $s\to\infty$ in the region $\Lambda_\epsilon\setminus B_r$,
\begin{equation}\label{e:det-asymp-infty}
\begin{split}
&\log \mathrm{det} \left(\Delta_0^\flat-1+s^2\right) =  \frac{1}{2\pi}  {\mathrm{dim}(V_\rho)\,\mathrm{Vol}(\mathcal{M}_\Gamma)} \left( -\frac13 s^3 \right)  +O(s^{-1}),\\
&\log \mathrm{det} \left(\Delta_1^\flat+s^2\right)- \log \mathrm{det} \left(\Delta_0^\flat+s^2\right)= \frac{1}{\pi}  {\mathrm{dim}(V_\rho)\,\mathrm{Vol}(\mathcal{M}_\Gamma)} \left( s -\frac13 s^3 \right) +O(s^{-1}).
\end{split}
\end{equation}
Here we take the principal branch for the logarithm. 
\end{lemma}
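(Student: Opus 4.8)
The plan is to work directly from the zeta-regularized definition \eqref{e:def-det} and feed in the Selberg trace formulae \eqref{e:heat-trace0} and \eqref{e:heat-trace}, exploiting the fact that the contribution of the identity element to the heat trace is a \emph{finite} sum of Gaussian integrals, so its Mellin transform is computable in closed form, while the hyperbolic contributions turn out to be exponentially small in $s$. Writing $u=s^2$, the Mellin transform \eqref{e:zeta-det} gives $\log\mathrm{det}(\Delta_p^\flat+u)=-\frac{d}{dz}\big|_{z=0}\zeta_p(z,u)$, and I would treat $\log\mathrm{det}(\Delta_0^\flat-1+u)$ in the same way with $\Delta_0^\flat-1$ in place of $\Delta_0^\flat$. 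For the second line I would use linearity of the Mellin transform and subtract the two zeta functions \emph{before} differentiating, so that the only input is the combination $\mathrm{Tr}(e^{-t\Delta_1^\flat})-\mathrm{Tr}(e^{-t\Delta_0^\flat})$ of \eqref{e:heat-trace}. In both cases the identity term has the shape $\mathrm{dim}(V_\rho)\,\mathrm{Vol}(\mathcal{M}_\Gamma)\cdot c\int_{\mathbb{R}}P(\lambda)e^{-t\lambda^2}\,d\lambda$ for an explicit even polynomial $P$, and evaluating $\int_{\mathbb{R}}\lambda^{2k}e^{-t\lambda^2}\,d\lambda$ turns it into an exact finite sum $\sum_k\kappa_k\,t^{-(2k+1)/2}$.

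Next I would split $\zeta_p(z,u)=\zeta_p^{\mathrm{id}}(z,u)+\zeta_p^{\mathrm{hyp}}(z,u)$ accordingly. The identity part is exact: inserting $t^{-(2k+1)/2}$ and using $\int_0^\infty t^{z-1}e^{-tu}\,t^{-(2k+1)/2}\,dt=\Gamma\!\bigl(z-\tfrac{2k+1}{2}\bigr)u^{(2k+1)/2-z}$ gives
\[
\zeta_p^{\mathrm{id}}(z,u)=\sum_k \kappa_k\,\frac{\Gamma\!\bigl(z-\tfrac{2k+1}{2}\bigr)}{\Gamma(z)}\,u^{(2k+1)/2-z}.
\]
Since $1/\Gamma(z)=z+O(z^2)$ near $z=0$ while each $\Gamma\!\bigl(z-\tfrac{2k+1}{2}\bigr)$ and $u^{\,\cdots-z}$ is regular there, every summand vanishes to first order at $z=0$, and a one-line computation yields $-\frac{d}{dz}\big|_{z=0}$ of the $k$-th summand equal to $-\kappa_k\,\Gamma\!\bigl(-\tfrac{2k+1}{2}\bigr)u^{(2k+1)/2}$. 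Plugging in $\Gamma(-\tfrac32)=\tfrac{4}{3}\sqrt{\pi}$ and $\Gamma(-\tfrac12)=-2\sqrt{\pi}$, reading off $\kappa_k$ from \eqref{e:heat-trace0} and \eqref{e:heat-trace}, and substituting $u^{3/2}=s^3$, $u^{1/2}=s$ (the principal branch, legitimate since $s\in\Lambda_\epsilon\setminus B_r$ forces $u^{1/2}=s$), reproduces the stated cubic and linear coefficients for both identities after collecting the numerical factors.

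The hyperbolic part carries the error term and is where the main care lies. Each hyperbolic term contributes $\frac{1}{\Gamma(z)}\int_0^\infty t^{z-3/2}e^{-tu-\ell_\gamma^2/4t}\,dt$, which is entire in $z$ because the factor $e^{-\ell_\gamma^2/4t}$ removes the singularity at $t=0$; evaluating the $t$-integral as a modified Bessel function and invoking its large-argument asymptotics shows it is $O\!\bigl(e^{-\ell_\gamma\sqrt{u}}\bigr)=O(e^{-\ell_\gamma s})$, uniformly for $s\in\Lambda_\epsilon\setminus B_r$ once $\epsilon<\pi/4$, since there $\mathrm{Re}(s)\geq |s|\cos\epsilon>0$ and $\mathrm{Re}(u)>0$. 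The delicate point is to keep this estimate uniform after differentiation at $z=0$ (again only the $z$-factor from $1/\Gamma(z)$ survives) and after summation over conjugacy classes; the sum over $[\gamma]$ converges and remains exponentially small because $\delta(\Gamma)<2$ controls $\sum_\gamma e^{-\ell_\gamma s}$, so the total hyperbolic contribution to $\log\mathrm{det}$ is $O(e^{-\ell_0 s})$ with $\ell_0$ the length of the shortest closed geodesic.

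This $O(e^{-\ell_0 s})$ bound is in fact stronger than the claimed $O(s^{-1})$, so the lemma follows once the two identity computations are assembled. The remaining obstacle is purely a matter of legitimacy rather than of new estimates: one must confirm that the meromorphic continuation of $\zeta_p(z,u)$, the differentiation under the integral, the regularity at $z=0$, and the exact Mellin evaluations above are all valid for the \emph{non-selfadjoint} operator $\Delta_p^\flat$. I would point to the construction \eqref{e:zeta-det}--\eqref{e:def-det} together with Proposition \ref{p:spect-Delta}, which guarantees the spectral confinement to $B_r\cup\Lambda_\epsilon$ and the resolvent bounds needed to justify these manipulations, so that the computation proceeds exactly as in the selfadjoint case.
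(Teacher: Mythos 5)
Your proposal is correct and takes essentially the same route as the paper: the paper's one-line proof likewise feeds the identity contribution of the heat-trace expansion \eqref{e:asymp-rel} into the Mellin transform $\int_0^\infty t^{z-1}e^{-ts^2}\,dt=s^{-2z}\Gamma(z)$ and differentiates at $z=0$, and your coefficient bookkeeping ($\Gamma(-\tfrac32)=\tfrac43\sqrt{\pi}$, $\Gamma(-\tfrac12)=-2\sqrt{\pi}$) reproduces the stated constants. The only difference is that you keep the hyperbolic terms exact and obtain an exponentially small error, whereas the paper absorbs them into the $O(t^{1/2})$ remainder of Lemma \ref{l:asymp} and settles for the claimed $O(s^{-1})$.
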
 

\begin{proof} The asymptotics as $s\to\infty$ follows from \eqref{e:asymp-rel} and 
$\int^\infty_0 t^{z-1} e^{-ts^2}  \, dt = s^{-2z}\, \Gamma(z)$.
\end{proof}

By  Lemma \ref{l:asymp}, we have 
\begin{equation}\label{e:det-zeta-rel}
\begin{split}
&\frac{1}{2s} \frac{d}{ds} \log \mathrm{det} \left(\Delta^\flat_p+s^2\right) - \frac1{2s_0} \frac{d}{ds}\Big{|}_{s=s_0} \log \mathrm{det} 
\left(\Delta^\flat_p+s^2\right)\\
=&\lim_{z\to 0} \left( -\frac{1}{2s} \frac{d}{ds}\left(\Gamma(z) \zeta_p(z,s)\right) +\frac{1}{2s_0} \frac{d}{ds}\Big{|}_{s=s_0}\left(\Gamma(z)\zeta_{p}(z,s)\right) \right)\\
=&\, \int_0^\infty \left( e^{-ts^2} - e^{-ts_0^2} \right) \mathrm{Tr}( e^{-t\Delta_p^\flat})\, dt.
\end{split}
\end{equation}

Now, we deal with the geometric side of \eqref{e:heat-trace} as we did for the spectral side, that is, we multiply $e^{-ts^2}$ to the geometric side of \eqref{e:heat-trace} and take integral $\int^\infty_0 \cdot\, dt$. First, to deal with the terms from hyperbolic elements,
from \eqref{e:def-D-gamma} we recall
\begin{equation}\label{e:D-expan}
\begin{split}
D(\gamma)^{-1}= &\ e^{-\ell_\gamma} (1-e^{-(\ell_\gamma+i\theta_\gamma)})^{-1} (  1-e^{-(\ell_\gamma-i\theta_\gamma)})^{-1}\\
                           = &\ e^{-\ell_\gamma} \prod_{p\geq 0, q\geq 0} e^{-p(l_\gamma+i\theta_\gamma)} e^{-q(l_\gamma-i\theta_\gamma)}.
\end{split}
\end{equation}
Then we obtain the following equalities
\begin{equation}\label{e:selberg-zeta-test}
\begin{split}
& \sum_{[\gamma]\neq [e]} \frac{\ell_{\gamma_0}\,  \mathrm{tr}\,\rho(\gamma)}{ D(\gamma)} 
e^{i\theta_\gamma} \int^\infty_{0} \frac{1}{\sqrt{4\pi t}} e^{-\ell_\gamma^2/4t-ts^2} \, dt 
= \frac{1}{2s} \sum_{[\gamma]\neq [e]} \frac{\ell_{\gamma_0}\,  \mathrm{tr}\,\rho(\gamma)}{ D(\gamma)} 
e^{i\theta_\gamma} e^{-s\ell_\gamma}\\
=&\frac{1}{2s}\sum_{[\gamma]\neq [e]} {\ell_{\gamma_0}\,  \mathrm{tr}\,\rho(\gamma)}
e^{i\theta_\gamma}  \prod_{p\geq 0, q\geq 0} e^{-p(l_\gamma+i\theta_\gamma)} e^{-q(l_\gamma-i\theta_\gamma)} e^{-(s+1)\ell_\gamma}\\
=&\frac1{2s}\sum_{[\gamma]_{\mathrm{p}}\neq [e]} \sum_{m=1}^\infty \frac{\ell_{\gamma}}{m}\, \left( \mathrm{tr}\,\rho(\gamma)
e^{i\theta_\gamma}  \prod_{p\geq 0, q\geq 0} e^{-p(l_\gamma+i\theta_\gamma)} e^{-q(l_\gamma-i\theta_\gamma)} e^{-(s+1)\ell_\gamma}\right)^m\\
=&-\frac1{2s} \sum_{[\gamma]_{\mathrm{p}}\neq [e]}\, \ell_\gamma \, \log\, \mathrm{det}\left(\mathrm{Id} - \rho(\gamma)
e^{i\theta_\gamma}  \prod_{p\geq 0, q\geq 0} e^{-p(l_\gamma+i\theta_\gamma)} e^{-q(l_\gamma-i\theta_\gamma)} e^{-(s+1)\ell_\gamma}\right)\\
=& \frac1{2s} \frac{d}{ds}\log Z_\rho(\sigma_2, s+1).
\end{split}
\end{equation}
Here the sums at the third and forth lines run over the set of conjugacy classes in $\Gamma$ of \emph{primitive} hyperbolic elements in $\Gamma$.
For the first equality above, we used
\begin{equation}
\int^\infty_0 \frac{1}{\sqrt{4\pi t}} e^{-\ell^2/4t-ts^2} \, dt = \frac{1}{2s} e^{-\ell s}.
\end{equation}

Repeating the same procedure for the identity contribution of the geometric side of \eqref{e:heat-trace}, 
\begin{equation}\label{e:ide-cont-test}
\begin{split}
 &\frac{1}{4\pi^2}  {\mathrm{dim}(V_\rho)\,\mathrm{Vol}(\mathcal{M}_\Gamma)}
\int_0^\infty  e^{-t s^2} \int^\infty_0\left(\lambda^2+ 1\right)\, e^{-t\lambda^2} \, d\lambda \, dt \\
=& \frac{1}{4\pi^2}  {\mathrm{dim}(V_\rho)\,\mathrm{Vol}(\mathcal{M}_\Gamma)}
\int_{0}^\infty   e^{-ts^2} \sqrt{\pi}\left(  \frac{1}2 t^{-3/2} + t^{-1/2} \right)  \, dt \\
=&\frac1{2s}  \frac{1}{2\pi}  {\mathrm{dim}(V_\rho)\,\mathrm{Vol}(\mathcal{M}_\Gamma)} \left( -s^2+1 \right). 
\end{split}
\end{equation}

Combining  \eqref{e:heat-trace}, \eqref{e:det-zeta-rel}, \eqref{e:selberg-zeta-test}, and \eqref{e:ide-cont-test},  
\begin{equation}\label{e:selberg-det}
\begin{split}
&\ \frac1s \frac{d}{ds} \log \frac{ \mathrm{det} \left(\Delta^\flat_1+s^2\right)}{\mathrm{det} \left( \Delta^\flat_0+s^2\right)}
 - \frac1{s_0} \frac{d}{ds}\Big|_{s=s_0} \log \frac{ \mathrm{det} \left(\Delta^\flat_1+s^2\right)}{\mathrm{det} \left( \Delta^\flat_0+s^2\right)}\\
=&\ \frac1s \frac{d}{ds}\log \left(Z_\rho(\sigma_2, s+1)Z_\rho(\sigma_{-2}, s+1)\right) \\
&-  \frac1{s_0} \frac{d}{ds}\Big|_{s=s_0}\, \log \left(Z_\rho(\sigma_2, s+1)Z_\rho(\sigma_{-2}, s+1)\right)\\
&+ \frac1s  \frac{1}{\pi}  {\mathrm{dim}(V_\rho)\,\mathrm{Vol}(\mathcal{M}_\Gamma)} \left( -s^2+1 \right) -\frac1{s_0}  \frac{1}{\pi}  {\mathrm{dim}(V_\rho)\,\mathrm{Vol}(\mathcal{M}_\Gamma)} \left( -s_0^2+1 \right),
\end{split}
\end{equation}
which implies
\begin{equation}\label{e:det-zeta2}
\begin{split}
& \log \mathrm{det}\left(\Delta^\flat_1+s^2\right)-\log {\mathrm{det} \left( \Delta^\flat_0+s^2\right)}\\
= &\, \log Z_\rho(\sigma_2, s+1)+  \log Z_\rho(\sigma_{-2}, s+1)\\
&+ \frac{1}{\pi}  {\mathrm{dim}(V_\rho)\,\mathrm{Vol}(\mathcal{M}_\Gamma)} \left( s -\frac13 s^3 \right) + c_2s^2 +c_0
\end{split}
\end{equation}
for some constants $c_2,c_0$.
In a similar way using \eqref{e:heat-trace0}, we obtain
\begin{equation}\label{e:det-zeta0}
\begin{split}
&\log {\mathrm{det} \left( \Delta^\flat_0-1+s^2\right)}\\
=& \log Z_\rho(\sigma_0, s+1)+ \frac{1}{2\pi}  {\mathrm{dim}(V_\rho)\,\mathrm{Vol}(\mathcal{M}_\Gamma)} \left( -\frac13 s^3 \right) + d_2s^2 +d_0
\end{split}
\end{equation}
for some constants $d_2, d_0$. By Lemma \ref{l:asymp-large-s} and the fact that $\log Z_\rho(\sigma_k,s)$ decays exponentially as $s\to\infty$, the constants $c_2,c_0,d_2,d_0$ are trivial.  Hence, we have

\begin{proposition} The following equalities hold for $s\in\mathbb{C}$,
\begin{align}\label{e:det-exp0}
\mathrm{det} \left( \Delta^\flat_0-1+s^2\right)= Z_\rho(\sigma_0, s+1) \exp\left(-\frac{1}{6\pi} {\mathrm{dim}(V_\rho)\,\mathrm{Vol}(\mathcal{M}_\Gamma)} s^3\right),
\end{align}
\begin{align}\label{e:det-exp1}
\frac{\mathrm{det}\left(\Delta^\flat_1+s^2\right)}{ {\mathrm{det} \left( \Delta^\flat_0+s^2\right)}} = Z_\rho(\sigma_2, s+1)Z_\rho(\sigma_{-2}, s+1)
\exp\left( \frac{1}{\pi}  {\mathrm{dim}(V_\rho)\,\mathrm{Vol}(\mathcal{M}_\Gamma)} \left( s -\frac13 s^3 \right) \right). 
\end{align}
\end{proposition}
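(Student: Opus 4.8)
The plan is to obtain \eqref{e:det-exp0} and \eqref{e:det-exp1} by exponentiating the relations \eqref{e:det-zeta0} and \eqref{e:det-zeta2}, whose only remaining defect is the presence of the undetermined corrections $d_2 s^2 + d_0$ and $c_2 s^2 + c_0$. These corrections are unavoidable consequences of the method: the identity \eqref{e:det-zeta-rel} together with \eqref{e:selberg-zeta-test} and \eqref{e:ide-cont-test} computes $\tfrac{1}{s}\tfrac{d}{ds}$ of the relevant $\log\mathrm{det}$ in terms of the logarithmic derivatives of $Z_\rho(\sigma_k, s+1)$ and the volume polynomial, up to its value at a reference point $s_0$; clearing the factor $\tfrac1s$ and integrating once in $s$ then produces exactly one quadratic term (from the reference-point constant) and one additive constant. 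Everything thus reduces to proving $c_2 = c_0 = d_2 = d_0 = 0$.

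To pin down the constants I would let $s \to \infty$ inside the sector $\Lambda_\epsilon \setminus B_r$ and compare both sides of \eqref{e:det-zeta0} and \eqref{e:det-zeta2}. On the determinant side, Lemma \ref{l:asymp-large-s} supplies the asymptotics \eqref{e:det-asymp-infty}, whose non-decaying part is \emph{exactly} the cubic (and, in the case of \eqref{e:det-zeta2}, linear) volume term, with remainder $O(s^{-1})$ and, crucially, with no quadratic and no constant contribution. On the zeta side, I would use that $\log Z_\rho(\sigma_k, s+1) \to 0$ exponentially as $\mathrm{Re}(s) \to \infty$: this is immediate from the defining Euler product, each factor tending to $1$ at a rate governed by $e^{-\mathrm{Re}(s)\,\ell_{\gamma_0}}$ for the shortest closed geodesic, and within $\Lambda_\epsilon$ one has $\mathrm{Re}(s) = |s|\cos(\arg s) \ge |s|\cos\epsilon \to \infty$, so the decay is genuine throughout the sector.

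Substituting these two asymptotics into \eqref{e:det-zeta0} (resp. \eqref{e:det-zeta2}), the volume polynomials cancel term by term --- note $\tfrac{1}{2\pi}\cdot\tfrac13 = \tfrac{1}{6\pi}$, matching the coefficient in \eqref{e:det-exp0}, and similarly for \eqref{e:det-zeta2} --- and what survives is a relation of the shape $d_2 s^2 + d_0 = O(s^{-1}) + O(e^{-cs})$ for some $c > 0$. Letting $s \to \infty$ forces the right-hand side to $0$; since $d_2, d_0$ are genuine constants, the function $s \mapsto d_2 s^2 + d_0$ can tend to $0$ only if $d_2 = d_0 = 0$, and the same argument gives $c_2 = c_0 = 0$. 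Exponentiating \eqref{e:det-zeta0} and \eqref{e:det-zeta2} with the constants removed then yields the two asserted equalities.

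I expect the delicate point to be the asymptotic bookkeeping rather than any structural obstacle. The argument hinges on two features that must be established with care: that the large-$s$ determinant expansion of Lemma \ref{l:asymp-large-s} and the decay of $\log Z_\rho(\sigma_k, s+1)$ both hold \emph{uniformly} as $s \to \infty$ throughout $\Lambda_\epsilon \setminus B_r$, not merely along the real axis, and that the determinant expansion carries no concealed $s^2$ or $s^0$ term. It is precisely the absence of such terms that leaves no room for the free constants, so verifying it is the crux; once uniformity in the sector is in hand, the conclusion is the elementary fact that a fixed quadratic tending to $0$ at infinity must vanish identically.
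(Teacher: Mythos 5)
Your argument is exactly the paper's: the identities \eqref{e:det-zeta0} and \eqref{e:det-zeta2} are derived up to the constants $c_2,c_0,d_2,d_0$, which the paper also kills by comparing with Lemma \ref{l:asymp-large-s} as $s\to\infty$ in $\Lambda_\epsilon\setminus B_r$ together with the exponential decay of $\log Z_\rho(\sigma_k,s+1)$. The only point you leave implicit is the final extension from the sector to all of $s\in\mathbb{C}$, which the paper handles by invoking the meromorphicity of $Z_\rho(\sigma_k,s)$.
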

\begin{proof}
Although the left hand sides of \eqref{e:det-exp0} and \eqref{e:det-exp1} are defined a priori for
$s$ with some conditions, these can be extended over $\mathbb{C}$ by the meromorphicity of $Z_{\rho}(\sigma_k,s)$.
\end{proof}

By \eqref{e:ruelle-decomp1}, \eqref{e:det-exp0}, and \eqref{e:det-exp1},
\begin{equation}\label{e:ruelle-torsion}
\begin{split}
R_{\rho}(s)=&\frac{ Z_{\rho}(\sigma_0, s) \, Z_{\rho}(\sigma_0,s+2)}{Z_{\rho}(\sigma_2,s+1)\, Z_{\rho}(\sigma_{-2},s+1)}\\
=&\frac{\mathrm{det}\left(\Delta^\flat_0-1+(s-1)^2\right)\, \mathrm{det}\left( \Delta_0^\flat-1 +(s+1)^2\right) {\mathrm{det} \left( \Delta^\flat_0+s^2\right)} }{\mathrm{det}\left(\Delta^\flat_1+s^2\right)}\\
&\cdot \exp\left( \frac{2s}{\pi} {\mathrm{dim}(V_\rho)\,\mathrm{Vol}(\mathcal{M}_\Gamma)} \right).
\end{split}
\end{equation}
This equality implies the following functional equation of $R_{\rho}(s)$,
\begin{theorem}\label{t:funct-eq}
The following equality holds for $s\in\mathbb{C}$,
\begin{equation}\label{e:ruelle-funct}
R_{\rho}(s)= R_{\rho}(-s) \exp \left( \frac{4s}{\pi} {\mathrm{dim}(V_\rho)\,\mathrm{Vol}(\mathcal{M}_\Gamma)} \right).
\end{equation}
\end{theorem}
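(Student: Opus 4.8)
The plan is to read off the functional equation directly from the determinant factorization \eqref{e:ruelle-torsion}, exploiting that each zeta-regularized determinant appearing there is an even function of its spectral shift. Writing $d_\rho=\mathrm{dim}(V_\rho)$ and $V=\mathrm{Vol}(\mathcal{M}_\Gamma)$, equation \eqref{e:ruelle-torsion} expresses $R_\rho(s)$ as a ratio of four determinants multiplied by the explicit factor $\exp(\tfrac{2s}{\pi}d_\rho V)$. The first step is to observe that, by the definitions \eqref{e:zeta-det} and \eqref{e:def-det}, the quantity $\mathrm{det}(\Delta_p^\flat+s^2)$ depends on $s$ only through $s^2$: the parameter $s$ enters $\zeta_p(z,s)$ solely via the factor $e^{-ts^2}$, so $\zeta_p(z,s)$, and hence its $z$-derivative at $z=0$ and thus the regularized determinant, is a function of $s^2$ alone. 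The same reasoning applies verbatim to $\mathrm{det}(\Delta_0^\flat-1+s^2)$.

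The second step is to apply the substitution $s\mapsto-s$ to the right-hand side of \eqref{e:ruelle-torsion}. Since $(-s-1)^2=(s+1)^2$ and $(-s+1)^2=(s-1)^2$, the two factors $\mathrm{det}(\Delta_0^\flat-1+(s-1)^2)$ and $\mathrm{det}(\Delta_0^\flat-1+(s+1)^2)$ are merely interchanged, while $\mathrm{det}(\Delta_0^\flat+s^2)$ and $\mathrm{det}(\Delta_1^\flat+s^2)$ are left unchanged by evenness. Hence the entire ratio of determinants in \eqref{e:ruelle-torsion} is invariant under $s\mapsto-s$, and the only alteration is in the prefactor, which becomes $\exp(-\tfrac{2s}{\pi}d_\rho V)$.

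Dividing the two expressions for $R_\rho(s)$ and $R_\rho(-s)$ then yields $R_\rho(s)/R_\rho(-s)=\exp(\tfrac{4s}{\pi}d_\rho V)$, which is precisely \eqref{e:ruelle-funct}. Because \eqref{e:ruelle-torsion} is already valid for all $s\in\mathbb{C}$ via the meromorphicity of the $Z_\rho(\sigma_k,s)$, no separate analytic continuation is needed. I expect essentially no serious obstacle here: the single point requiring care is the parity claim for the regularized determinants, and this is immediate from the integral representation \eqref{e:zeta-det}, in which $s$ occurs only through $e^{-ts^2}$.
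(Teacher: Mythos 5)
Your strategy is exactly the paper's: Theorem \ref{t:funct-eq} is obtained there from the determinant expression \eqref{e:ruelle-torsion} by precisely the swap-and-evenness argument you describe, and the paper gives no more detail than ``this equality implies the functional equation.'' So the route is right; but there is one step where ``immediate'' oversells the situation, and it is the only genuinely delicate point of the whole proof.

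The integral representation \eqref{e:zeta-det} defines $\mathrm{det}(\Delta_p^\flat+s^2)$ only for $s$ in the sector $\Lambda_\epsilon\setminus B_r$, which is \emph{not} symmetric under $s\mapsto -s$; equivalently, viewing the determinant as a function of $w=s^2$ on the half-plane $U$ where the heat-trace integral converges, the preimage $\{s:\ s^2\in U\}$ is symmetric but \emph{disconnected}, with $s$ and $-s$ lying in different components. For general $s$ the symbols $\mathrm{det}(\Delta_0^\flat-1+(s\pm1)^2)$, $\mathrm{det}(\Delta_p^\flat+s^2)$ in \eqref{e:ruelle-torsion} are \emph{defined} by meromorphic continuation through \eqref{e:det-exp0} and \eqref{e:det-exp1}, i.e.\ as Selberg zeta functions of $s$ times $\exp(\mp\frac{1}{6\pi}\dim(V_\rho)\mathrm{Vol}(\mathcal{M}_\Gamma)\,(s\pm 1)^3)$ and the like --- expressions that are manifestly functions of $s$, not of $s^2$. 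Evenness of a function on a non-symmetric (or disconnected symmetric) domain does not pass to its meromorphic continuation via the identity theorem, so the parity you need is not a formal consequence of \eqref{e:zeta-det}: it is precisely equivalent to the functional equation \eqref{e:funct-Selberg-zeta} of the Selberg factors. The repair costs nothing beyond what the paper already quotes: either (i) apply \eqref{e:funct-Selberg-zeta} directly to the four factors of \eqref{e:ruelle-decomp1} --- the phases cancel in pairs since $\eta(D(\sigma_{-k}))=-\eta(D(\sigma_k))$, and the cubic volume terms combine to exactly $\exp(\frac{4s}{\pi}(m+1)\mathrm{Vol}(\mathcal{M}_\Gamma))$ with $\dim V_{\rho_m}=m+1$, which proves \eqref{e:ruelle-funct} without mentioning determinants at all --- or (ii) first establish that $w\mapsto\mathrm{det}(\Delta_p^\flat+w)$ extends to a meromorphic function of the single variable $w$, after which evenness of its composite with $s\mapsto s^2$ is tautological and your argument goes through verbatim. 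As written, your proof asserts the conclusion of (i)/(ii) without supplying it.
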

Noting \eqref{e:comp-volume}, the equation \eqref{e:ruelle-funct} is compatible with Theorem 1.1 of \cite{GP} which holds for unitary representation $\rho$.
Let us remark that the Ruelle zeta function in this paper is the inverse of the one of \cite{GP}.

\subsection{Reidemeister torsion} For an $n$-dimensional vector space over $\mathbb{C}$, let $v=(v_1,\ldots, v_n)$ and $w=(w_1,\ldots, w_n)$ are two bases for $V$. Let $[w/v]$ denote the determinant of the matrix $T=(t_{ij})$ representing
the change of base from $v$ to $w$, that is, $w_i=\sum t_{ij} v_j$. Suppose 
\begin{equation} 
C: C_N\ \stackrel{\partial}{\rightarrow}\ C_{N-1}\ \stackrel{\partial}{\rightarrow} \ \cdots \ \stackrel{\partial}{\rightarrow}\  C_1\ 
\stackrel{\partial}{\rightarrow}\ C_0 
\end{equation} 
is a
chain complex of finite complex modules. Let $Z_q$ denote the kernel of $\partial$ in $C_q$, $B_q\subset C_q$ the image of $C_{q+1}$ under $\partial$, and $H_q(C)=Z_q/B_q$ the $q$-th homology group of $C$. Choose a base $b_q$ for
$B_q$ for each $q$, and let $\tilde{b}_{q-1}$ be an independent set in $C_q$ such that $\partial \tilde{b}_{q-1}= b_{q-1}$, and $\tilde{h}_q$ an independent set in $Z_q$ representing a base $h_q$ of $H_q(C)$. Then $(b_q,
\tilde{h}_q, \tilde{b}_{q-1})$ is a base for $C_q$. For a given preferred base $c_q$ for $C_q$, note that $[b_q,\tilde{h}_q, \tilde{b}_{q-1}/c_q]$ depends only on $b_q$, $h_q$, $b_{q-1}$, hence we denote it by $[b_q,{h}_q,
{b}_{q-1}/c_q]$. The \emph{torsion} $\tau(C)$ of the chain complex $C$ is the nonzero complex number defined by 
\begin{equation} \label{e:def-alg-torsion}
\mathcal{T}(C)= \prod_{q=0}^N [b_q,{h}_q, {b}_{q-1}/c_q]^{(-1)^q}. 
\end{equation} 
Note that $\mathcal{T}(C)$
depends only on the choice of the bases $c_q$, $h_q$, but not on the choice of the bases $b_q$.

Let $K$ be a finite cell complex and $\tilde{K}$ the simply connected covering space of $K$ with the fundamental group $\pi_1$ of $K$ acting as deck transformations on $\tilde{K}$. Regarding that $\tilde{K}$ is a just the set of
translates of a fundamental domain under $\pi_1$, the complex cochain groups $C^q(\tilde{K})$ become modules over the complex group algebra $\mathbb{C}(\pi_1)$ with a preferred base consisting of the dual element of cells of $K$. Relative to these
preferred base, the boundary operator on the right $C(\pi_1)$-module $C^q(\tilde{K})$ is a matrix with coefficients in $\mathbb{C}(\pi_1)$. For a representation $\rho$ of $\pi_1(K)$ into $\mathrm{SL}(\mathbb{C}^N)$, define the chain complex
$C(K,\rho)$ by
\begin{equation} 
C^q(K,\rho) = C^q(\tilde{K}) \otimes _{\mathbb{C}(\pi_1)} \mathbb{C}^N 
\end{equation} 
where $\mathbb{C}^N$ is considered as left $\mathbb{C}(\pi_1)$-module via the action of $\rho$. 
The boundary map of $C(K,\rho)$ is defined to be the dual map of the boundary map of the cell complex. We choose a
preferred base $e_i\otimes x_j$ where  $e_i$ runs through the preferred base of $C^q(\tilde{K})$ and $x_j$ runs through a base for $\mathbb{C}^N$. 
Now the \emph{Reidemeister torsion} $\mathcal{T}(K,\rho)$ attached to
the representation $\rho$ is defined by 
\begin{equation} \label{e:def-Reide}
\mathcal{T}(K,\rho)=\mathcal{T}(C(K,\rho)).
\end{equation}  
A different choice of the preferred bases $e_i$ can give at most sign change of $\mathcal{T}(K,\rho)$ since $\rho$ is a
representation into $SL(\mathbb{C}^N)$. A different choice of the base $x'_j$ for $\mathbb{C}^N$ can also give the change by the factor $[x'/x]^{\chi(C)}$ where $\chi(C)$ denotes the Euler characteristic of $C$. Hence, if
$\chi(C)=0$, the Reidemeister torsion $\mathcal{T}(C(K,\rho))$ is well-defined as an invariant with a value in $\mathbb{C}^*/\{\pm 1\}$ depending only on the choice of the bases $h_q$ for $H_q(C)$. By \cite{Mi}, it is known that
$\mathcal{T}(C(K,\rho))$ is a combinatorial invariant of $(K, \rho)$. Hence, if $\mathcal{M}$ is a compact oriented manifold, any smooth triangulation of $\mathcal{M}$ gives the same Reidemeister torsion. We denote it by $\mathcal{T}(\mathcal{M},\rho)$.

When the cohomology groups $H^*(\mathcal{M},\rho):= H_{*}(C(K, {\rho}))$ are trivial for a smooth triangulation $K$ of $\mathcal{M}$, 
the square of Reidemeister torsion $\mathcal{T}^2(\mathcal{M},\rho)$
is a well defined complex number. 

To state the theorem 10.1 of \cite{CM}, we need some preparation to introduce the analytic torsion defined by the non-selfadjoint operators $\Delta_p^\flat$
in the context of the subsection \ref{ss:hodge}. Hence, the operator $\Delta_p^\flat$ acts on $C^\infty(\mathcal{M}, E_\rho)$ where $\mathcal{M}$ is a closed Riemannian manifold and $E$ is a flat vector bundle defined by a representation $\rho:\pi_1(\mathcal{M})\to \mathrm{SL}(\mathbb{C}^N)$.
In general, the non-selfadjoint operator $\Delta_p^\flat$ acting on $\Omega^p(\mathcal{M}, E_\rho)$ may have a generalized eigenvalue with non-positive real part. Hence, the definitions
\eqref{e:zeta-det} and \eqref{e:def-det} do not work simply if we put $s=0$ at \eqref{e:zeta-det} and \eqref{e:def-det}. For this, we recall the construction in the section of \cite{CM}. Let $r>0$ be a real number that is not the real part of any generalized eigenvalues of $\Delta_p^\flat$.  Let $\Pi_{p,r}$ denote the spectral projection on the span of the
generalized eigenvectors with generalized eigenvalues with real part less than $r$. Noting that Lemma \ref{l:asymp-large-s} still holds for the heat operator of  $\Delta_{p,r}^\flat:=(\mathrm{Id}-\Pi_{p,r}) \Delta_p^\flat$ since $\Pi_{p,r}$ is a smoothing operator, we define
\begin{equation}\label{e:def-det1} 
\mathrm{det}\Delta_{p,r}^\flat = \exp \left( -\frac{d}{dz}\Big|_{z=0} \, \zeta_{p,r}(z) \right) 
\end{equation}
where
\begin{equation}\label{e:zeta-det1}
\zeta_{p,r}(z) =\frac{1}{\Gamma(s)}\int^\infty_0 t^{z-1}  \mathrm{Tr} \left( e^{-t\Delta_{p,r}^\flat} \right)\, dt.
\end{equation}
Let us denote the zero generalized eigenspace of $\Delta^\flat_p$ by $\Omega^p_0(\mathcal{M}, E_{\rho})$.
The cohomology of the complex $(\Omega^*_0(\mathcal{M}, E_{\rho}),d)$ is the same as the cohomology of $(\Omega^*(\mathcal{M}, E_\rho), d)$.
We define its torsion as in \eqref{e:def-alg-torsion} by
\begin{equation}\label{e:def-zero-torsion}
\mathcal{T}_0(\mathcal{M}, \rho)= \mathcal{T}( \Omega^*_0(\mathcal{M}, E_{\rho}),d).
\end{equation}
Then the \emph{analytic torsion} $T(\mathcal{M},\rho)$ is defined by
\begin{equation}\label{e:def-anal-tor}
T(\mathcal{M},\rho) =\mathcal{T}_0(\mathcal{M}, \rho)^2 \cdot 
\prod_{p=1}^N \left(\mathrm{det}\Delta_{p,r}^\flat \right)^{p (-1)^{p+1}}\cdot \prod_{p=1}^N \left(\prod_{\lambda_{p,j}\in S(p,r)} \lambda_{p,j}\right)^{p (-1)^{p+1}}. 
\end{equation}
Here $S(p,r)$ denotes the set of
all the nonzero generalized eigenvalues counted with multiplicities with real part less than $r$. By the theorem 8.3 of \cite{CM}, the above definition of $T(\mathcal{M},\rho)$ in \eqref{e:def-anal-tor} does not depend on the choice of $r$.
When the cohomology groups $H^*(\mathcal{M},\rho)$ are trivial, the following equality holds between two complex valued
invariants
\begin{equation}\label{e:Cap-Mil}
\mathcal{T}^2(\mathcal{M},\rho) = T(\mathcal{M}, \rho)
\end{equation}
by the theorem 10.1 of \cite{CM}.

\section{Proof of Main theorem}

By Remark \ref{r:reg-Ruelle-zero},  $R_{\rho_m}(s)$ is regular and has a finite value at $s=0$ for $m\geq 3$. By this fact and  \eqref{e:ruelle-torsion},
we have
\begin{equation}\label{e:ruelle-torsion-0} 
R_{\rho_m}(0) =\lim_{s\to 0} \
\frac{\mathrm{det}\left(\Delta^\flat_0+s^2-2s\right)\, \mathrm{det}\left( \Delta_0^\flat+s^2+2s\right) {\mathrm{det} \left( \Delta^\flat_0+s^2\right)} }{\mathrm{det}\left(\Delta^\flat_1+s^2\right)}.
\end{equation}
For $p=0,1$, we take a sufficiently small $r>0$ such that the real parts of the generalized eigenvalues of $\Delta_p^\flat$ with positive real parts are bigger than $r$. Then, it is easy to check that
\begin{equation}\label{e:limit-zeta-det}
\begin{split}
\lim_{s\to 0}\, s^{-2h_p}\mathrm{det} \left(\Delta_p^\flat+s^2\right) =& \lim_{s\to 0} \mathrm{det}  \left(\Delta_{p,r}^\flat+s^2\right)\cdot s^{-2h_p} \prod_{\lambda_{p,j}\in S(p,r)} \left(\lambda_{p,j}+s^2 \right)\\
=&\  \mathrm{det}  \Delta_{p,r}^\flat \cdot \prod_{\lambda_{p,j}\in S(p,r)} \lambda_{p,j} 
\end{split}
\end{equation}
where $h_p$ denotes the multiplicities of the zero generalized eigenvalue of $\Delta^\flat_p$.
Similar equalities hold for other factors $\mathrm{det}\left( \Delta_0^\flat+s^2\pm2s\right)$ on the right hand side of \eqref{e:ruelle-torsion-0}. 
 By \eqref{e:ruelle-torsion-0}, we have $2h_0=h_1$. Using this and 
by \eqref{e:def-anal-tor}, \eqref{e:ruelle-torsion-0}, and \eqref{e:limit-zeta-det},
\begin{equation}\label{e:limit-det-dec}
\begin{split}
R_{\rho_m}(0)=\, & \lim_{s\to 0} \
\frac{s^{-4h_0}\,  \mathrm{det}\left(\Delta^\flat_0+s^2-2s\right)\, \mathrm{det}\left( \Delta_0^\flat+s^2+2s\right) {\mathrm{det} \left( \Delta^\flat_0+s^2\right)} }
{s^{-2h_1}\,\mathrm{det}\left(\Delta^\flat_1+s^2\right)}\\
=\, &\frac{ \left(\mathrm{det} \Delta^\flat_{0,r} \cdot  \prod_{\lambda_{0,j}\in S(0,r)} \lambda_{0,j} \right)^3}
{\mathrm{det} \Delta^\flat_{1,r} \cdot  \prod_{\lambda_{1,j}\in S(1,r)} \lambda_{1,j} }\\
=\, &\frac{ \left(\mathrm{det} \Delta^\flat_{1,r} \cdot  \prod_{\lambda_{1,j}\in S(1,r)} \lambda_{1,j} \right)\, \left(\mathrm{det} \Delta^\flat_{3,r} \cdot  \prod_{\lambda_{3,j}\in S(3,r)} \lambda_{3,j} \right)^3}
{\left(\mathrm{det} \Delta^\flat_{2,r} \cdot  \prod_{\lambda_{2,j}\in S(2,r)} \lambda_{2,j}\right)^2 }\\
=\, & \frac{T(\mathcal{M}_\Gamma, \rho_m)}{\mathcal{T}_0(\mathcal{M}_\Gamma, \rho_m)^2}.
\end{split}
\end{equation}
For the third equality of \eqref{e:limit-det-dec}, we used 
\begin{equation}\label{e:Delta-star}
\left(\star\otimes\mathrm{Id}_{E_{\rho_m}}\right) \, \Delta_p^\flat = \Delta_{3-p}^\flat\, \left(\star\otimes\mathrm{Id}_{E_{\rho_m}}\right) \qquad \text{over} \quad \Omega^p(\mathcal{M}_\Gamma, E_{\rho_m}),
\end{equation}
which follows from  \eqref{e:d-star-def-ext}.

For the representation $\rho_m:\Gamma\to \mathrm{SL}(S^m(\mathbb{C}^2))\subset \mathrm{SL}(\mathbb{C}^{m+1})$, the cohomology groups $H^{*}(\mathcal{M}_\Gamma, E_{\rho_m})$ are trivial by the theorem 6.7 at p.226 of \cite{BW80}. Hence,
the square of the Reidemeister torsion $\mathcal{T}^2(\mathcal{M}_\Gamma,\rho_m)$ is a well-defined complex valued invariant.
By \eqref{e:Cap-Mil}, we have $\mathcal{T}^2(\mathcal{M}_\Gamma,\rho_m)=T(\mathcal{M}_\Gamma,\rho_m)$. 
Combining this and \eqref{e:limit-det-dec},
\begin{equation}\label{e:ruelle-reide-tor}
 \frac{\mathcal{T}(\mathcal{M}_\Gamma,\rho_m)^2}{\mathcal{T}_0(\mathcal{M}_\Gamma, \rho_m)^2}= R_{\rho_m}(0).
\end{equation}


From now on we split the proof when $m$ is even or odd.

When $m=2(n-1)$ with $n\geq 3$, by  \eqref{e:F-rel-prim} and \eqref{e:ruelle-reide-tor},
\begin{equation}\label{e:main-proof-0}
\begin{split}
\left( \frac{\mathcal{T}_0(\mathcal{M}_\Gamma, \rho_{2(n-1)})} {\mathcal{T}(\mathcal{M}_\Gamma, \rho_{2(n-1)})} \right)^{4} = &\, \exp\left(2i\pi(\eta(D(\sigma_{2n}))-\eta(D(\sigma_{2(n-1)})))\right)\\
&\qquad\qquad  \, \cdot \exp\left( \frac{2}{\pi}\, \left(2n^2-2n+\frac13\right) \mathrm{Vol}(\mathcal{M}_\Gamma) \right) \cdot F_n^{4}.
\end{split}
\end{equation}
Here
\begin{equation}\label{e:def-F-final}
F_n:=F_n(0) =\prod_{k=n}^\infty R\left(\sigma_{-2k}, k\right)= \prod_{k=n}^\infty \prod_{[\gamma]} \left(1- e^{-k(\ell_\gamma+i\theta_\gamma)}\right).
\end{equation}
Recall that $s=0$ lies in the convergence domain of $F_n(s)$ for $n\geq 3$ (see \eqref{e:def-Fs}).
By  \eqref{e:APS} and \eqref{e:main-proof-0}, 
\begin{equation}\label{e:main-proof-F}
\begin{split}
\left(\frac{\mathcal{T}_0(\mathcal{M}_\Gamma, \rho_{2(n-1)})}{\mathcal{T}(\mathcal{M}_\Gamma, \rho_{2(n-1)})}\right)^{12}=&\, \exp\left( 6\pi i \theta_{2(n-1)} \right)\\
& \, \exp\left( \frac{2}{\pi} \left(6{n^2}-6n+1\right) \left({\mathrm{Vol}(\mathcal{M}_\Gamma)}+i 2\pi^2 \mathrm{CS}(\mathcal{M}_\Gamma)\right)\, \right) \cdot F_n^{12}
\end{split}
\end{equation}
where
\begin{equation}\label{e:cn-eta}
\theta_{2(n-1)}= \eta(D(\sigma_{2n}))-\eta(D(\sigma_{2(n-1)}))-\left(6n^2-6n+{1}\right) \eta(D(\sigma_2)) .
\end{equation}
Let us remark that $\theta_0=0$ with $n=1$ by the definition as expected from \eqref{e:F-rel}. Hence, $\theta_{2(n-1)}$ can be understood as an anomaly
for nonzero $m=2(n-1)$.
This completes the proof of \eqref{e:main-theorem}.

When $m=2n-1$ with $n\geq 2$, by \eqref{e:G-rel-prim} and \eqref{e:ruelle-reide-tor},
\begin{equation}\label{e:main-proof-f5}
\begin{split}
\left(\frac{\mathcal{T}_0(\mathcal{M}_\Gamma, \rho_{2n-1})}{\mathcal{T}(\mathcal{M}_\Gamma, {\rho_{2n-1}})}\right)^{4} = &\, \exp\left(2i\pi(\eta(D(\sigma_{2n+1}))-\eta(D(\sigma_{2n-1})))\right)\\
&\qquad\qquad  \, \exp\left( \frac{2}{\pi}\, \left(2n^2-\frac16\right) \mathrm{Vol}(\mathcal{M}_\Gamma)\right) \cdot G_n^{4}.
\end{split}
\end{equation}
Here
\begin{equation}
G_n:=G_n(0) =\prod_{k=n}^\infty R\left(\sigma_{-(2k+1)}, k+\frac12\right)= \prod_{k=n}^\infty \prod_{[\gamma]} \left(1- e^{-(k+1/2)(\ell_\gamma+i\theta_\gamma)}\right).
\end{equation}
Recall that $s=0$ lies in the convergence domain of $G_n(s)$ for $n\geq 2$.
Rewriting \eqref{e:main-proof-f5} motivated by \eqref{e:G-rel},

\begin{theorem} \label{t:odd-case} For $m=2n-1$ with $n\geq 2$, the following equality holds
\begin{equation}\label{e:main-odd-G}
\begin{split}
\left(\frac{\mathcal{T}_0(\mathcal{M}_\Gamma, \rho_{2n-1})}{\mathcal{T}(\mathcal{M}_\Gamma, {\rho_{2n-1}})}\right)^{4} =&\, \exp\left( 2i\pi  \theta_{2n-1} \right)\\
& \, \cdot \exp\left( \frac{2}{\pi} \left(2{n^2}-\frac16\right) \left({\mathrm{Vol}(\mathcal{M}_\Gamma)}+i 3\pi^2 \eta(D(\sigma_1))\right)\, \right) \cdot G_n^{4}
\end{split}
\end{equation}
where $\theta_{2n-1}:=\eta(D(\sigma_{2n+1}))-\eta(D(\sigma_{2n-1}))-  \left(6n^2-\frac12\right) \eta(D(\sigma_1))$.
\end{theorem}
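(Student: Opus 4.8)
The plan is to derive \eqref{e:main-odd-G} as a purely formal rewriting of the identity \eqref{e:main-proof-f5}, which has already been established from \eqref{e:G-rel-prim} and \eqref{e:ruelle-reide-tor}. The only substantive point is to repackage the real hyperbolic volume on the right of \eqref{e:main-proof-f5} into the spin analogue of the complex volume. Recall that in the even case the relation \eqref{e:APS}, $2\,\mathrm{CS}(\mathcal{M}_\Gamma)=3\,\eta(D(\sigma_2))$, lets one write the complex volume as $\mathrm{Vol}(\mathcal{M}_\Gamma)+i3\pi^2\eta(D(\sigma_2))$. In the present spin setting the natural counterpart---already signalled by \eqref{e:G-rel}, where $\eta(D(\sigma_1))$ accompanies the volume---is obtained by replacing the signature operator $D(\sigma_2)$ by the spin Dirac operator $D(\sigma_1)$, giving the combination $\mathrm{Vol}(\mathcal{M}_\Gamma)+i3\pi^2\eta(D(\sigma_1))$ appearing in \eqref{e:main-odd-G}. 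Thus the task is to extract this combination from the exponential factor of \eqref{e:main-proof-f5}.

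Concretely, I would insert into the exponent of \eqref{e:main-proof-f5} the identically vanishing quantity
\begin{equation*}
0=i6\pi\Big(2n^2-\tfrac16\Big)\eta(D(\sigma_1))-i6\pi\Big(2n^2-\tfrac16\Big)\eta(D(\sigma_1)),
\end{equation*}
and then redistribute the two copies. Using $\frac{2}{\pi}(2n^2-\tfrac16)\cdot i3\pi^2=i6\pi(2n^2-\tfrac16)$, the first copy merges with the volume factor to produce exactly
\begin{equation*}
\exp\!\left(\frac{2}{\pi}\Big(2n^2-\tfrac16\Big)\big(\mathrm{Vol}(\mathcal{M}_\Gamma)+i3\pi^2\eta(D(\sigma_1))\big)\right),
\end{equation*}
which is the desired complex-volume term. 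The second copy is absorbed into the pre-existing phase $\exp\!\big(2\pi i(\eta(D(\sigma_{2n+1}))-\eta(D(\sigma_{2n-1})))\big)$.

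It then remains to check that the leftover phase is precisely $\exp(2\pi i\theta_{2n-1})$ for the stated $\theta_{2n-1}$. Writing $\eta_k:=\eta(D(\sigma_k))$, the residual exponent is $2\pi i(\eta_{2n+1}-\eta_{2n-1})-i6\pi(2n^2-\tfrac16)\eta_1$; dividing by $2\pi i$ and using the arithmetic identity $3(2n^2-\tfrac16)=6n^2-\tfrac12$ yields $\theta_{2n-1}=\eta_{2n+1}-\eta_{2n-1}-(6n^2-\tfrac12)\eta_1$, matching the definition in the statement. There is no genuine analytic obstacle here, since \eqref{e:main-proof-f5} does all the work; the content is the correct identification of the spin complex volume and careful bookkeeping of the coefficients. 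The hypothesis $n\geq2$ enters only through \eqref{e:def-Gs}, which guarantees that $s=0$ lies in the convergence domain of $G_n(s)$, so that $G_n=G_n(0)$ is the genuine value of the infinite product. I note in passing that, unlike the even case \eqref{e:main-theorem}, no extra power beyond the fourth is needed, because here the identity is stated exactly in terms of $\eta(D(\sigma_1))$ rather than modulo $i\pi^2\mathbb{Z}$.
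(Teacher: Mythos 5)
Your proposal is correct and follows the paper's own route exactly: the paper likewise obtains \eqref{e:main-proof-f5} from \eqref{e:G-rel-prim} and \eqref{e:ruelle-reide-tor} and then states Theorem \ref{t:odd-case} as a rewriting of \eqref{e:main-proof-f5} ``motivated by \eqref{e:G-rel}'', which is precisely your add-and-subtract of $i6\pi(2n^2-\tfrac16)\eta(D(\sigma_1))$ together with the identity $3(2n^2-\tfrac16)=6n^2-\tfrac12$. Your bookkeeping of the coefficients and the role of $n\geq 2$ for the convergence of $G_n(0)$ both match the paper.
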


Let us remark that the equality \eqref{e:main-odd-G} is compatible with \eqref{e:G-rel} if we put $n=0$ formally and note $\eta(D(\sigma_1))=-\eta(D(\sigma_{-1}))$ (see Remark \ref{r:FG}).


\bibliographystyle{plain}

\end{document}